\documentclass[fontsize=11pt,english,headinclude,bibliography=totoc]{amsart} 
\usepackage{ucs}
\usepackage{ae}
\usepackage{microtype}
\usepackage[utf8]{inputenc}
\usepackage[T1]{fontenc}
\usepackage[english]{babel}

\title[The Abel--Jacobi map over the twistor-$\P^1$]{The Abel--Jacobi map over the twistor-$\P^1$ \\ and real local class field theory}
\author{Saverio Caleca and Maximilian Hauck}
\date{}

\usepackage{amsmath}
\usepackage{amssymb}
\usepackage{amstext}
\usepackage{amsthm}

\usepackage{enumitem}

\usepackage{scrlayer-scrpage}
\usepackage{geometry}
\usepackage{xcolor}

\usepackage{mathtools}

\usepackage{mathscinet}
\usepackage[numbers]{natbib}

\usepackage{graphicx}
\usepackage{tikz-cd}
\usepackage{caption}

\usepackage{tabularx}

\usepackage{hyperref}
\usepackage[capitalize]{cleveref}

\newif\ifcomment

\newlength{\currentparindent}
\makeatletter
\newcommand{\@minipagerestore}{\setlength{\parindent}{\currentparindent}}
\makeatother

\newcommand{\nospacepunct}[1]{\makebox[0pt][l]{#1}}

\usepackage{relsize}
\usepackage[bbgreekl]{mathbbol}
\usepackage{amsfonts}

\usepackage{accents}

\DeclareSymbolFontAlphabet{\mathbb}{AMSb}
\DeclareSymbolFontAlphabet{\mathbbl}{bbold}

\DeclareMathOperator{\id}{id}

\DeclareMathOperator{\Hom}{Hom}

\DeclareMathOperator{\Spec}{Spec}

\DeclareMathOperator{\Pic}{Pic}

\DeclareMathOperator{\Perf}{Perf}

\DeclareMathOperator{\Map}{Map}

\DeclareMathOperator{\Sym}{Sym}

\DeclareMathOperator{\Nil}{Nil}
\DeclareMathOperator{\AnSpec}{AnSpec}

\DeclareMathOperator{\Cont}{Cont}

\DeclareMathOperator{\AJ}{AJ}
\DeclareMathOperator{\Proj}{Proj}

\let\Map\map

\newcommand{\colim}{\operatornamewithlimits{colim}}
\newcommand{\Lim}{\operatornamewithlimits{lim}}
\let\lim\Lim

\newcommand\A{\mathbb{A}}

\newcommand\G{\mathbb{G}}

\newcommand\Z{\mathbb{Z}}

\newcommand\R{\mathbb{R}}
\newcommand\C{\mathbb{C}}

\let\P\PP

\newcommand\D{\mathcal{D}}

\let\O\cO

\let\L\cL

\newcommand\BC{\mathcal{BC}}

\newcommand\dR{\mathrm{dR}}

\newcommand\HT{\mathrm{HT}}

\newcommand\can{\mathrm{can}}

\newcommand\an{\mathrm{an}}
\newcommand\Div{\mathrm{Div}}

\newcommand\FF{\mathrm{FF}}

\newcommand\la{\mathrm{la}}

\newcommand\Betti{\mathrm{Betti}}

\newcommand\ab{\mathrm{ab}}
\newcommand\gas{\mathrm{gas}}
\newcommand\TotDisc{\mathrm{TotDisc}}
\newcommand\Ani{\mathrm{Ani}}
\newcommand\Set{\mathrm{Set}}
\newcommand\TDStack{\mathrm{TDStack}}

\let\epsilon\varepsilon
\let\phi\varphi
\let\ol\overline
\let\ul\underline
\let\tensor\otimes

\let\cal\mathcal

\newtheorem{thm}{Theorem}[section]
\newtheorem{prop}[thm]{Proposition}
\newtheorem{lem}[thm]{Lemma}
\newtheorem{cor}[thm]{Corollary}

\theoremstyle{definition}
\newtheorem{defi}[thm]{Definition}

\newtheorem{rem}[thm]{Remark}

\newtheorem{exx}[thm]{Example}

\AddToHook{env/prop/begin}{\crefalias{thm}{prop}}
\AddToHook{env/lem/begin}{\crefalias{thm}{lem}}
\AddToHook{env/cor/begin}{\crefalias{thm}{cor}}
\AddToHook{env/defi/begin}{\crefalias{thm}{defi}}
\AddToHook{env/thmdefi/begin}{\crefalias{thm}{thmdefi}}
\AddToHook{env/lemdefi/begin}{\crefalias{thm}{lemdefi}}
\AddToHook{env/conv/begin}{\crefalias{thm}{conv}}
\AddToHook{env/rem/begin}{\crefalias{thm}{rem}}
\AddToHook{env/warn/begin}{\crefalias{thm}{warn}}
\AddToHook{env/exx/begin}{\crefalias{thm}{exx}}

\numberwithin{equation}{section}

\begin{document}

\begin{abstract}
We study the Abel--Jacobi map over the twistor-$\P^1$ in the context of Scholze's geometrisation of the real local Langlands correspondence. In a similar spirit to a result of Fargues over the Fargues--Fontaine curve, we prove that pullback along the Abel--Jacobi map induces an equivalence on Picard groupoids and use this to recover local class field theory for archimedean local fields.
\end{abstract}

\maketitle

\tableofcontents

\section{Introduction}
 
An important step towards the categorical local Langlands conjecture as formulated by Fargues--Scholze in \cite{FarguesScholze} was Fargues' observation that Deligne's  geometric proof of unramified global class field theory, when carried over to the Fargues--Fontaine curve, yields local class field theory. More recently, by adapting techniques introduced by Camargo in \cite{camargo2024analyticrhamstackrigid}, Scholze made significant progress towards a geometrisation of the local Langlands correspondence over archimedean local fields. In particular, in \cite{RealLLC}, he introduced the key definition of families of twistor-$\P^1$s, which are the archimedean analogue of relative Fargues--Fontaine curves.

The purpose of this article is to study the Abel--Jacobi map over Scholze's twistor.
Our main result is the following:

\begin{thm}[\cref{thm:main}]
\label{thm:intro-main}
Let $E$ be an archimedean local field. Pullback along the map $\AJ$:
\begin{equation}
    \AJ :\Div^1_{E}\to \Pic^1_{E}
\end{equation}
induces an equivalence
\begin{equation*}
    \Map(\Pic^1_E, */\G_m^\an)\xrightarrow{\cong} \Map(\Div^1_E, */\G_m^\an)
\end{equation*}
of mapping anima. In particular, any line bundle on $\Div^1_E$ is pulled back from $\Pic^1_E$.
\end{thm}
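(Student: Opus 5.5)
The strategy follows Fargues' argument over the Fargues--Fontaine curve, adapted to the twistor-$\P^1$. The first step is to compute $\Div^1_E$ and $\Pic^1_E$ explicitly as analytic stacks, using Scholze's description of families of twistor-$\P^1$s.

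I expect $\Div^1_E$ to be the stack of degree-one Cartier divisors on the twistor line. For $E=\C$ this should be a point quotient of the form $*/W_\C$-like data, and for $E=\R$ it should be $*/W_\R$. Concretely, degree-one divisors are classified by ``untilts'', which over the archimedean place are parametrised by the Weil group $W_E$, so $\Div^1_E \simeq */\underline{W_E}$ in the appropriate analytic sense. On the other side, degree-one line bundles on the twistor-$\P^1$ should be rigid up to automorphisms given by $E^\times$ (global sections of $\O^\times$), so $\Pic^1_E \simeq */\underline{E^\times}$. Under these identifications, $\AJ$ becomes $*/W_E\to */E^\times$, induced by a group map $W_E\to E^\times$. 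I would identify this map by computing the line bundle $\O(x)$ together with the action of the automorphisms of the divisor $x$ on it.

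With these identifications, the mapping anima $\Map(*/G,*/\G_m^\an)$ for a locally compact group $G$ becomes the anima of continuous characters $G\to\C^\times$ (or of the analytic $\G_m$ points over the base), together with the automorphisms $\G_m^\an(*)$ acting trivially via conjugation. The claim therefore reduces to two assertions. First, $\AJ$ induces an isomorphism on continuous characters, $\Hom(E^\times,\C^\times)\to\Hom(W_E,\C^\times)$, which holds if $W_E\to E^\times$ identifies $E^\times$ with $W_E^{\ab}$ as topological groups. Second, the higher homotopy agrees: $\pi_1$ is $\C^\times$ on both sides, and $\pi_0$ consists of characters. This also uses the fact that the pullback of a character $E^\times\to\C^\times$ factors through the abelianisation. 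For $E=\C$ we have $W_\C=\C^\times$ and the map is the identity. For $E=\R$, $W_\R=\C^\times\sqcup j\C^\times$ with $j^2=-1$; its abelianisation is $\R^\times$ via $z\mapsto z\bar z$ and $j\mapsto -1$, and one must check that $\AJ$ is this map.

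The main obstacle is the first step: rigorously establishing $\Div^1_E\simeq */W_E$ and $\Pic^1_E\simeq */E^\times$ inside analytic stacks, and computing $\AJ$ on automorphism groups. For $\Pic^1$, I would classify line bundles on families of twistor-$\P^1$ by reducing to the $\G_m$-equivariant description of Scholze's twistor line as a quotient of an analytic $\A^2\setminus 0$ (or its archimedean analogue). I would then show that a degree-one bundle is fibrewise isomorphic to $\O(1)$, with automorphism sheaf $\underline{E^\times}$, using $H^0(\O)=E$ in families and descent for $\G_m$-torsors. Once these are in hand, the remaining check is to match the topologies, namely that continuous maps into $\G_m^\an$ from $\underline{W_E}$ correspond to those from its abelianisation. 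This follows from the classical computation of the abelianised Weil group.
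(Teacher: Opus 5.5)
\textbf{There is a genuine gap.} Your description of $\Pic^1_E$ is fine: it is \cref{prop:pic}, with $E^{\times,\la}$ rather than the Betti group. The problem is the central identification $\Div^1_E\simeq */W_E$, which is false. For $E=\C$ it quietly assumes the theorem.

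By \cref{prop:divd} and \cref{prop:bcspaces}, $\Div^1_\C\cong(\A^{2,\an}\setminus\{0\})/\C^{\times,\la}$ with $\A^{2,\an}\cong\BC(\mathcal{O}(1))$. Under $\Pic^1_\C\cong */\C^{\times,\la}$, the map $\AJ$ just forgets the section, i.e.\ it is induced by $\A^{2,\an}\setminus\{0\}\to *$. This is far from an isomorphism:
\begin{itemize}
\item Isomorphism classes of $\C$-points of $\Div^1_\C$ are the points of $\mathbb{P}^1(\C)$, whereas $\Pic^1_\C$ has only one. Equivalently, $(\Div^1_\C)^{\an,\dR}\cong(\C\mathbb{P}^1)^{\Betti}$ by the proof of \cref{prop:symdivdr}.
\item The fibre $\A^{2,\an}\setminus\{0\}$ has analytic de Rham stack $(\C^2\setminus\{0\})^{\Betti}$, which is not contractible.
\end{itemize}
So when you say that for $E=\C$ ``the map is the identity'', you are assuming that line bundles and their automorphisms descend along a map with this fibre. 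That is the entire content of the theorem.

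The heuristic that degree-one divisors are parametrised by $W_E$ fails even $p$-adically: there $\Div^1=\mathrm{Spd}\,\breve{E}/\phi^{\Z}$, which is not $*/W_E$; only its étale local systems see $W_E$. What is true here is the presentation $\Div^1_E\cong(\A^{2,\an}\setminus\{0\})/W_E^{\la}$. Identifying line bundles on $\Div^1_E$ with characters of $W_E$ is a deep theorem (\cite[Thm.\ VII.0.1]{RealLLC}), not a formal consequence. Finally, you import $W_E^{\ab}\cong E^\times$ classically and leave open that $\AJ$ realises the reciprocity map for $E=\R$. That makes your route circular relative to \cref{cor:intro-cft}, which the paper deduces from this very theorem.

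\textbf{What is missing} is a mechanism to kill the fibre of $\AJ$. The paper does this as follows.
\begin{enumerate}
\item Reduce to $E=\C$ via $\Div^1_\R\cong\Div^1_\C/(\Z/2)$ and $\Pic^1_\R\cong\Pic^1_\C/(\Z/2)$.
\item Given $\mathcal{L}$ on $\Div^1$, use that $\Sym\Div^1$ is the free commutative monoid on $\Div^1$ to get a line bundle on $\Sym^d\Div^1$.
\item Restrict it to $\Div^d_{\HT\leq 1}$, over which $\Sym^d\Div^1\to\Div^d$ is an isomorphism (\cref{prop:symdisohtleq1}); globally it is not (\cref{exx:failure}).
\item Extend uniquely to $\Div^d$ (\cref{prop:mappingspaces}, proved via explicit charts).
\item The resulting $\mathcal{L}^{(d)}$ are compatible under multiplication by $V(x)$ and $V(y)$. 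By \cref{lem:colimcinfty}, $\colim_{x,y}\Div^d\cong(\C^\infty\setminus\{0\})^{\Betti}/\C^{\times,\la}$. By contractibility of $\C^\infty\setminus\{0\}$ and \cref{lem:picbetti}, this colimit has the Picard anima of $*/\C^{\times,\la}$.
\item This gives a section of the pullback $\AJ^*$, and a retraction of $\AJ^*$ coming from a chosen $\C$-point finishes the proof.
\end{enumerate}
This infinite-level Abel--Jacobi map, whose fibre is contractible, is what takes the place of your identification $\Div^1_E\simeq */W_E$, which does not hold.
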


In particular, since isomorphism classes of line bundles on $\Div_E^1$ are naturally in bijection with characters of the Weil group $W_E$ of $E$ by \cite[Thm.\ VII.0.1]{RealLLC} and $\Pic^1_E$ is the classifying stack of $E^{\times, \la}$, we deduce local class field theory for archimedean local fields in the following form:

\begin{cor}
\label{cor:intro-cft}
For any archimedean local field $E$, there is a canonical isomorphism $W_E^\ab\cong E^\times$.
\end{cor}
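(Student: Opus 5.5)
The corollary follows by comparing two descriptions of the set $\pi_0\Map(\Div^1_E,*/\G_m^\an)$ of isomorphism classes of line bundles on $\Div^1_E$, using \cref{thm:intro-main} to pass between them.

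On the one side, by \cite[Thm.\ VII.0.1]{RealLLC} this set is in natural bijection with the continuous characters $W_E\to\C^\times$. Since $\C^\times$ is abelian, these are the same as the characters of $W_E^\ab$.

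On the other side, \cref{thm:intro-main} identifies the set with $\pi_0\Map(\Pic^1_E,*/\G_m^\an)$. Because $\Pic^1_E$ is the classifying stack of $E^{\times,\la}$, a map $*/E^{\times,\la}\to */\G_m^\an$ amounts to a homomorphism $E^{\times,\la}\to\G_m^\an$ of analytic group stacks, and two such maps are isomorphic exactly when the homomorphisms are conjugate. As $\G_m^\an$ is commutative, conjugation does nothing, so this set is the set of analytic characters of $E^{\times,\la}$. I would check that these are precisely the continuous characters $E^\times\to\C^\times$. For $E=\R$ or $\C$ the group $E^\times$ is a real Lie group, continuous characters are automatically real analytic, and the locally analytic structure is designed to record exactly such maps. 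This identification should be the main technical point, and I would prove it by working out explicitly what maps into $\G_m^\an$ look like.

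Composing the two bijections gives
$$\Hom_{\mathrm{cont}}(W_E^\ab,\C^\times)\cong\Hom_{\mathrm{cont}}(E^\times,\C^\times).$$
To get an isomorphism of groups and not just of character sets, I would first check that the bijection respects multiplication. Tensor products of line bundles correspond to products of characters on both sides, and $\AJ^*$ is symmetric monoidal, so the bijection is an isomorphism of character groups.

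Both $E^\times$ and $W_E^\ab$ are locally compact abelian groups. Indeed, $W_\C^\ab=\C^\times$, and $W_\R^\ab\cong\R^\times$ by the classical computation. Since these groups are moreover Lie groups with finitely many components, their continuous $\C^\times$-valued characters separate points, and quasi-characters recover the group, for instance via Pontryagin duality applied to the unitary characters.

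It remains to see that the isomorphism comes from a group map rather than being an abstract one. The natural choice is to construct $E^\times\to W_E^\ab$ directly. A point of $E^\times$ acts on $\Pic^1_E$ by translation, and pulling back via $\AJ$ should correspond to twisting by the associated Weil group element. Naturality of the correspondence in \cite{RealLLC} then gives a continuous homomorphism inducing the isomorphism on characters, hence an isomorphism $W_E^\ab\cong E^\times$ by duality. I expect this last identification, making the duality map concrete so that canonicity holds, to be the main obstacle. I would first try the case $E=\C$, where both sides are visibly $\C^\times$, and then deduce the case $E=\R$ via Galois descent along $\Div^1_\C\to\Div^1_\R$.
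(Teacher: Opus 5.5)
Up to the final step your route is the paper's: line bundles on $\Div^1_E$ correspond to characters of $W_E$, and line bundles on $\Pic^1_E\cong */E^{\times,\la}$ correspond to continuous characters of $E^\times$. The paper gets the second identification from \cref{Findimrep}; your direct argument via homomorphisms $E^{\times,\la}\to\G_m^{\an}$ and automatic analyticity also works. Then \cref{thm:intro-main} and Pontryagin duality finish.

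The gap is the step you flag yourself. Composing the two bijections only gives an isomorphism of abstract groups $\Hom_{\mathrm{cont}}(W_E^{\ab},\C^\times)\cong\Hom_{\mathrm{cont}}(E^\times,\C^\times)$, and such an isomorphism does not determine the group. For instance, $\Hom_{\mathrm{cont}}(\R^\times,\C^\times)\cong\C\times\Z/2$ is abstractly isomorphic to $\C^2\times\Z/2\cong\Hom_{\mathrm{cont}}(\R_{>0}^2\times\Z/2,\C^\times)$. Duality only helps once the bijection is induced by a continuous homomorphism, and your proposed map $E^\times\to W_E^{\ab}$ is never actually constructed. Elements of $E^\times$ are not points of $\Pic^1_E$ but automorphisms of its unique point, so there is no translation action. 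Nor do they lift to automorphisms of points $(\mathcal{L},s)$ of $\Div^1_E$, since such automorphisms must fix $s$. Separately, invoking the classical $W_\R^{\ab}\cong\R^\times$ is circular, and it is unnecessary: $W_E/\overline{[W_E,W_E]}$ is automatically a locally compact abelian Lie group.

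The missing idea is that the homomorphism goes the other way and is already built into the geometry. Use the presentation $\Div^1_E\cong(\A^{2,\an}\setminus\{0\})/W_E^{\la}$ from \cite[Cor.\ VI.4.5]{RealLLC} together with $\Pic^1_E\cong */E^{\times,\la}$. Under these, $\AJ$ factors as $\Div^1_E\to */W_E^{\la}\to */E^{\times,\la}$, where the second arrow is induced by the natural surjection $N\colon W_E\to E^\times$. For $E=\C$ this just says that $\AJ$ is the projection $(\BC(\mathcal{O}(1))\setminus\{0\})/\C^{\times,\la}\to */\C^{\times,\la}$, with $\BC(\mathcal{O}(1))\cong\A^{2,\an}$.

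Consequently $\AJ^*$ acts on characters by $\psi\mapsto\psi\circ N$. Combined with the fact from \cite[Thm.\ VII.0.1]{RealLLC} that a character of $W_E$ is determined by its line bundle on $\Div^1_E$, \cref{thm:intro-main} says that every continuous character of $W_E$ factors through $N$. Since characters separate points of $W_E^{\ab}$, this forces $\ker N=\overline{[W_E,W_E]}$. Hence $N$ induces a continuous bijective homomorphism $W_E^{\ab}\to E^\times$ of $\sigma$-compact locally compact groups, which is a topological isomorphism by the open mapping theorem.

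This is exactly the reduction behind the paper's sentence that it suffices to show every continuous character of $W_E$ factors through $E^\times$. It treats $\R$ and $\C$ uniformly, so no separate descent from $\C$ to $\R$ is needed.
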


We note that \cref{thm:intro-main} is an archimedean analogue of Fargues' result from \cite{Fargues} that rank one $\ell$-adic étale local systems on the stack $\Div^{1, \FF}$ of degree $1$ divisors on the Fargues--Fontaine curve are pulled back along the Abel--Jacobi map. His proof strategy is the following: Given a rank one local system $\mathcal{L}$ on $\Div^{1, \FF}$, its $d$-th symmetric power $\mathcal{L}^{\boxtimes{d}}$ yields a line bundle on $\Sym^d\Div^{1, \FF}\cong \Div^{d, \FF}$. Since the fibres of the $d$-th Abel--Jacobi map $\AJ^{d, \FF}: \Div^{d, \FF}\rightarrow\Pic^{d, \FF}$ are simply connected for $d\geq 2$, one can then descend $\mathcal{L}^{\boxtimes d}$ to $\Pic^{d, \FF}\cong\Pic^{1, \FF}$.

In our context, there are two reasons this strategy fails.
The first of these is that $\Div^d$ is not isomorphic to the $d$-th symmetric power $\Sym^d\Div^1$ of $\Div^1$, see \cref{exx:failure}.
The second is that, since we are working with quasi-coherent instead of $\ell$-adic sheaves, to obtain descent of line bundles along the Abel--Jacobi map, we would need the fibres to be ``contractible'' instead of just simply connected; however, the map $\AJ^d$ is only ``$(d-1)$-connective''. To address the first issue, we use the following simple idea motivated from algebraic geometry: For any $d$, we single out an open locus $\Div^d_{\HT\leq 1}$ in $\Div^d$ whose complement has ``codimension two'' and over which the natural map $\Sym^d \Div^1 \to \Div^d$ is an isomorphism; then we show that vector bundles extend uniquely from $\Div^d_{\HT\leq 1}$ to all of $\Div^d$. Meanwhile, the solution to the second issue is to pass to an Abel--Jacobi map ``at infinite level'': we assemble all the degree $d$ Abel--Jacobi maps into a single one, which will then have contractible fibers.

We caution the reader, however, that this discussion should only be regarded as an intuition: Indeed, since we are working with analytic stacks throughout the paper, we have no robust notion of dimension, fundamental group or codimension and hence none of the arguments sketched above are purely formal.

We conclude with a brief overview of the paper. In Section 2, we briefly recall some necessary background from \cite{RealLLC}. Afterwards, we identify finite-dimensional representations of a real Lie group $G$ with vector bundles on the classifying stack $*/G^\la$ in Section 3. Subsequently, in Section 4, we discuss the failure of $\Div^d$ to coincide with $\Sym^d \Div^1$, introduce the locus $\Div^d_{\HT\leq 1}$ and prove unique extension of vector bundles from $\Div^d_{\HT\leq 1}$ to $\Div^d$. In Section 5, we finally put all of this together in order to deduce \cref{thm:main} by passing to an Abel--Jacobi map ``at infinite level'' and descending from the case $E=\C$.

\bigskip

\noindent\textbf{Notation and conventions.}
 We work in the light setting. We will always work with the gaseous analytic ring structure; in particular, the $!$-topology coincides with the descendable topology in our setup. All our analytic stacks are over $\mathbb{C}_{\text{gas}}$.

\bigskip

\noindent\textbf{Acknowledgements.} We are grateful to Peter Scholze and Juan Esteban Rodríguez Camargo for several helpful discussions during the development of this project. Moreover, we thank Ferdinand Wagner for suggesting the proof of \cref{lem:symdcompacthausdorff}. The first author was part of the DFG RTG 2553. This work was carried out while the second author was a PhD student at the Max Planck Institute for Mathematics in Bonn and he would like to thank the institute for its hospitality. We also thank the Arizona Department of Transportation for freeing us from a snowstorm, during which this collaboration started.

\section{Recollections from \cite{RealLLC}}

We quickly recall the format of Scholze's geometric approach to the real local Langlands correspondence in \cite{RealLLC} and some of the basic statements that will be relevant later. The starting point is to replace the Fargues--Fontaine curve, which plays a central role in the Fargues--Scholze approach to the $\ell$-adic local Langlands correspondence from \cite{FarguesScholze}, by the following object: 
\begin{defi}
The \emph{twistor}-$\P^1$ is the projective $\R$-scheme
\begin{equation*}
    X_\R\coloneqq\widetilde{\P}^1_\R\coloneqq \Proj\R[x, y, z]/(x^2+y^2+z^2)\;.
\end{equation*}
Equivalently, it can be obtained by descending $\P^1_\C$ from $\C$ to $\R$ via the involution $z\mapsto -\ol{z}^{-1}$.
\end{defi}

The points $0$ and $\infty$ of $\P^1_\C$ descend to a distinguished $\C$-valued point of $X_\R$ that will be denoted by $\infty$. Moreover, from the presentation of $X_\R$ as $\Proj$ of a graded ring above, we obtain line bundles $\O(n)$ on $X_\R$ for each $n\in\Z$; beware that the pullback of $\O_{X_\R}(n)$ to $\P^1_\C$ coincides with $\O_{\P^1_\C}(2n)$, not $\O_{\P^1_\C}(n)$! To get the geometrisation machine running, one has to make this definition in families. These families will be parametrised by the following algebras (we refer the reader to \cite[§V.2]{RealLLC} for the definitions of bounded $\C$-algebras and their $\dagger$-reductions):

\begin{defi}
A bounded gaseous animated $\C$-algebra $A$ is called \emph{totally disconnected} if for each $s\in\pi_0\Spec A(*)$, the algebra
\begin{equation*}
    A_s\coloneqq \colim_{U\ni s} A(U)
\end{equation*}
satisfies $A_s^{\dagger\text{-red}}\cong\C$, where the colimit runs over all clopen neighbourhoods $U$ of $s$.
\end{defi}

For any such totally disconnected $\C$-algebra $A$, one gets a profinite set $S=\pi_0\Spec A(*)=\Hom(A, \C)$ and a map $A\rightarrow \Cont(S, \C)$ with kernel $\Nil^\dagger(A)$.

\begin{defi}
A totally disconnected $\C$-algebra $A$ is called \emph{strongly totally disconnected} if the map $A\rightarrow \Cont(S, \C)$ is surjective or, equivalently, $A^{\dagger\text{-red}}=\Cont(S, \C)$. We will often write $\ol{A}\coloneqq A^{\dagger\text{-red}}$.
\end{defi}

In the following, we will always restrict to countably presented totally disconnected algebras and hence $S$ will always be light. Then the most important fact about strongly totally disconnected $\C$-algebras is that they cover all (countably presented) totally disconnected $\C$-algebras in the descendable topology, see \cite[Prop.\ V.2.8, Rem.\ V.3.2]{RealLLC}.

\begin{defi} 
\label{def:pushpres}
For a totally disconnected $\C$-algebra $A$, the family $X_{\R, A}$ of twistor-$\P^1$'s over $A$ is defined as the pushout
\begin{equation*}
\begin{tikzcd}
    \AnSpec\Cont(S, \C)\ar[r, "\infty"]\ar[d] & X_\R\times_{\AnSpec\R} \AnSpec\Cont(S, \R)\ar[d] \\
    \AnSpec A\ar[r] & X_{\R, A}\nospacepunct{\;,}
\end{tikzcd}
\end{equation*}
where $S=\Hom(A, \C)$ and the top map is the base change of $\AnSpec\C\xrightarrow{\infty} X_\R$ to $\Cont(S, \R)$ over $\R$.
\end{defi}

Note that any line bundle $\O_{X_\R}(n)$ gives rise to a line bundle on $X_{\R, A}$ by gluing the pullback of $\O_{X_\R}(n)$ to $X_\R\times_{\AnSpec\R} \AnSpec\Cont(S, \R)$ with the structure sheaf on $\AnSpec A$ along $\AnSpec\Cont(S, \C)$. By abuse of notation, we will denote this line bundle also by $\O_{X_\R}(n)$.

Since we also want to treat the case $E=\C$ below, let us emphasise that there is a similar story for $\C$ in place of $\R$ and this can be obtained by just base changing the above constructions from $\R$ to $\C$. In particular, we have
\begin{equation*}
    X_\C\coloneqq X_\R\times_{\AnSpec\R}\AnSpec \C\cong \P^1_\C
\end{equation*}
and, for any totally disconnected $\C$-algebra $A$, the family $X_{\C, A}\coloneqq X_{\R, A}\times_{\AnSpec \R}\AnSpec\C$ can be described as the pushout
\begin{equation*}
\begin{tikzcd}
    \AnSpec\Cont(S, \C)\sqcup\AnSpec\Cont(S, \C)\ar[r, "0\sqcup\infty"]\ar[d] & \P^1_\C\times_{\AnSpec\C} \AnSpec\Cont(S, \C)\ar[d] \\
    \AnSpec(A\tensor_{\C, z\mapsto \ol{z}} \C)\sqcup\AnSpec A\ar[r] & X_{\C, A}\nospacepunct{\;.}
\end{tikzcd}
\end{equation*}

As in the case $E=\R$, any line bundle $\O_{\P^1_\C}(n)$ gives rise to a corresponding line bundle on $X_{\C, A}$. In the following, we will always use the descendable topology on the category $\TotDisc$ of (countably presented) totally disconnected $\C$-algebras and denote the category of stacks on this site by $\TDStack$, see \cite[Def.\ V.3.1]{RealLLC} for a precise definition. Note that any object of $\TDStack$ will give rise to an analytic stack over $\C$ by left Kan extension.

\begin{prop}
\label{prop:pic}
The stack $\Pic_E$ on $\TotDisc$ sending any totally disconnected $\C$-algebra $A$ to the groupoid of line bundles on $X_{E, A}$ is given by 
\begin{equation*}
    \Pic_E\cong \bigsqcup_{n\in\Z} */E^{\times, \la}
\end{equation*}
and this isomorphism is induced by the line bundles $\O_{X_E}(n)$. In particular, there is a well-defined degree map
\begin{equation*}
    \deg: \Pic_E\rightarrow\Z
\end{equation*}
sending $\O_{X_E}(n)$ to $n$ and we denote the preimage of $d\in\Z$ under this map by $\Pic_E^d$.
\end{prop}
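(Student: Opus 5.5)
The plan is to compute line bundles on $X_{E,A}$ one strongly totally disconnected algebra at a time, then assemble the answer by descent. Since strongly totally disconnected $A$ cover all of $\TotDisc$ in the descendable topology, and both sides are stacks, it suffices to construct a natural map
\[
\bigsqcup_{n\in\Z} */E^{\times,\la}\to \Pic_E
\]
and check that it is an equivalence on such $A$. The map sends $(n,\text{torsor})$ to $\O_{X_E}(n)$ twisted by the torsor. For this we need $E^{\times,\la}$ to act on $\O_{X_E}(n)$, which we take from its action on $\O_{X_E}(1)$.

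\emph{Reduction to the nilreduced family.} First we reduce from $A$ to $\ol A=\Cont(S,\C)$.

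1. By \cref{def:pushpres}, $X_{E,A}$ is a pushout. Hence $\Pic(X_{E,A})$ is the fibre product of the groupoids of line bundles on $\AnSpec A$ and on $X_E\times\AnSpec\Cont(S,E\cap\R)$ over $\Pic(\AnSpec\Cont(S,\C))$. This uses that vector bundles satisfy descent along such pushouts, which I take from \cite{RealLLC}.
2. For strongly totally disconnected $A$, the map $A\to\Cont(S,\C)$ is surjective with $\dagger$-nilpotent kernel. Using $\dagger$-nilpotence, line bundles on $A$ should be equivalent to line bundles on $\Cont(S,\C)$, and all of them are trivial locally on $S$.
3. The automorphism groups differ: they are $A^\times$ versus $\Cont(S,\C)^\times$. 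Under gluing, this difference is exactly what produces the ``la'' structure on $E^\times$.

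\emph{The profinite family.} Next we treat the family $X_E\times S$. By a spreading-out argument, any line bundle on $X_E\times \AnSpec\Cont(S,\R)$ is locally constant in $S$. It therefore has a locally constant degree, and over each clopen piece it is $\O(n)$ twisted by a line bundle on $\Cont(S,\R)$, hence trivial locally. The automorphisms are $\Gamma(X_E,\O)^\times$ with continuous dependence on $S$, namely $\Cont(S,\R^\times)$. Gluing with the data on $\AnSpec A$ along $\infty$, the automorphisms of the glued object form the fibre product
\[
\Cont(S,\R^\times)\times_{\Cont(S,\C^\times)}A^\times .
\]
For $E=\C$ we get two copies of $\Cont(S,\C^\times)$ glued with $A^\times$ and its conjugate. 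One then checks that this agrees with $\Map(\AnSpec A,E^{\times,\la})$, using the definition of the locally analytic group as $\Cont(S,E^\times)$ thickened by germs along the $\dagger$-nilradical. The isomorphism classes on each degree component form a single class locally on $S$.

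\emph{Main obstacle.} The hardest step is this last identification: showing that the automorphism group of $\O(n)$ over $X_{E,A}$, i.e.\ $H^0(X_{E,A},\O^\times)$, is exactly $E^{\times,\la}(A)$. It also requires the vanishing of $H^1$-type obstructions (triviality of line bundles) in the nilpotent directions. I would first try to compute $H^0(X_{E,A},\O)$ via the pushout, expecting $\Cont(S,E)\times_{\Cont(S,\C)}A$. I would then take units, appealing to the $\dagger$-nilpotence of the kernel to show that units lift.
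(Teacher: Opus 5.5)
The paper gives no argument of its own here: its proof is a bare citation of \cite[Thm.\ VI.2.4, Prop.\ VI.4.1]{RealLLC}. You instead reconstruct the rank-one case directly from the pushout of \cref{def:pushpres}, and that skeleton is sound. $\D$ turns the pushout into a pullback, so a line bundle is a triple $(\mathcal{L}_A,\mathcal{L}_X,\phi)$. Your ``spreading out'' for $\mathcal{L}_X$ is really \cref{BasicBetti} plus \cref{lem:picbetti}, which give $\mathcal{L}_X\cong\mathcal{O}(n)$ locally on $S$. Since $1+\Nil^\dagger(A)\subseteq A^\times$ (because $1/(1+t)\in\C\{t\}^\dagger$), the map $A^\times\to\Cont(S,\C^\times)$ is surjective. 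So the double coset $A^\times\backslash\Cont(S,\C^\times)/\Cont(S,\R^\times)$ of gluing data is trivial, and every degree-$n$ line bundle is locally $\mathcal{O}(n)$. Compared with the citation, this buys transparency: it needs only rank one, with no classification of higher-rank bundles.

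The step you call the main obstacle is in fact immediate from \cref{lapullback}. Evaluating that cartesian square on $A$ gives $\R^{\times,\la}(A)=A^\times\times_{\Cont(S,\C^\times)}\Cont(S,\R^\times)$. This is the unit group of $\Gamma(X_{\R,A},\mathcal{O})=A\times_{\Cont(S,\C)}\Cont(S,\R)$, since units of a fibre product of rings are the fibre product of the unit groups.

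Two corrections are needed. First, for $E=\C$ there are two gluing points, so $\Gamma(X_{\C,A},\mathcal{O})$ is the fibre product over $\Cont(S,\C)$ of $A$ and $A\otimes_{\C,z\mapsto\ol{z}}\C$, not $\Cont(S,E)\times_{\Cont(S,\C)}A=A$. This matches \cref{lapullback} for $\mathrm{Res}_{\C/\R}\G_m$ and the kernel $\G_m^{\dagger,2}$ in \cref{exact sequences}. Second, your step 2 overstates: $\Pic(A)\to\Pic(\ol{A})$ is not an equivalence, as your step 3 itself notes. What you need is only that $\mathcal{L}_A$ becomes trivial after a cover in $\TotDisc$. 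This should be justified rather than asserted ``using $\dagger$-nilpotence'', either from descendable-local triviality of line bundles plus refining the cover by strongly totally disconnected algebras, or by a Nakayama-type lift of a generator.
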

\begin{proof}
See \cite[Thm.\ VI.2.4, Prop.\ VI.4.1]{RealLLC}.
\end{proof}

\begin{defi}
For each $n\in\Z$, the \emph{Banach--Colmez space} of $\O_{X_E}(n)$ is the stack $\BC_E(\O(n))$ on $\TotDisc$ sending any totally disconnected $\C$-algebra $A$ to the animated abelian group $\Gamma(X_{E, A}, \O_{X_E}(n))$.
\end{defi}

\begin{defi}
The stack $\Div^1_E$ of \emph{degree $1$ divisors} on $X_E$ is defined by sending any $A\in\TotDisc$ to the anima of pairs $(\L, s)$ of a degree $1$ line bundle $\L$ on $X_{E, A}$ and a global section $s$ of $\L$ which is nonzero after pullback along every map $A\rightarrow\C$.
\end{defi}

\begin{prop}
There is an isomorphism
\begin{equation*}
    \Div_E^1\cong (\BC_E(\O(1))\setminus \{0\})/E^{\times, \la}\;.
\end{equation*}
\end{prop}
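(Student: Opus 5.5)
We prove the isomorphism $\Div_E^1\cong (\BC_E(\O(1))\setminus \{0\})/E^{\times, \la}$ by comparing the moduli problem defining $\Div^1_E$ with the description of $\Pic^1_E$ in \cref{prop:pic}. There is a forgetful map $\Div^1_E\to\Pic^1_E$, $(\L,s)\mapsto\L$. By \cref{prop:pic}, $\Pic^1_E\cong */E^{\times,\la}$, and the isomorphism is induced by $\O_{X_E}(1)$. So $\Div^1_E$ should be the quotient by $E^{\times,\la}$ of the fibre of this map over the point $*\to\Pic^1_E$ classifying $\O_{X_E}(1)$.

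The plan is the following.
\begin{enumerate}[label=(\roman*)]
\item Form the cartesian square with $*\to */E^{\times,\la}$. This identifies $\Div^1_E$ with $F/E^{\times,\la}$, where $F$ is the fibre. Concretely, $F(A)$ is the anima of triples $(\L,s,\alpha)$ with $\alpha:\L\cong\O_{X_E}(1)$ on $X_{E,A}$. Both sides are stacks for the descendable topology, so it suffices to compute $F$ as a stack on $\TotDisc$. Here we use that $\Pic^1_E$ is the stack-theoretic quotient, so the fibre over the trivial torsor is an $E^{\times,\la}$-torsor over $\Div^1_E$.
\item Transporting $s$ along $\alpha$ identifies $F(A)$ with the subanimum of $\Gamma(X_{E,A},\O_{X_E}(1))=\BC_E(\O(1))(A)$ cut out by the nonvanishing condition. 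The group $E^{\times,\la}$ acts on this space through the automorphisms of $\O_{X_E}(1)$, and this is exactly the action used in the quotient, because changing $\alpha$ by an automorphism rescales $s$.
\item Show that the nonvanishing condition, namely that $s$ is nonzero after every pullback $A\to\C$, defines the open substack $\BC_E(\O(1))\setminus\{0\}$. This complement is understood as the locus where the section is pointwise nonzero, that is, as a condition on the points $S=\Hom(A,\C)$. It is $E^{\times,\la}$-stable, so the quotient makes sense.
\end{enumerate}

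The main subtlety I expect lies in step (i): knowing precisely that $\mathrm{Aut}(\O_{X_E}(1))$, as a stack on $\TotDisc$, is $E^{\times,\la}$, and that the action on global sections is the evident one. Both should be available from the proof of \cref{prop:pic}, i.e.\ \cite[Thm.\ VI.2.4, Prop.\ VI.4.1]{RealLLC}.

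Step (iii) also needs care. The points of the fibre are pairs $(A\to\C)$ together with sections over $X_{E,\C}$, so I must check that "nonzero at every geometric point" agrees with the definition of the open complement of the zero section used in \cite{RealLLC}. I would reduce this to strongly totally disconnected $A$, where the points are simply $S$.
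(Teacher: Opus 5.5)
Your proposal is correct and is essentially the paper's argument: the paper just cites \cite[Prop.\ VI.3.2]{RealLLC}, and in the degree $d$ analogue (\cref{prop:divd}) it argues the way you do, namely that the line bundle $\mathcal{L}$ is locally isomorphic to $\mathcal{O}(1)$ with automorphisms $E^{\times,\la}$ (\cref{prop:pic}), and that transporting $s$ along such an isomorphism gives a fibrewise nonvanishing section of $\mathcal{O}(1)$. Your fibre of $\Div^1_E\to\Pic^1_E\cong */E^{\times,\la}$ over the point classifying $\mathcal{O}_{X_E}(1)$ is a clean way to package this, and step (iii) is essentially definitional.
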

\begin{proof}
See \cite[Prop.\ VI.3.2]{RealLLC}.
\end{proof}

Next, we need to recall the construction of Betti stacks associated to locally compact Hausdorff spaces $S$. This is constructed in two steps: First, there is a functor 
$$ \text{ProFin}^{\text{light}} \to \text{AnStack}_{\mathbb{C}}$$
sending $S$ to $\AnSpec(\Cont(S,\mathbb{C)})$, where we use the gaseous analytic ring structure. Since this sends hypercovers to $!$-hypercovers, there is a unique colimit-preserving extension to a functor
\begin{equation*}
\begin{split}
    \text{Cond}\Ani^{\text{light}}&\to \text{AnStack}_{\mathbb{C}} \\
    S&\mapsto S^{\Betti}
\end{split}
\end{equation*}
Restricting to totally disconnected algebras, the analytic stack $S^{\Betti}$ has an easy functor of points description on strongly totally disconnected algebras: Namely, given a totally disconnected algebra $A$ such that $\ol{A}=\Cont(T,\mathbb{C})$, we have
\begin{equation*}
    S^{\Betti}(A)=S^{\Betti}(\ol{A})=\Cont(T,S)\;.
\end{equation*}
The category of sheaves on $S^\Betti$ has the following easy description:

\begin{prop}\label{BasicBetti}
    Let $X$ be an analytic stack and $S$ a locally compact Hausdorff space which is locally of finite cohomological dimension. Then there is a natural isomorphism $$\D(X\times S^{\Betti})\cong \widehat{\D}(S,\D(X))$$
    of symmetric stable presentable monoidal infinity categories, where the right-hand side denotes the left-completion of the category of $\D(X)$-valued sheaves on $S$.
\end{prop}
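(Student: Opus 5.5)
The plan is to reduce first to profinite $S$, then to bootstrap to general $S$ by descent along a hypercover.

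\emph{Profinite case.} For $S$ light profinite, $S^{\Betti}=\AnSpec\Cont(S,\C)$. I would identify $\D(X\times S^{\Betti})$ with modules in $\D(X)$ over the idempotent-like algebra $\Cont(S,\C)$, pulled back to $X$. Writing $S=\lim_n S_n$ with the $S_n$ finite, we have
\begin{equation*}
\Cont(S,\C)=\colim_n \prod_{S_n}\C
\end{equation*}
in the gaseous setting. Modules over $\prod_{S_n}\C$ in $\D(X)$ are exactly $\prod_{S_n}\D(X)$, i.e.\ $\D(X)$-valued sheaves on the finite set $S_n$. Passing to the colimit along the transition maps, I would check that
\begin{equation*}
\D(X\times S^{\Betti})\simeq \lim_n \D(X)^{S_n}
\end{equation*}
in the appropriate sense (using that a colimit of algebras gives a limit of module categories on compacts/dualizable pieces). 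I would then compare this with sheaves on $S$: sheaves on a profinite set are determined by their stalks and compatible sections on clopens, so $\D(S,\D(X))$ is the same colimit/limit. The compatibility with tensor products is clear, since both sides are modules over the same commutative algebra. Left-completion enters here, because sheaf categories on infinite profinite sets need not be left-complete in general, whereas the module side automatically is (for bounded-below objects, or after completing).

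\emph{General $S$.} For general locally compact $S$, I choose a light profinite hypercover $S_\bullet\to S$, e.g.\ built from compact neighbourhoods covered by profinite sets. By definition of $S^{\Betti}$ as the colimit-preserving extension, $X\times S^{\Betti}$ is the colimit of $X\times S_\bullet^{\Betti}$ along $!$-hypercovers. Hence $\D(X\times S^{\Betti})=\lim \D(X\times S_\bullet^{\Betti})$, using $!$-descent for $\D$; since the topology is descendable, the $*$-pullback limit also works. On the other side, $\widehat{\D}(S,\D(X))$ satisfies descent along proper/profinite hypercovers. This is proper base change together with hyperdescent, which holds for left-complete sheaf categories precisely under the finite cohomological dimension hypothesis, since that ensures Postnikov convergence. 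Matching the two limits termwise via the profinite case gives the equivalence, and its symmetric monoidality follows because all transition functors are symmetric monoidal pullbacks.

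\emph{Main obstacle.} I expect the hyperdescent comparison to be the hard step: ensuring that the sheaf side satisfies hyperdescent (rather than only Čech descent) and matches the left-completion on the stack side. I would first try to reduce to compact $S$ of finite cohomological dimension, where the hypercover can be truncated or Postnikov towers converge uniformly.
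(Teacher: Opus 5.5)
Your profinite step uses an identity that is false, and your general step assumes the main point rather than proving it. The paper gives no argument of its own here (it cites \cite[Cor.\ II.1.2]{RealLLC}), so your outline has to stand on its own.

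\textbf{The profinite step.} The equality $\Cont(S,\C)=\colim_n\prod_{S_n}\C$ fails. Gaseous modules are closed under colimits in condensed $\C$-vector spaces, so this colimit is the algebra $C^{\mathrm{lc}}(S,\C)$ of locally constant functions, already on underlying sets. That is a proper dense subalgebra of the Banach algebra $\Cont(S,\C)$. The difference is not cosmetic:
\begin{itemize}
\item In $\D(\Cont(S,\C)_{\gas})$ the ring $\pi_0\mathrm{End}(\mathbf{1})$ is the ring of continuous functions, whatever analytic ring structure you use.
\item In $\D(S,\D(\C_{\gas}))$ it is the ring of locally constant functions.
\item For $S=\mathbb{N}\cup\{\infty\}$ these rings are not isomorphic. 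In the second, every non-zero-divisor is a unit; in the first, $(1/n)_n$ is a non-zero-divisor but not a unit.
\end{itemize}
So if $S^{\Betti}$ means the spectrum of the Banach algebra, no symmetric monoidal equivalence exists. Your computation actually proves the statement for $\AnSpec C^{\mathrm{lc}}(S,\C)=\AnSpec(C(S,\Z)\otimes\C)$. This is the right model for $S^{\Betti}$: maps out of it are compatible systems of idempotents, so it corepresents $A\mapsto\Cont(\pi_0\Spec A(*),S)$. That is exactly the description $S^{\Betti}(A)=\Cont(T,S)$ recalled in the paper. You need to state and use this identification in place of the false equality.

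With that fixed, the profinite case is the formal equivalence $\mathrm{Mod}_{\colim_n A_n}\simeq\colim_n\mathrm{Mod}_{A_n}$ in $\mathrm{Pr}^L$. That colimit is a limit along forgetful functors, not along pullbacks. Also, no left-completion is involved at this stage: profinite sets have cohomological dimension $0$, so your remark that left-completion enters here has it backwards.

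\textbf{The general step.} This is where the substance lies, and you only assert it. You need that $S\mapsto\widehat{\D}(S,\D(X))$ sends a profinite hypercover to a limit diagram, including essential surjectivity of the comparison functor.
\begin{itemize}
\item For a coefficient category like $\D(A)$ with $A$ an analytic ring, this can be done in two stages. First use cohomological descent for bounded-below objects, with proper base change reducing to fibres over points. Then pass to left-completions, using that a limit of left-complete categories along $t$-exact functors is left-complete.
\item For an arbitrary analytic stack $X$, the category $\D(X)$ is neither compactly generated nor equipped with a $t$-structure. So ``Postnikov convergence'', and even ``left-completion'' of $\D(X)$-valued sheaves, need interpretation.
\item You should first reduce to affine $X$ by descent in $X$, as in the proof of \cref{lem:picbetti}. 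Then check that the right-hand side is compatible with $\D(X)=\lim\D(A)$.
\end{itemize}
That last step is the natural place for the hypothesis that $S$ is locally of finite cohomological dimension. It lets you drop the left-completion for nice coefficients, and plain sheaf categories commute with limits of coefficient categories.
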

\begin{proof}
See \cite[Cor.\ II.1.2]{RealLLC}
\end{proof}

Instead of the full derived category of $X\times S^{\Betti}$, we will often be interested in just the invertible objects, i.e.\ the Picard groupoid $\cal{P}\mathrm{ic}(X\times S^{\Betti})$. From \cref{BasicBetti}, we can deduce the following description:

\begin{lem}
\label{lem:picbetti}
Let $S$ be a locally compact Hausdorff space and $\mathcal{C}$ a compactly generated symmetric monoidal presentable $\infty$-category. Then the dualisable (resp.\ invertible) objects in $\D(S, \mathcal{C})$ are exactly the locally constant sheaves in the sense of \cite[A.1.12]{Highalg} with dualisable (resp.\ invertible) stalks.

In particular, if $S$ is any CW complex, we have 
\begin{equation*}
    \cal{P}\mathrm{ic}(X\times S^{\Betti})\cong \Map(|S|,\cal{P}\mathrm{ic}(X))
\end{equation*}
for any analytic stack $X$, where $|S|$ denotes the underlying anima of $S$.
\end{lem}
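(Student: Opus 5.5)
We must show that the dualisable and invertible objects of $\D(S,\mathcal{C})$ are exactly the locally constant sheaves with dualisable (respectively invertible) stalks, and then deduce the description of $\cal{P}\mathrm{ic}(X\times S^{\Betti})$. The plan is to check the statement locally on $S$, using that dualisability and invertibility are local for sheaves of symmetric monoidal categories, and then to use a constant-sheaf argument on small opens together with the stalk functors.

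\textbf{Locally constant sheaves are dualisable.} The assignment $U\mapsto \D(U,\mathcal{C})$ is a sheaf of symmetric monoidal $\infty$-categories on $S$. A dualisable object in a limit of symmetric monoidal categories is the same as a compatible family of dualisable objects, because duals and evaluation/coevaluation maps are unique up to contractible choice. Hence dualisability and invertibility can be checked on an open cover. If $F$ is locally constant, then on a cover $F|_U\cong \pi_U^*M$ with $M$ the stalk. Pullback along $\pi_U:U\to *$ is symmetric monoidal, so it preserves dualisable and invertible objects. So $F$ is dualisable (or invertible) when its stalks are.

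\textbf{Dualisable sheaves are locally constant.} Stalk functors $x^*$ are symmetric monoidal, so the stalks of a dualisable $F$ are dualisable. For local constancy, fix $x\in S$ and the stalk $M=F_x$. Since $F$ is dualisable, for any $G$ we have $\Hom(G,F)\cong\Hom(G\otimes F^\vee,1)$.
\begin{itemize}
\item The identification $M\cong \colim_{U\ni x}F(U)$, via $\operatorname{ev}$ and $\operatorname{coev}$, produces a map $\pi_U^*M\to F|_U$ on some small $U$ and a candidate inverse $F|_U\to\pi_U^*M$.
\item The coevaluation $1\to F\otimes F^\vee$ is a global section. Its germ at $x$ is the coevaluation for $M$, which factors through some $F(U)\otimes F^\vee(U)$ after shrinking $U$.
\item Here compact generation of $\mathcal{C}$ and the finiteness coming from dualisability are used. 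A dualisable $M$ is in particular such that $\Hom(M,-)$ commutes with the relevant filtered colimits, so that $\colim_{U}\Hom(M,F(U))\cong\Hom(M,M)$.
\end{itemize}
The triangle identities hold at the stalk and therefore on a smaller neighbourhood, by the same compactness. Hence $F|_U\cong\pi_U^*M$.

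This is the step I expect to be the main obstacle. Identities that hold at stalks must be spread to neighbourhoods, which requires care with higher coherences, left completeness, and the fact that $S$ is only locally compact Hausdorff. The first thing to try is to reduce to compact neighbourhoods, where global sections of a colimit of sheaves commute with filtered colimits. Alternatively one can compare directly with Lurie's characterisation in \cite[A.1.12]{Highalg}.

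\textbf{The CW case.} For a CW complex $S$, \cref{BasicBetti} identifies $\D(X\times S^{\Betti})$ with $\widehat{\D}(S,\D(X))$. CW complexes are locally contractible and locally of finite cohomological dimension. By the first part and left completeness, invertible objects are the locally constant sheaves with invertible stalks. By \cite[A.1]{Highalg}, locally constant sheaves on a locally contractible space valued in the groupoid $\cal{P}\mathrm{ic}(X)$ are equivalent to functors from the fundamental $\infty$-groupoid, that is, to $\Map(|S|,\cal{P}\mathrm{ic}(X))$. This yields $\cal{P}\mathrm{ic}(X\times S^{\Betti})\cong\Map(|S|,\cal{P}\mathrm{ic}(X))$.
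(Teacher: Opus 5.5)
\textbf{First part: a different route, and it works.} For the first part your route differs from the paper's, and the step you flag as the main obstacle is easier than you fear.

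The paper proceeds in three moves. It reduces to compact Hausdorff $S$, where the unit of $\D(S,\mathcal{C})$ is compact. Hence every dualisable $F$ is compact, since $\Hom(F,-)\cong\Hom(1,F^\vee\otimes -)$. It then quotes \cite[Prop.~6.15]{efimov2025ktheorylocalizinginvariantslarge} that compact sheaves are locally constant.

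Your direct spreading-out argument avoids both that citation and the reduction to compact $S$. It needs $1_{\mathcal{C}}$ to be compact. Compact generation alone does not give this, but the paper uses it tacitly too, and it holds for $\mathcal{C}=\D(A)$. The argument then runs as follows:
\begin{itemize}
\item The stalk $M=F_x$ is compact, so $\mathrm{id}_M$ factors through some $F(U)$. This gives $\pi_U^*M\to F|_U$.
\item The coevaluation of $M$, viewed in $(F^\vee\otimes\pi^*M)_x\cong M^\vee\otimes M$, lifts to $\Gamma(U,F^\vee\otimes\pi^*M)$ by compactness of $1$. This gives $F|_U\to\pi_U^*M$.
\item Being mutually inverse is a $\pi_0$-condition, so no higher coherences enter. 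The two composites live in $\pi_0\Hom(M,\Gamma(U,\pi^*M))$ and $\pi_0\Hom(1,\Gamma(U,F^\vee\otimes F))$. The colimits of these over $U\ni x$ are both $\pi_0\Hom(M,M)$, where the composites are the identity.
\item Hence both composites become homotopic to the identity after shrinking $U$.
\end{itemize}

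\textbf{CW part: a genuine gap.} \cref{BasicBetti} requires $S$ to be locally compact Hausdorff and locally of finite cohomological dimension. Your claim that CW complexes have these properties is false for infinite-dimensional complexes. An example is $\C^\infty\setminus\{0\}$, which is exactly the case the paper needs in the proof of \cref{thm:main}. It is not locally compact, and every nonempty open subset contains closed $n$-discs for all $n$, so it is not locally of finite cohomological dimension.

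The paper first reduces to finite CW complexes. It writes $S$ as the colimit of its finite subcomplexes and observes that both $\cal{P}\mathrm{ic}(X\times(-)^{\Betti})$ and $\Map(|-|,\cal{P}\mathrm{ic}(X))$ turn this colimit into a limit.

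A second, smaller omission: you apply the first part with $\mathcal{C}=\D(X)$. For an arbitrary analytic stack $X$, compact generation (with compact unit) is not available. The paper uses descent in $X$ to reduce to $X=\AnSpec A$, where $\D(A)$ is compactly generated.

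With these two reductions inserted, the rest of your CW argument goes through as in the paper.
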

\begin{proof}
    If $\mathcal{F}$ in $\D(S,\mathcal{C})$ is locally constant with dualisable stalks, then $\mathcal{F}$ is constant over an open cover of $S$ with dualisable spaces of sections. Therefore, the natural map
    \begin{equation*}
        \mathcal{H}\mathrm{om}(\mathcal{F},1)\otimes \mathcal{F}\rightarrow\mathcal{H}\mathrm{om}(\mathcal{F},\mathcal{F})
    \end{equation*}
    is locally an isomorphism, hence globally and this implies dualisability of $\mathcal{F}$. 
    For the converse direction, first observe that taking stalks is symmetric monoidal, hence the stalks of $\mathcal{F}$ are dualisable. To prove local constancy, we can reduce to the case of compact Hausdorff spaces, in which case the unit object of $\D(S, \cal{C})$ is compact. This implies that any dualisable object is compact, hence dualisable sheaves are locally constant by \cite[Prop 6.15]{efimov2025ktheorylocalizinginvariantslarge}.

    For the last assertion, first note that we can reduce to the case of a finite CW complex since both sides of the claimed equivalence send colimits in $S$ to limits. Moreover, by descent in $X$, we can reduce to $X=\AnSpec A$, in which case $\D(X)=\D(A)$ is compactly generated. Now we can apply \cref{BasicBetti} to deduce that there is an equivalence 
    \begin{equation*}
        \cal{P}\mathrm{ic}(\D(X\times S^{\Betti}))\cong\cal{P}\mathrm{ic}(\D(S,\D(X)))
    \end{equation*}
    and, by the first part, the right-hand side is the category of locally constant sheaves on $S$ valued in $\cal{P}\mathrm{ic}(X)$. 
    But the groupoid of $\cal{C}$-valued locally constant sheaves on $S$ is equivalent to $\Map(|S|,\mathcal{C})$ by \cite[Appendix A]{Highalg}. Indeed, in loc.\ cit., the statement is proved for $\mathcal{C}=\Ani$ and then the general case follows by tensoring.
\end{proof}

Using Betti stacks, one can formulate the following version of the Riemann--Hilbert correspondence, which is \cite[Thm.\ II.3.1]{RealLLC}.

\begin{thm}
\label{anriemannhilb}
   Let $X$ be a complex manifold. The map $X\to X^{\Betti}$ factors uniquely over $X^{\an, \dR}\coloneqq X/\Delta(X)^{\dagger}$ and induces an isomorphism
   \begin{equation*}
       X^{\an, \dR}\cong X^{\Betti}\;.
   \end{equation*}
   Here, we use $\Delta(X)^\dagger\subseteq X\times X$ to denote the overconvergent neighbourhood of the diagonal.
\end{thm}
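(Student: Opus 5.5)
The plan is to treat $X\to X^{\Betti}$ as a quotient map. First I compute its \v{C}ech nerve and show it is the overconvergent neighbourhood of the diagonal. Then I show the map is a cover in the $!$-topology, which here coincides with the descendable topology. Together these give $X^{\Betti}\cong X/(X\times_{X^{\Betti}}X)=X/\Delta(X)^\dagger$. Uniqueness of the factorisation is automatic once the factorisation is shown to be an isomorphism: a map out of the quotient $X/\Delta(X)^\dagger$ is determined by its restriction to $X$. Both sides send open covers of $X$ to colimits: $(-)^{\Betti}$ is colimit-preserving, and the de Rham quotient is local. I will therefore freely reduce to $X$ being an open polydisc, or to closed polydiscs together with their overconvergent neighbourhoods.

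\emph{Step 1: the fibre product.} We have
\begin{equation*}
X\times_{X^{\Betti}}X\cong (X\times X)\times_{(X\times X)^{\Betti}}\Delta^{\Betti},
\end{equation*}
since $(-)^{\Betti}$ commutes with finite products and $\Delta\colon X\to X\times X$ is a closed embedding. The key general claim is the following. Let $Z\subseteq T$ be a closed subset of a locally compact Hausdorff space and $Y$ an analytic stack with a map $Y\to T^{\Betti}$, coming from a continuous map $f$ of underlying spaces. Then $Z^{\Betti}\times_{T^{\Betti}}Y$ is the overconvergent neighbourhood of $f^{-1}(Z)$ in $Y$. To prove it, I use \cref{BasicBetti}: $Z^{\Betti}\to T^{\Betti}$ corresponds to the idempotent algebra $i_*\C$ in $\D(T)$. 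This algebra is $\colim_{U\supseteq Z}$ of the functions on open neighbourhoods $U$ of $Z$. Its pullback to $Y$ is the idempotent algebra $\colim_U \O(f^{-1}U)$, which by definition cuts out $f^{-1}(Z)^\dagger$. Applying this with $T=X\times X$ and $Z=\Delta$ yields $\Delta(X)^\dagger$, compatibly with the projections. The factorisation $X/\Delta(X)^\dagger\to X^{\Betti}$ then follows.

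\emph{Step 2: $X\to X^{\Betti}$ is a $!$-cover.} Working locally with a closed polydisc $D$, it suffices to show the following. Let $\pi\colon D^\dagger\to D^{\Betti}$, where $D^\dagger$ is the overconvergent closed polydisc. Then $\pi_*\O$, viewed as an algebra in $\D(D^{\Betti})\cong\widehat{\D}(D,\D(\C))$, is descendable, i.e.\ the unit $\C$ lies in the thick tensor ideal it generates. By Step 1, $\pi_*\O$ is the sheaf of germs of holomorphic functions on closed subsets. My first check is that it is concentrated in degree $0$ on closed polydiscs, using Cartan's Theorem~B type vanishing in the gaseous setting. Then the holomorphic Poincaré lemma gives a finite resolution
\begin{equation*}
\C\xrightarrow{\ \simeq\ }\bigl(\O\to\Omega^1\to\cdots\to\Omega^n\bigr),\qquad n=\dim X.
\end{equation*}
Each $\Omega^i$ is a finite free $\pi_*\O$-module. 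So $\C$ is a finite iterated extension of $\pi_*\O$-modules, which is exactly descendability (of index at most $n+1$). This is also what identifies $X^{\an,\dR}$ with $X^{\Betti}$ at the level of sheaves.

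The main obstacle is Step 2, specifically making the Poincaré lemma and the computation of $\pi_*\O$ rigorous in the gaseous analytic framework. One must check that pushforward along $D^\dagger\to D^{\Betti}$ really computes overconvergent holomorphic functions with no higher cohomology, and that the differentials are module maps in $\D(D^{\Betti})$ so that the finite resolution yields descendability. Step 1 should be comparatively formal given \cref{BasicBetti}. Finally, gluing the local statements needs the fact that both $X^{\Betti}$ and $X/\Delta(X)^\dagger$ are computed as colimits over a cover of $X$ by polydiscs.
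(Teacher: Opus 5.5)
The paper does not prove this statement. It is imported verbatim from \cite[Thm.\ II.3.1]{RealLLC}, so there is no internal argument to compare yours with. Your outline is the natural argument and is correct in substance: compute the \v{C}ech nerve of $X\to X^{\mathrm{Betti}}$ as the overconvergent neighbourhoods of the small diagonals, then show $X\to X^{\mathrm{Betti}}$ is a cover because $\pi_*\mathcal{O}$ is descendable. Descendability holds because, by the holomorphic Poincar\'e lemma, $\C$ is a finite iterated extension of the $\pi_*\mathcal{O}$-modules $\pi_*\Omega^i[-i]$.

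Several justifications need repair, though none is a real gap.

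(i) Uniqueness. Maps out of a groupoid quotient are \emph{not} in general determined by their restriction to $X$; think of $*\to */G$. What you need is that $X^{\mathrm{Betti}}$ has monomorphic diagonal. This holds because $X^{\mathrm{Betti}}\to X^{\mathrm{Betti}}\times X^{\mathrm{Betti}}$ is the closed immersion $\Delta^{\mathrm{Betti}}\subseteq (X\times X)^{\mathrm{Betti}}$. Then extending along the effective epimorphism $X\to X/\Delta(X)^\dagger$ is a property rather than extra data.

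(ii) Step~1. Pullback does not commute with $j_{U*}$, so you should not compute the pullback of $i_*\C$ term by term. It is cleaner to note that the open $(T\setminus Z)^{\mathrm{Betti}}$ pulls back to the open $Y\setminus f^{-1}(Z)$. The complementary closed substack of that open is by definition $f^{-1}(Z)^\dagger$.

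(iii) Differentials in Step~2. The de Rham differentials are only $\C$-linear, and that is all you need: they must be maps of sheaves of gaseous $\C$-vector spaces, not $\pi_*\mathcal{O}$-module maps. This is unproblematic because each $\pi_*\Omega^i$ is static. Its sections over closed polydiscs $K$ are $\Omega^i(K^\dagger)$ and overconvergent polydiscs are affine, so no Theorem~B input is required. Exactness must hold as gaseous vector spaces; this follows from the continuous integration homotopies on overconvergent polydiscs.

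(iv) Covers. Descendability of $\pi_*\mathcal{O}$ gives a $!$-cover only together with relative affineness and base change for $\pi$. Both hold here: the base change of $\pi$ to $S^{\mathrm{Betti}}$ is the overconvergent neighbourhood of a graph inside the affine $S^{\mathrm{Betti}}\times D^\dagger$, cut out by an idempotent algebra.

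(v) Locality. Your claim that the de Rham quotient is local on $X$ is unnecessary. Being a cover is local on the target. The $D^{\mathrm{Betti}}$, for closed polydiscs $D$ whose interiors cover $X$, cover $X^{\mathrm{Betti}}$. Step~1 already identifies $X\times_{X^{\mathrm{Betti}}}D^{\mathrm{Betti}}$ with $D^\dagger$.
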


We will mostly apply the above result in the case $X=\G_m^\an$, where it asserts that there is an isomorphism $\G_m^\an/\G_m^\dagger\cong \C^{\times, \Betti}$, where $\G_m^\dagger$ denotes the overconvergent neighbourhood of $1\in\G_m^\an$. Finally, we will use the following lemma, which is implicitly used in \cite{RealLLC}:

\begin{lem}\label{lapullback}
    Let $X$ be a smooth algebraic variety over the real numbers. Then there is a natural pullback diagram 
    $$
    \begin{tikzcd}
    X(\R)^{\la} \arrow[d] \arrow[r] & X(\R)^{\Betti} \arrow[d] \\
    X(\C)^{\an} \arrow[r]            &  X(\C)^{\Betti} \nospacepunct{\;.}        
    \end{tikzcd}
    $$
\end{lem}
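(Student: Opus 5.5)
The plan is to reduce to a local statement and then read off the pullback from the Riemann--Hilbert isomorphism of \cref{anriemannhilb}. Write $M\coloneqq X(\R)$ and $U\coloneqq X(\C)$, so $M\subseteq U$ is a totally real closed real-analytic submanifold with $\dim_\R M=\dim_\C U$. I use the description of $M^{\la}$ as the overconvergent neighbourhood of $M$ inside $U^{\an}$: its functions are germs of holomorphic functions on open neighbourhoods of $M$ in $U$. I expect this to be the definition from \cite{RealLLC}, or at least a formal consequence of it.

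\emph{Step 1: reduction to a local statement.} The map $M^{\la}\to U^{\an}$ exists by construction. The map $M^{\la}\to M^{\Betti}$ is the map $M^{\la}\to M^{\la,\dR}\cong M^\Betti$, i.e.\ the real analogue of \cref{anriemannhilb}. Commutativity of the square is then a consequence of the uniqueness statement in \cref{anriemannhilb}, so we obtain a comparison map
\begin{equation*}
    \phi: M^{\la}\longrightarrow Y\coloneqq M^{\Betti}\times_{U^{\Betti}} U^{\an}\;.
\end{equation*}
All four corners satisfy descent for open covers of $M$ and of small neighbourhoods of $M$ in $U$, and $(-)^{\Betti}$ sends open covers to covers. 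Hence it suffices to check that $\phi$ is an isomorphism locally on $M$. This reduces us to $M$ an open ball in $\R^n$ and $U$ a Stein neighbourhood of it in $\C^n$.

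\emph{Step 2: computing $Y$ via Riemann--Hilbert.} By \cref{anriemannhilb}, $U^{\an}\to U^{\Betti}$ is the quotient of $U^{\an}$ by the groupoid $\Delta(U)^\dagger$. Since it is a descendable ($!$-)cover, it suffices to check that $\phi$ is an isomorphism after base change along $U^{\an}\to U^{\Betti}$. On one side this base change is
\begin{equation*}
    Y\times_{U^{\Betti}}U^{\an}\cong M^{\Betti}\times_{U^{\Betti}} \bigl(U^{\an}\times_{U^{\Betti}}U^{\an}\bigr)\cong M^{\Betti}\times_{U^\Betti}\Delta(U)^\dagger\;.
\end{equation*}
In words, this is the $\Delta(U)^\dagger$-saturation of the preimage of $M$. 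One can also argue directly: $Y\to U^{\an}$ is the base change of $M^\Betti\to U^\Betti$, which is the ``closed'' inclusion of $M$ with its Betti structure. Its pullback to $U^{\an}$ is the union of the overconvergent germs $x^\dagger$ of $U$ at points $x\in M$, i.e.\ the overconvergent neighbourhood of $M$ in $U$. Concretely, I would verify this on totally disconnected test algebras $A$ with $\ol{A}=\Cont(T,\C)$. An $A$-point of $Y$ is a map $\AnSpec A\to U^{\an}$ whose reduction $T\to U$ lands in $M$. By the overconvergent description of $\Nil^\dagger(A)$, such a point factors through every open neighbourhood of $M$, hence through $M^{\la}$. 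Conversely, every $A$-point of $M^{\la}$ gives such a datum.

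\emph{Step 3 and main obstacle.} The remaining, and I expect hardest, point is the identity ``the overconvergent neighbourhood of the germ of $M$ is again the germ of $M$''. In other words, saturating $M^{\la}$ under $\Delta(U)^\dagger$ does not enlarge it: $M^{\la}\times_{U^{\an}}\Delta(U)^\dagger\cong M^{\la}\times_{M^{\la}}\Delta(M^{\la})^\dagger$. To prove it, I would use the presentation of $\Delta(U)^\dagger$ as the intersection of the neighbourhoods $\{(u,v):|u-v|<\epsilon\}$, and of $M^{\la}$ as $\bigcap_\epsilon\{u:\operatorname{dist}(u,M)<\epsilon\}$. A cofinality argument then shows that the two limits of opens agree, because the $\epsilon$-neighbourhood of the $\epsilon$-neighbourhood of $M$ lies in the $2\epsilon$-neighbourhood. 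One must check that this colimit-of-functions description is compatible with the gaseous structure, which should follow from the corresponding statements about $\Delta(X)^\dagger$ used in proving \cref{anriemannhilb}. Together with descent along $U^{\an}\to U^{\Betti}$, this shows that $\phi$ is an isomorphism.
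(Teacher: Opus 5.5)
Your argument is correct in substance, but it takes a different route from the paper.

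\textbf{The paper's route.} The paper reduces Zariski-locally to the case where $X$ admits an étale map to $\A^n$. It then forms a cube lying over the analogous square for $\A^n$. The lateral faces are cartesian because $X(\C)^\an\to\A^{n,\an}$ is locally a finite disjoint union of copies of the base. The bottom face is handled by reducing to $n=1$: there, both $\A^{1,\an}\to\C^\Betti$ and $\R^\la\to\R^\Betti$ are $\G_a^\dagger$-torsors, so the equivariant map between them is automatically cartesian.

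\textbf{Your route.} You work analytic-locally and directly with functors of points. For strongly totally disconnected $A$ with $\ol{A}=\Cont(T,\C)$, $Y(A)$ is the set of $A$-points of $U^\an$ whose reduction $T\to U$ lands in $M$. On the other side, $M^\la(A)$ is the set of $A$-points factoring through every open neighbourhood of a compact piece of $M$. All the content lies in one principle: factoring through an open substack of $U^\an$ can be checked on the reduction $T\to U$, because the kernel of $A\to\ol{A}$ is $\Nil^\dagger(A)$. Granting this, the two sets coincide, and you are already done at the end of your Step 2.

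\textbf{Step 3.} Step 3 is therefore redundant, and the identity you write there ($M^\la\times_{M^\la}\Delta(M^\la)^\dagger$) is garbled. What your triangle-inequality argument actually shows is that $M^\la$ is $\Delta(U)^\dagger$-saturated. Combined with surjectivity of $M^\la\to M^\Betti$, this gives the same conclusion.

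\textbf{Minor point.} The uniqueness in \cref{anriemannhilb} concerns complex manifolds and does not directly give commutativity for $M^\la$. Commutativity is instead clear on $A$-points, since both composites send $x$ to $\bar{x}$.

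\textbf{Trade-off.} Your approach is more general: it applies to any real-analytic manifold inside a complexification, and it needs neither algebraic étale coordinates nor the torsor statement for $\R^\la$. However, it depends on two foundational inputs that you should cite precisely from \cite{RealLLC} rather than assert: the ``opens are detected on $\ol{A}$'' principle, and the description of $M^\la$ as an intersection of open neighbourhoods. The paper's proof instead stays within the formal calculus of torsors and uses only \cref{anriemannhilb} plus étaleness.
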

\begin{proof}
    Since the existence of such a commutative diagram is clear, we only need to check that it is cartesian, which we can do locally on $X(\C)^{\Betti}$ and, in particular, Zariski-locally on $X$. Then we may assume that $X$ admits an étale map to $\mathbb{A}^{n}$ and obtain a commutative cube
    $$\begin{tikzcd}
                                  & X(\R)^{\la} \arrow[rr] \arrow[ld] \arrow[dd] &                           & X(\R)^{\Betti} \arrow[dd] \arrow[ld] \\
X(\C)^{\an} \arrow[rr] \arrow[dd] &                                              & X(\C)^{\Betti} \arrow[dd] &                                      \\
                                  & {\mathbb{R}^{n,\la}} \arrow[ld] \arrow[rr]           &                           & {\R^{n,\Betti}} \arrow[ld]           \\
{\mathbb{A}^{n,\an}} \arrow[rr]           &                                              & {\C^{n,\Betti}}\nospacepunct{\;,}          &                                     
\end{tikzcd}$$
where the vertical maps are induced by the given étale map $X\rightarrow\A^n$. Then our task reduces to showing that each face but the top one is cartesian.

For each of the lateral faces, we just note that they are all induced by the map $X(\C)^\an\rightarrow \A^{n, \an}$, which locally on $\A^{n, \an}$ is given by a finite number of disjoint copies of the base as $X\rightarrow\A^n$ is étale. Finally, proving that the bottom face is cartesian reduces to the case $n=1$ by compatibility with finite products, where we note that both $\A^{1, \an}\rightarrow \C^\Betti$ and $\R^\la\rightarrow \R^\Betti$ are $\G_a^\dagger$-torsors by \cref{anriemannhilb}.
\end{proof}

\begin{cor}
\label{exact sequences}
The following is an exact sequence of totally disconnected stacks valued in animated abelian groups:
$$
\begin{tikzcd}
0 \arrow[r] & \mathbb{G}_{m}^{\dagger,2} \arrow[r] & \mathbb{C}^{\times,\la} \arrow[r] & {\mathbb{C}^{\times,\Betti}} \arrow[r] & 0\nospacepunct{\;.}
\end{tikzcd}
$$
In particular, we have
\begin{equation*}
    \mathbb{G}_{m}^{\an}/\mathbb{C}^{\times,\la}\cong*/\mathbb{G}_{m}^{\dagger}\;,\hspace{0.5cm} \mathbb{G}_{m}^{\an,2}/\mathbb{C}^{\times,\la}\cong \mathbb{C}^{\times, \Betti}\;.
\end{equation*}
\end{cor}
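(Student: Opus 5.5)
We apply \cref{lapullback} to the real algebraic group $X=\G_m$ over $\R$, but with the Weil restriction in mind. For $E=\C$ we regard $\C^\times$ as the real points of the smooth $\R$-variety $R_{\C/\R}\G_m$. Its complex points are $(\C\tensor_\R\C)^\times\cong\C^\times\times\C^\times$. This gives the cartesian square
\begin{equation*}
\begin{tikzcd}
\C^{\times,\la}\ar[r]\ar[d] & \C^{\times,\Betti}\ar[d,"\Delta"]\\
\G_m^{\an,2}\ar[r] & \C^{\times,\Betti}\times\C^{\times,\Betti}\nospacepunct{\;,}
\end{tikzcd}
\end{equation*}
where the right vertical map is the (twisted) diagonal $z\mapsto(z,\ol z)$. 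All maps are homomorphisms of group stacks, so the square is a pullback of animated abelian group objects.

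Next we compute the kernel of $\C^{\times,\la}\to\C^{\times,\Betti}$. Since the square is cartesian, this kernel equals the kernel of $\G_m^{\an,2}\to\C^{\times,\Betti}\times\C^{\times,\Betti}$. By \cref{anriemannhilb} applied to $\G_m^\an$, that kernel is $\G_m^{\dagger}\times\G_m^\dagger=\G_m^{\dagger,2}$, because $\G_m^\an\to\G_m^\an/\G_m^\dagger\cong\C^{\times,\Betti}$ has kernel the overconvergent neighbourhood of $1$. It remains to show that $\C^{\times,\la}\to\C^{\times,\Betti}$ is surjective as a map of stacks. Here we use base change: $\G_m^{\an,2}\to(\C^{\times,\Betti})^2$ is an epimorphism, being a quotient by $\G_m^{\dagger,2}$, and epimorphisms are stable under pullback. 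This yields the exact sequence.

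For the two consequences we use the same square. Cartesianness identifies the quotient $\G_m^{\an,2}/\C^{\times,\la}$ with the quotient of $(\C^{\times,\Betti})^2$ by the diagonal copy of $\C^{\times,\Betti}$. Explicitly, the map $\G_m^{\an,2}/\C^{\times,\la}\to(\C^{\times,\Betti})^2/\C^{\times,\Betti}$ is an isomorphism, since both sides are quotients of the same torsor data. The target is $\C^{\times,\Betti}$ via $(a,b)\mapsto a\ol b^{-1}$. For the first formula, $\C^{\times,\la}\to\G_m^\an$ is projection to one factor, and we form $\G_m^\an/\C^{\times,\la}$. The composite $\C^{\times,\la}\to\G_m^\an\to\C^{\times,\Betti}$ is surjective with kernel $\G_m^{\dagger,2}$. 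Inside that kernel, the subgroup $\G_m^\dagger$ of the second coordinate maps trivially to $\G_m^\an$, and the map $\C^{\times,\la}\to\G_m^\an$ is then surjective with kernel the second $\G_m^\dagger$. Hence $\G_m^\an/\C^{\times,\la}\cong */\G_m^\dagger$.

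The main obstacle is justifying that the identifications in \cref{anriemannhilb} and \cref{lapullback} are compatible with the group structures. We also need that the twisted diagonal really appears, i.e.\ that Weil restriction is compatible with the $\la$/$\Betti$/$\an$ constructions. We expect to handle this by naturality of \cref{lapullback} in $X$ applied to the multiplication maps, which makes the whole square a diagram of group stacks.
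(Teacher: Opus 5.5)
Your proposal is correct and is essentially the paper's argument. The paper's one-line proof likewise applies \cref{lapullback} (implicitly to the Weil restriction of $\G_m$, whose complex points give $\G_m^{\an,2}$) together with \cref{anriemannhilb} to obtain the kernel $\G_m^{\dagger,2}$ and surjectivity, and then derives both quotient formulas from the exact sequence via the nine lemma, which is exactly what your comparison of quotients along the cartesian square amounts to.
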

\begin{proof}
    The first part follows formally from \cref{anriemannhilb} and \cref{lapullback}. The second part follows from the first using \cref{anriemannhilb} and the nine lemma.
\end{proof}
\section{Finite-dimensional representations of real Lie groups}

Let $G$ be a real Lie group. Our goal in this section will be to show that vector bundles on the analytic stack $*/G^\la$ identify with finite-dimensional continuous representations of $G$.

\begin{lem}
The fully faithful embedding
\begin{equation*}
    \D(*/G^\la)\hookrightarrow \D(D_c(G))
\end{equation*}
from \cite[Prop.\ III.2.2]{RealLLC} restricts to an equivalence between the categories of perfect complexes on $\D(*/G^\la)$ and $D_c(G)$-modules in $\Perf(\C)$. Here, $D_c(G)$ is the algebra of compactly supported distributions on $G$ from \cite[Def.\ III.2.1]{RealLLC}.
\end{lem}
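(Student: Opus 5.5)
The plan is to use the fully faithful functor $F\colon \D(*/G^\la)\hookrightarrow \D(D_c(G))$ from \cite[Prop.\ III.2.2]{RealLLC}. Recall its shape: $\D(*/G^\la)$ is obtained by descent along $*\to */G^\la$. Write $\mathcal{O}(G^\la)^\vee$ for the dual of the locally analytic functions, i.e.\ the distributions; then $F$ sends a sheaf to its underlying object of $\D(\C)$ together with the action of $\mathcal{O}(G^\la)^\vee$, and hence of $D_c(G)$. The point to exploit is that $F$ is compatible with the forgetful functors to $\D(\C)$, both given by pullback along $*\to */G^\la$. So a perfect complex on $*/G^\la$ goes to a $D_c(G)$-module whose underlying complex is perfect. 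This is because pullback along $* \to */G^\la$ preserves perfect (equivalently dualisable) objects, and on $\AnSpec\C$ the dualisable objects are exactly $\Perf(\C)$. The functor therefore restricts to a fully faithful functor $\Perf(*/G^\la)\to \mathrm{Mod}_{D_c(G)}(\Perf(\C))$, and it remains to prove essential surjectivity.

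For essential surjectivity, take a $D_c(G)$-module $M$ with underlying complex in $\Perf(\C)$. I want to show that $M$ lies in the essential image, i.e.\ that the $D_c(G)$-action comes from a comodule structure over locally analytic functions, namely a map $M\to \mathcal{O}(G^\la)\otimes M$ satisfying the descent data.

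First reduce to the case where $M$ is a single finite-dimensional module in degree $0$. The essential image of $F$ is a stable subcategory closed under extensions. A perfect $D_c(G)$-module has a finite Postnikov filtration whose graded pieces are its homotopy modules $\pi_i M$, and these are finite-dimensional $\C$-vector spaces with a $D_c(G)$-action. Since the underlying category is $\D(\C)$, which is discrete, this truncation argument works.

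For a finite-dimensional module $V$, the action gives an algebra map $D_c(G)\to \mathrm{End}(V)=M_n(\C)$. This is a map of condensed (gaseous) algebras, so it is continuous. Restricting along the Dirac distributions $G\to D_c(G)$ gives a continuous representation $\rho\colon G\to GL_n(\C)$. Any continuous finite-dimensional representation of a real Lie group is real-analytic (Cartan's theorem), so its matrix coefficients are locally analytic functions on $G$. Therefore the coaction $V\to \mathcal{O}(G^\la)\otimes V$ is defined. Its image under $F$ recovers the original $D_c(G)$-module, because Dirac distributions span a dense subspace of $D_c(G)$ and both actions are continuous and agree on them.

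The main obstacle I expect is this last step: checking that a $D_c(G)$-module structure on $\C^n$ is determined by, and comes from, the induced group representation. This requires knowing that the map $D_c(G)\to M_n(\C)$ factors through the appropriate completion and is determined by its values on $\delta_g$. I would handle it with the explicit description of $D_c(G)$ from \cite[Def.\ III.2.1]{RealLLC} as the dual of locally analytic functions, so that maps to a finite-dimensional space correspond to elements of $\mathcal{O}(G^\la)\otimes M_n(\C)$. These are exactly matrix-coefficient functions, and multiplicativity of the map turns into the representation identity.
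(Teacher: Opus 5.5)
Your argument is correct in outline, but it takes a different route from the paper.

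\textbf{The paper's route.} The paper never constructs objects of $\D(*/G^\la)$ by hand. It uses that the essential image in \cite[Prop.\ III.2.2]{RealLLC} is characterised locally at the identity, and then reduces via \cite[Prop.\ III.1.7, III.1.5]{RealLLC} to one-parameter subgroups. There the question becomes essential surjectivity of $\D(*/\G_a^\dagger)\hookrightarrow\D(\C_\gas[U])$ on perfect complexes. This is settled by a Jordan-block computation: the idempotent algebra cutting out the essential image kills every finite-dimensional $\C[U]$-module, because it contains $\sum_{n\geq 0}\lambda^{-n}U^{-n}$.

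\textbf{Your route.} You build the preimage of a finite-dimensional module directly from the associated analytic representation, and compare the two module structures by density. This is essentially the argument the paper gives for its next lemma, comparing $D_c(G)$-modules with $\C[G]_\gas$-modules. So you are proving both lemmas at once and land directly on the identification with finite-dimensional continuous representations from \cref{Findimrep}.

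\textbf{Comparison.} The paper's route needs only formal properties of the embedding plus linear algebra. Yours costs two inputs that you assert rather than prove:
\begin{itemize}
\item That a real-analytic homomorphism $\rho$ really defines an object of $\D(*/G^\la)$, with all higher descent coherences. One way is to take the pullback of the tautological bundle along $*/G^\la\to */GL_n^{\an}$. The coherences are then automatic, because the relevant mapping anima from $(G^\la)^k$ are discrete and the homomorphism identity can be checked on real points by the identity theorem.
\item That $F$ of this object restricts on Dirac distributions to $\rho$. This requires unwinding the construction of $F$.
\end{itemize}
Neither is a real obstruction, but both should be stated.

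Finally, if you use the duality $\Hom(D_c(G),\C)\cong\mathcal{O}(G^\la)$ invoked in your last paragraph, Cartan's theorem is not needed for this lemma. The algebra map $D_c(G)\to M_n(\C)$ is then already an element of $\mathcal{O}(G^\la)\otimes M_n(\C)$, so $\rho$ is automatically analytic.
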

\begin{proof}
By \cite[Prop.\ III.2.2]{RealLLC}, containment in the essential image is equivalent to containment in the essential image of the fully faithful embedding
\begin{equation*}
    \D(*/(1\subseteq G^\la)^\dagger)\hookrightarrow \D(D(1\subseteq G))
\end{equation*}
from \cite[Prop.\ III.1.7]{RealLLC}, where $D(1\subseteq G)=(\O(1\subseteq G^\la)^\dagger)^*$ is the $\C$-linear dual of the space of overconvergent analytic functions at $1$ by definition. By the proof of \cite[Prop.\ III.1.7]{RealLLC}, this is in turn equivalent to containment in the essential image of the fully faithful embedding 
\begin{equation*}
    \D(*/(1\subseteq G^\la)^\dagger)\hookrightarrow \D(*/(1\subseteq G^\la)^\wedge)
\end{equation*}
from \cite[Prop.\ III.1.5]{RealLLC}, which may be checked on $1$-parameter subgroups of $G$ by loc.\ cit. 

We are thus reduced to checking that the fully faithful embedding
\begin{equation*}
    \D(*/\G_a^\dagger)\hookrightarrow\D(*/\widehat{\G}_a)\cong \D(\C_\gas[U])
\end{equation*}
from \cite[Prop.\ II.2.3]{RealLLC} induces an equivalence on categories of perfect complexes. By loc.\ cit., the essential image of this embedding identifies with those $\C_\gas[U]$-modules killed by the idempotent algebra
\begin{equation*}
A\coloneqq \left\lbrace \sum_{n\in\Z} a_nU^{-n}\in \C(\!(U^{-1})\!): |a_n|\frac{r^n}{n!}\rightarrow 0\text{ for some $r>0$}\right\rbrace\;,
\end{equation*}
so our task is to check that this kills any perfect complex $M$ over $\C$ equipped with an endomorphism $U: M\rightarrow M$. By induction on the cohomological degrees in which $M$ is nonzero, this reduces to the case where $M\cong \C^{\oplus n}$ is a finite-dimensional $\C$-vector space. However, then $U$ is given by an $n\times n$-matrix over $\C$ and by passing to direct summands of $M$ we can assume that $U$ only has a single Jordan block of eigenvalue $\lambda$. Tensoring with $\C[U, U^{-1}]$, which maps to $A$, already kills $M$ if $\lambda=0$, so we may assume that $\lambda\in\C^\times$. Then $1-\lambda^{-1}U^{-1}$ is nilpotent on $M$, but $A$ contains
\begin{equation*}
    \frac{1}{1-\lambda^{-1}U^{-1}}=\sum_{n\geq 0} \lambda^{-n}U^{-n}\;,
\end{equation*}
hence $M\tensor_{\C_\gas[U]} A=0$, as desired.

\end{proof}

\begin{lem}
The functor
\begin{equation*}
    \D(D_c(G))\rightarrow \D(\C[G]_\gas)
\end{equation*}
obtained by restricting scalars along $\C[G]_\gas\rightarrow D_c(G)$ induces an equivalence between the categories of $D_c(G)$-modules and $\C[G]_\gas$-modules in $\Perf(\C)$. Moreover, these categories are equivalent to the bounded derived category of finite-dimensional continuous $G$-representations.
\end{lem}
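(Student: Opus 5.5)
We work throughout with objects whose underlying complex of $\C$-vector spaces is perfect, so on both sides we are looking at perfect complexes over $\C$ equipped with additional structure. The plan is to reduce the first claim to finite-dimensional vector spaces by a dévissage on cohomological amplitude, and then to show that on such a space the $D_c(G)$-action and the $\C[G]_\gas$-action determine each other. The second claim will follow once we know that both categories are generated by the finite-dimensional continuous representations concentrated in a single degree.

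\textbf{Step 1 (restriction is fully faithful).} Since $\C[G]_\gas\to D_c(G)$ is a map of algebras, restriction of scalars preserves the underlying $\C$-module. To get full faithfulness on objects with perfect underlying complex, we first treat the case of modules concentrated in degree $0$, i.e.\ finite-dimensional $V$. For such $V$, a $\C[G]_\gas$-module structure amounts to a continuous homomorphism $\rho: G\to GL(V)$; here gaseous continuity of $G\to \mathrm{End}(V)$ is just continuity, since $V$ is finite-dimensional. By Cartan's theorem, any continuous $\rho$ is real-analytic. The analytic matrix coefficients then let the action extend uniquely to compactly supported distributions via $\mu\mapsto \int \rho\, d\mu$, which is well defined because $D_c(G)$ pairs with $C^\infty(G)$, or even with analytic germs in the construction of \cite{RealLLC}. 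Uniqueness holds because the delta distributions, i.e.\ the image of $\C[G]$, are dense in $D_c(G)$. Hence on the hearts both categories are equivalent to finite-dimensional continuous representations.

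\textbf{Step 2 (derived Ext groups).} Next we compare $\mathrm{Ext}$'s between finite-dimensional objects computed over $D_c(G)$ and over $\C[G]_\gas$. Both should compute continuous (equivalently, smooth or analytic) group cohomology $H^*(G, \Hom(V,W))$. On the $D_c(G)$ side, the earlier lemma identifies $\Hom$ with maps in $\D(*/G^\la)$, i.e.\ with locally analytic cohomology. On the $\C[G]_\gas$ side, one computes with the standard bar resolution using continuous cochains. The comparison between continuous and analytic cochains for finite-dimensional coefficients is a van Est / Lazard-type theorem, which I would cite or deduce from \cite{RealLLC}, where the equivalence of continuous and locally analytic cohomology is used.

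\textbf{Step 3 (dévissage and conclusion).} With Steps 1 and 2, induction on amplitude gives both full faithfulness and essential surjectivity of the restriction functor. For essential surjectivity, any object is an iterated extension of shifted finite-dimensional modules, and these extensions lift by the Ext comparison. For the last assertion, the same argument shows that perfect $\C[G]_\gas$-modules coincide with the bounded derived category of finite-dimensional continuous representations. Both are generated under extensions by the heart, and their $\mathrm{Ext}$'s agree: the Yoneda Ext of continuous representations computed inside the exact category is again continuous cohomology.

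The main obstacle is Step 2, namely identifying $\mathrm{Ext}$ in $\D(\C[G]_\gas)$ with continuous cohomology and matching it with the analytic version. Here the condensed/gaseous bar resolution has to be controlled, and the Yoneda Ext comparison for the bounded derived category must be handled as well.
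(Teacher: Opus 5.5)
\textbf{There is a genuine gap in Step 3, in your argument for the second assertion.} Yoneda $\mathrm{Ext}$ in the abelian category of finite-dimensional continuous representations does \emph{not} compute continuous group cohomology in general.

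Take $G=SL_2(\R)$. Its finite-dimensional continuous representations are analytic, hence $\mathfrak{sl}_2$-modules, and they form a semisimple category by Weyl's theorem. So all Yoneda $\mathrm{Ext}^{\geq 1}$ vanish. Yet the bar resolution from your Step 2 gives
\[
\mathrm{Ext}^2_{\C[G]_{\gas}}(\C,\C)=H^2_{\mathrm{cont}}(SL_2(\R),\C)\cong H^2(\mathfrak{sl}_2,\mathfrak{so}_2;\C)\cong\C
\]
by van Est. So $\C[G]_{\gas}$-modules in $\Perf(\C)$, with morphisms computed in $\D(\C[G]_{\gas})$, are not the bounded derived category of their heart. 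Indeed, in $D^b$ of a semisimple category no indecomposable $X$ has $\Hom(X,X[2])\neq 0$, whereas here $X=\C$ does. No d\'evissage can repair this.

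What holds for every $G$ is the heart-level statement: finite-dimensional $\C[G]_{\gas}$-modules are finite-dimensional continuous representations, by the adjunction between $\C[-]_{\gas}$ and the forgetful functor. That is all the later application to characters of $W_E$ and $E^\times$ needs.

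Be aware that the paper's proof rests on the same identification. It opens by asserting that both categories are the bounded derived categories of their finite-dimensional modules, and then proves only the heart statement. It does so by essentially your Step 1: analytic matrix coefficients $V\otimes V^*\to\mathcal{O}(G^{\la})$ are dualised over compact Stein $Z\subseteq G$ and assembled into $D_c(G)\otimes V\to V$, with uniqueness from density of $\C[G]_{\gas}$ in $D_c(G)$ and quasi-separatedness of $V$. One correction to your Step 1: $D_c(G)$ consists of analytic functionals, which pair with overconvergent analytic germs but not with $C^\infty(G)$. Only your second justification is valid.

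\textbf{For the first assertion, your route genuinely differs from the paper's, and it is the sound one.} A t-exact functor that induces an equivalence of hearts need not be an equivalence on objects with perfect underlying complex. What is needed is exactly your Step 2: restriction must induce
\[
R\Hom_{D_c(G)}(V,W)\simeq R\Hom_{\C[G]_{\gas}}(V,W)
\]
for finite-dimensional $V,W$. By the preceding lemma, the left side is computed in $\D(*/G^{\la})$, i.e.\ by analytic cochains. So this is a real van Est/Lazard-type comparison with continuous cochains, and the $SL_2(\R)$ example shows it involves higher $\mathrm{Ext}$ groups that are invisible on the hearts. You only cite this comparison, and you should also check that the comparison map is the one induced by restriction of scalars. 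Granting it, your d\'evissage does prove the first assertion, whereas the paper's reduction to hearts does not.
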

\begin{proof}
Since the two categories in question are equivalent to the bounded derived categories of finite-dimensional $D_c(G)$-modules and finite-dimensional $\C[G]_\gas$-modules, respectively, we can reduce to showing that any finite-dimensional $\C[G]_\gas$-module admits a unique compatible $D_c(G)$-module structure and that any $\C[G]_\gas$-linear map becomes automatically $D_c(G)$-linear via this construction.

To this end, note that finite-dimensional $\C[G]_\gas$-modules are the same as finite-dimensional continuous $G$-representations by the adjunction between $\C[-]_\gas$ and the forgetful functor from gaseous $\C$-vector spaces to condensed sets (this also proves the second part of the lemma). Given such a finite-dimensional continuous $G$-representation $V$, it is a standard fact in the representation theory of real Lie groups that its matrix coefficients are automatically analytic functions, i.e.\ that the canonical map $V\tensor V^*\rightarrow\Cont(G, \C)$ factors through a map $V\tensor V^*\rightarrow \O(G^\la)$. 

We claim that the map $V\tensor V^*\rightarrow \O(G^\la)$ induces the desired $D_c(G)$-module structure on $V$. Indeed, the given map is equivalent to a family of compatible maps $V\tensor V^*\rightarrow \O(Z\subseteq G^\la)^\dagger$ for $Z\subseteq G$ compact Stein, which dualise to compatible maps $D(Z\subseteq G)\rightarrow V^*\tensor V$. In turn, these induce a map $D_c(G)=\colim_{Z\subseteq G} D(Z\subseteq G)\rightarrow V^*\tensor V$, which is adjoint to a map $D_c(G)\tensor V\rightarrow V$. One easily checks that this yields a $D_c(G)$-module structure on $V$ extending the given $\C_\gas[G]$-module structure. Finally, uniqueness of this extension as well as the fact that any $\C_\gas[G]$-linear map automatically becomes $D_c(G)$-linear this way follows from the fact that $V$, being a finite-dimensional $\C$-vector space, is quasiseparated and $\C_\gas[G]$ is dense in $D_c(G)$.

\end{proof}

\begin{cor}\label{Findimrep}
Let $G$ be a real Lie group. Then the functor
\begin{equation*}
    \D(*/G^\la)\rightarrow \D(\C[G]_\gas)
\end{equation*}
from \cite[Question III.2.3]{RealLLC} induces an equivalence between the categories of perfect complexes on $*/G^\la$ and $\C[G]_\gas$-modules in $\Perf(\C)$. Moreover, these categories are equivalent to the bounded derived category of finite-dimensional continuous $G$-representations.
\end{cor}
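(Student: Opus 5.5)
The corollary follows by composing the two preceding lemmas. For the first claim, I would identify the functor $\D(*/G^\la)\to\D(\C[G]_\gas)$ from \cite[Question III.2.3]{RealLLC} with the composite
\begin{equation*}
    \D(*/G^\la)\hookrightarrow \D(D_c(G))\rightarrow \D(\C[G]_\gas)\;.
\end{equation*}
Here the first functor is the fully faithful embedding of \cite[Prop.\ III.2.2]{RealLLC}, and the second is restriction of scalars along $\C[G]_\gas\to D_c(G)$. This identification should hold by construction: the functor in loc.\ cit.\ sends a sheaf on $*/G^\la$ to its underlying $\C$-module with the induced $G$-action, and that action factors through the $D_c(G)$-action. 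I would nevertheless check it against the definitions in \cite{RealLLC}.

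With this identification in place, I restrict to perfect objects in two steps.
\begin{enumerate}
    \item The first lemma says that the embedding identifies $\Perf(*/G^\la)$ with $D_c(G)$-modules whose underlying complex lies in $\Perf(\C)$.
    \item The second lemma says that restriction of scalars identifies these with $\C[G]_\gas$-modules in $\Perf(\C)$.
\end{enumerate}
The composite of two equivalences is an equivalence, which gives the first claim.

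One small point needs checking: "perfect complexes on $*/G^\la$" should mean the dualisable objects of $\D(*/G^\la)$, and the first lemma should be read with that meaning. Since that lemma was stated exactly in these terms, no extra argument is needed. Also, restriction of scalars does not change the underlying $\C$-module, so the condition "underlying complex in $\Perf(\C)$" is the same on both sides.

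The second claim, identifying these categories with the bounded derived category of finite-dimensional continuous $G$-representations, is the final statement of the second lemma. I would simply transport it along the equivalence just established.

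The only genuine point to verify is the compatibility of the functor from \cite{RealLLC} with the factorisation through $D_c(G)$. I expect this to be immediate from the definitions and do not anticipate any real obstacle.
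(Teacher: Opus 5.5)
Your proposal is correct and matches the paper's proof. The paper likewise factors the functor as $\D(*/G^\la)\to\D(D_c(G))\to\D(\C[G]_\gas)$ and deduces both claims directly from the two preceding lemmas.
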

\begin{proof}
The functor in question factors as
\begin{equation*}
    \D(*/G^\la)\rightarrow \D(D_c(G))\rightarrow \D(\C[G]_\gas)
\end{equation*}
and hence the statement follows from the two previous lemmas.
\end{proof}

\section{Line bundles on $\Div^1$ and $\Div^d$}

Our aim in this section is to understand the relation between $\Div^1$ and $\Div^d$ for $d\geq 2$, which is defined as follows: 

\begin{defi}
    For $d\geq 1$, the stack $\Div_E^d$ of \emph{degree $d$ divisors} on $X_E$ is defined by sending a totally disconnected algebra $A$ to the anima of pairs $(\L, s)$ of a degree $d$ line bundle $\L$ on $X_{E,A}$ and a global section $s$ of $\L$ which is nonzero after pullback along every map $A\rightarrow\C$.
\end{defi}

To explain more precisely what we want to do, recall from \cite[Prop.\ II.3.6]{FarguesScholze} that, in the setting of the geometrisation of the $\ell$-adic local Langlands correspondence of Fargues and Scholze, the natural product map sending a $d$-tuple $((\cal{L}_1, s_1), \dots, (\cal{L}_d, s_d))$ to the pair $(\cal{L}_1\tensor\dots\tensor\cal{L}_d, s_1\cdots s_d)$ induces an isomorphism
\begin{equation}
\label{eq:geomdivd-symddivd}
    \Sym^d \Div^{1, \FF}\cong \Div^{d, \FF}
\end{equation}
of diamonds, where the major caveat is that the left-hand side in the above isomorphism should be understood as the \emph{sheaf} quotient of $(\Div^{1, \FF})^d$ by the canonical permutation action of $S_d$ as opposed to the stacky quotient. This isomorphism enables one to produce a line bundle $\cal{L}^{(d)}$ on $\Div^{d, \FF}$ from any line bundle $\cal{L}$ on $\Div^{1, \FF}$ roughly by taking a $d$-fold exterior tensor product.

However, the isomorphism (\ref{eq:geomdivd-symddivd}) \emph{fails} in our setting. Namely, while the natural map
\begin{equation}
\label{eq:geomdivd-mapsymdtodivd}
    \Sym^d \Div^1\rightarrow \Div^d
\end{equation}
is still surjective as we will show below, it is not an isomorphism anymore. To salvage this, we will introduce an open substack $\Div^d_{\HT\leq 1}$ of $\Div^d$ roughly parametrising degree $d$ divisors whose order of vanishing at $\infty$ is at most $1$. Then we will prove that the map (\ref{eq:geomdivd-mapsymdtodivd}) is an isomorphism after pullback to $\Div^d_{\HT\leq 1}$. Indeed, this will be enough for our purposes: As a second step, we will prove that 
\begin{equation*}
    \Map(\Div^d_{\HT\leq 1}, */\G_m^\an)\cong \Map(\Div^d, */\G_m^\an)
\end{equation*}
via the natural map, i.e.\ line bundles extend uniquely from $\Div^d_{\HT\leq 1}$ to $\Div^d$. While, intuitively speaking, this is due to the fact that $\Div^d_{\HT\leq 1}$ is an open substack of $\Div^d$ whose closed complement has ``codimension $2$'', we will instead prove this by an analysis of the stack $\Div^d$ in terms of charts which are explicit quotients. 

\subsection{The geometry of $\Div^d$}

Before we can begin, we first recall some basic results about $\Div^d$, starting with the following presentation: 

\begin{prop} 
\label{prop:divd}
    There is an isomorphism
    $$\Div_E^d\cong(\BC_E(\O(d))\setminus \{0\})/E^{\times,\la}\;.$$
\end{prop}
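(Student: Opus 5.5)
The plan is to imitate the degree-one case \cite[Prop.\ VI.3.2]{RealLLC}, using \cref{prop:pic} to trivialise the line bundle.

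First, define a map $\BC_E(\O(d))\setminus\{0\}\to\Div^d_E$. On $A$-points, send a section $s\in\Gamma(X_{E,A},\O_{X_E}(d))$ that is nonzero after pullback along every $A\to\C$ to the pair $(\O_{X_E}(d),s)$. The group $E^{\times,\la}$ acts on the source by scaling sections. By \cref{prop:pic} it also acts on $\O_{X_E}(d)$ as its automorphism group stack, and for this action $\lambda\in E^\times$ carries $(\O(d),s)$ to $(\O(d),\lambda s)$. So the map is $E^{\times,\la}$-equivariant, where $E^{\times,\la}$ acts trivially on $\Div^d_E$, and it descends to a map
\begin{equation*}
\Phi\colon(\BC_E(\O(d))\setminus\{0\})/E^{\times,\la}\to\Div^d_E\;.
\end{equation*}

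Next, consider the forgetful map $\Div^d_E\to\Pic^d_E$, $(\L,s)\mapsto\L$. By \cref{prop:pic}, $\Pic^d_E\cong */E^{\times,\la}$, with the point $*\to\Pic^d_E$ given by $\O_{X_E}(d)$. Form the fibre product $\Div^d_E\times_{\Pic^d_E}*$. Its $A$-points are triples $(\L,s,\alpha)$ with $\alpha\colon\L\cong\O_{X_E}(d)$. These are the same as sections of $\O_{X_E}(d)$ that are nonvanishing at every $\C$-point, so
\begin{equation*}
\Div^d_E\times_{\Pic^d_E}*\cong\BC_E(\O(d))\setminus\{0\}\;.
\end{equation*}
Since $*\to */E^{\times,\la}$ is an $E^{\times,\la}$-torsor, $\Div^d_E$ is the quotient of this fibre product by $E^{\times,\la}$. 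This identification agrees with $\Phi$.

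One point needs care here. By \cref{prop:pic}, every degree $d$ line bundle on $X_{E,A}$ is isomorphic to $\O(d)$ only locally in the descendable topology on $\TotDisc$. Descendable-locally is exactly what is needed for the quotient-stack statement, because quotient stacks are formed as sheafifications in that topology. The identification of the fibre product then holds on the nose, and no local triviality argument is needed for it.

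Finally, I expect the main (mild) obstacle to be checking that the open condition "$s$ nonzero after pullback along every $A\to\C$" is compatible with the isomorphism $\alpha$ and defines $\BC_E(\O(d))\setminus\{0\}$ as an open substack. I would handle this exactly as in the degree-one case: the condition is tested on the fibres over $S=\Hom(A,\C)$, and it is invariant under automorphisms of the line bundle.
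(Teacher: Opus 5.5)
Your proposal is correct and follows essentially the same route as the paper. The paper's proof simply observes that $\mathcal{L}$ is locally isomorphic to $\mathcal{O}(d)$ (by \cref{prop:pic}), and that under such an isomorphism $s$ becomes a fibrewise non-vanishing section of $\mathcal{O}(d)$; your fibre product $\Div^d_E\times_{\Pic^d_E}*$ together with the torsor $*\to */E^{\times,\la}$ is a careful formalisation of that one-line argument.
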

\begin{proof}
    Given any pair $(\L,s)$, the line bundle $\L$ is locally isomorphic to $\mathcal{O}(d)$ and, under this isomorphism, $s$ is sent to a fibrewise non-vanishing section, see also \cite[Prop.s VI.3.2, VI.4.3]{RealLLC}.
\end{proof}

The Banach-Colmez spaces appearing in the presentation above are fairly easy to describe:

\begin{prop}
\label{prop:bcspaces}
    Let $d\geq 1$. For $E=\mathbb{R}$, we have
    $$\BC_E(\O(d))\cong \A^{1, \an}\times_{\C^{\Betti}} H^{0}(X_{E},\mathcal{O}(d))^{\Betti} \cong \A^{1, \an}\times \R^{2d-1, \Betti}\;.$$
    For $E=\mathbb{C}$, we have
    $$\BC_E(\O(d))\cong \A^{2,\an}  \times_{(\C\times \C)^{\Betti}} 
H^{0}(X_{E},\O(d))^{\Betti}\cong \A^{2,\an}  \times \C^{d-1, \Betti}\;.
$$
\end{prop}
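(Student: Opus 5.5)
We compute $\BC_E(\O(d))$ directly from the pushout presentation of $X_{E,A}$ in \cref{def:pushpres}, working on a strongly totally disconnected $A$ with $\ol{A}=\Cont(S,\C)$ and using descent for the general case. A line bundle on a pushout has as global sections the fibre product of the global sections on the pieces over the sections on the overlap. For $E=\R$ this gives a cartesian square
\begin{equation*}
\Gamma(X_{\R,A},\O(d))\cong A\times_{\Cont(S,\C)} \Gamma\bigl(X_\R\times\AnSpec\Cont(S,\R),\O(d)\bigr)\;.
\end{equation*}
Since $X_\R$ is a projective $\R$-scheme, the base change formula (flatness of $\Cont(S,\R)$ over $\R$) identifies the second factor with $H^0(X_\R,\O(d))\otimes_\R\Cont(S,\R)=\Cont(S,H^0(X_\R,\O(d)))$. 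This is exactly the value on $A$ of the Betti stack $H^0(X_\R,\O(d))^\Betti$. The map to $\Cont(S,\C)$ is evaluation at $\infty$, so it is induced by the $\R$-linear map $\mathrm{ev}_\infty\colon H^0(X_\R,\O(d))\to\C$.

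Next we identify the functor $A\mapsto A$ with $\A^{1,\an}$ and $A\mapsto\Cont(S,\C)=\ol{A}$ with $\C^\Betti$, compatibly with the map $\A^{1,\an}\to\C^\Betti$ of \cref{anriemannhilb}. The map $A\to\ol{A}$ is the $\dagger$-reduction, which is precisely the quotient by $\G_a^\dagger$. This gives the first isomorphism
\begin{equation*}
\BC_\R(\O(d))\cong\A^{1,\an}\times_{\C^\Betti}H^0(X_\R,\O(d))^\Betti\;,
\end{equation*}
first on strongly totally disconnected algebras and then, since both sides are stacks, everywhere.

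For the second isomorphism we need two facts. First, $H^0(X_\R,\O(d))$ has real dimension $2d+1$: it consists of the sections of $\O_{\P^1_\C}(2d)$ fixed by the antilinear involution, which are $\C$-dimension $2d+1$ worth of real forms. Second, $\mathrm{ev}_\infty$ is surjective onto $\C$ for $d\ge1$, so we can choose a splitting $H^0\cong\C\oplus\ker(\mathrm{ev}_\infty)$ with $\ker\cong\R^{2d-1}$. The fibre product then becomes $\A^{1,\an}\times\R^{2d-1,\Betti}$, because the Betti functor preserves finite products.

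The case $E=\C$ is identical, using the second pushout. There the overlap is $\Cont(S,\C)^2$ via evaluation at $0$ and $\infty$, and the other piece is $A\times\ol{A}$ with $\ol{A}=A\otimes_{\C,z\mapsto\ol z}\C$, which as a functor is again $\A^{1,\an}$ (via conjugation). Together these give $\A^{2,\an}\times_{(\C\times\C)^\Betti}H^0(\P^1_\C,\O(d))^\Betti$. The evaluation map $H^0(\P^1,\O(d))\to\C^2$ at $0,\infty$ is surjective for $d\ge1$ with kernel $\C^{d-1}$, which yields the last formula.

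The step I expect to need the most care is the compatibility of the identification $\A^{1,\an}(A)=A\to\ol{A}=\C^\Betti(A)$ with the map in \cref{anriemannhilb}. This should follow from the description of $\C^\Betti$ on strongly totally disconnected algebras and of $\ol{A}$ as $A/\Nil^\dagger(A)$.
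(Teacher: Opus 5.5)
Your proposal is correct and takes the same route as the paper. The paper's proof just says the first isomorphisms are clear from the pushout presentation of $X_{E,A}$ and the second follow from $\dim_\R H^0(X_\R,\mathcal{O}(d))=2d+1$ and $\dim_\C H^0(\mathbb{P}^1_\C,\mathcal{O}(d))=d+1$; you fill in what it leaves implicit, in particular the surjectivity of evaluation at $\infty$ (resp.\ at $0$ and $\infty$) for $d\geq 1$, which is what lets you split off the kernel.

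One thing to tidy up: you reuse $\ol{A}$ for the conjugate algebra $A\otimes_{\C,z\mapsto\ol{z}}\C$, which clashes with the paper's $\ol{A}=A^{\dagger\text{-red}}$. Also, that piece is literally $\A^{1,\an}$, but its map to $\C^{\Betti}$ is reduction followed by complex conjugation; this harmless twist can be moved onto the $H^0$ side.
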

\begin{proof}
The first isomorphisms are clear from the pushout presentation of $X_{E, A}$ while the second ones follow by using $H^0(X_\R, \O(d))\cong \R^{2d+1}$ and $H^0(X_\C, \O(d))\cong \C^{d+1}$, which can both easily be calculated from the $\Proj$-description of $X_E$.
\end{proof}

From now on, we will restrict to the case $E=\C$, which will suffice for our applications. Here, we will later on also need a description of $\Div^d$ by charts. To this end, let us first introduce some notation: For any strongly totally disconnected $\C$-algebra $A$ with $S=\Hom(A, \C)$, an $A$-valued point of $H^0(\P^1_\C, \O(d))^\Betti$ will be a homogeneous degree $d$ polynomial $\sum_{i=0}^{d}c_i x^i y^{d-i}$, where each $c_i$ is a continuous map $S\rightarrow\C$. In this notation, the morphism
\begin{equation*}
    H^0(\P^1_\C,\O(d))^{\Betti}\rightarrow (\C\times \C)^{\Betti}
\end{equation*}
from \cref{prop:bcspaces} is given by evaluating at $0=[0:1]$ and $\infty=[1:0]$ on $\P^1_\C$.

\begin{lem}
\label{lem:charts}
The stack $\Div^d$ has an open cover given by
\begin{equation*}
    \text{$d-1$ copies of }\mathbb{A}^{2,\an}/(\mathbb{G}_{m}^\dagger)^2\times\C^{d-2, \Betti}\hspace{0.5cm}\text{and}\hspace{0.5cm}\text{$2$ copies of }\mathbb{A}^{1,\an}/\mathbb{G}_{m}^{\dagger}\times\C^{d-1, \Betti}\;.
\end{equation*}
\end{lem}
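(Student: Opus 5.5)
We start from \cref{prop:divd} and \cref{prop:bcspaces} for $E=\C$, which give
\begin{equation*}
    \Div^d\cong\bigl(\A^{2,\an}\times_{(\C\times\C)^{\Betti}} H^0(\P^1_\C,\O(d))^{\Betti}\setminus\{0\}\bigr)/\C^{\times,\la}\;.
\end{equation*}
On $A$-points with $A$ strongly totally disconnected and $S=\Hom(A,\C)$, such a point is a pair $(a,b)\in\A^{2,\an}(A)$ together with a continuous family of polynomials $f=\sum_i c_ix^iy^{d-i}$ satisfying $c_0=\ol a$ and $c_d=\ol b$. The condition ``nonzero'' means that $f_s\neq 0$ for each $s\in S$. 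Up to the matching conventions at $0$ and $\infty$, the group $\C^{\times,\la}$ acts by scaling. The open cover will be indexed by which coefficient is nonvanishing. For $0<i<d$, set $U_i=\{c_i\neq0\}$; this is an open condition on the Betti factor, since $\C^\times\subseteq\C$ is open. Also set $U_0=\{a\text{ invertible}\}$ and $U_d=\{b\text{ invertible}\}$, which are open in $\A^{1,\an}$. Pointwise, a nonzero polynomial has some nonzero coefficient, so these $d+1$ opens cover $\BC(\O(d))\setminus\{0\}$. They are $\C^{\times,\la}$-stable, so they descend to an open cover of $\Div^d$.

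Next we identify each chart. For $0<i<d$, the group $\C^{\times,\la}$ acts on $U_i$ freely through the Betti coordinate $c_i\in\C^{\times,\Betti}$. Using the exact sequence of \cref{exact sequences}, $0\to(\G_m^\dagger)^2\to\C^{\times,\la}\to\C^{\times,\Betti}\to0$, we first quotient by the $\C^{\times,\Betti}$-part by normalising $c_i=1$. This uses that $\C^{\times,\la}\to\C^{\times,\Betti}$ is surjective, so locally we may lift a section. The remaining coordinates are $(a,b)\in\A^{2,\an}$ together with the $d-2$ middle coefficients $c_j$, $j\neq 0,i,d$, in $\C^{d-2,\Betti}$; here $c_0,c_d$ are determined by $a,b$. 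The residual group is the kernel $(\G_m^\dagger)^2$, acting on $\A^{2,\an}$ by scaling each coordinate. Its action on the Betti coordinates is trivial, since $\G_m^\dagger\to\C^{\times,\Betti}$ is trivial. This gives the chart $\A^{2,\an}/(\G_m^\dagger)^2\times\C^{d-2,\Betti}$, and there are $d-1$ such charts.

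For $U_0$ the argument is similar, but we use instead the isomorphism $\G_m^\an/\C^{\times,\la}\cong */\G_m^\dagger$ from \cref{exact sequences}, which we slice along $a\in\G_m^\an$. Before quotienting, $U_0$ is $\G_m^\an\times\A^{1,\an}\times_{\C^\Betti}(\C^{d-1}\times\C)^{\Betti}$, with $b$ linked to $c_d$, which simplifies to $\G_m^\an\times\A^{1,\an}\times\C^{d-1,\Betti}$. The group $\C^{\times,\la}\subseteq(\G_m^\an)^2$ acts diagonally. Using the first factor to trivialise $a$ leaves the stabiliser: the kernel of the first projection restricted to $\C^{\times,\la}$, which by \cref{exact sequences} is one copy of $\G_m^\dagger$ acting on $b$. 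Meanwhile the Betti coordinates become invariant after normalising $c_0$. This yields $\A^{1,\an}/\G_m^\dagger\times\C^{d-1,\Betti}$, and $U_d$ is symmetric.

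The main obstacle is making this ``slicing'' rigorous in the stacky setting. The key step is to exhibit each chart as $Y\times^{H}G\to Y/H$, an induction from a subgroup $H$; the needed statement is that the fibre product over $(\C\times\C)^{\Betti}$ becomes trivial after quotienting by $\C^{\times,\la}$ via \cref{exact sequences}. We verify this with the nine lemma together with \cref{lapullback}-style cartesian squares. Finally, openness can be checked after pullback to strongly totally disconnected $A$, where it reduces to openness of $\{c_i\neq0\}$ in $S$ and of $\G_m\subseteq\A^1$.
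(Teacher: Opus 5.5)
Your proposal is correct and follows the paper's proof: you cover $\Div^d$ by the loci $\{c_i\neq 0\}$, identify the two end charts ($i=0,d$) via the isomorphism $\G_m^\an/\C^{\times,\la}\cong */\G_m^\dagger$, and identify the $d-1$ middle charts via the exact sequence $0\to(\G_m^\dagger)^2\to\C^{\times,\la}\to\C^{\times,\Betti}\to 0$ of \cref{exact sequences}. Your extra ``slicing'' discussion only spells out the standard identification $(Y\times G/K)/G\cong Y/K$, which the paper uses without comment.
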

\begin{proof}
    By the description of \cref{prop:divd}, we need at least one of the $c_k$ to be nonzero and hence we can cover $\Div^d$ by open substacks $\{c_i\neq 0\}$. For $i=0$ or $i=d$, we get the open substack given by
\begin{equation*}
    (\mathbb{G}_{m}^{\an}\times\mathbb{A}^{1,\an}\times\C^{d-1, \Betti})\,/\,\C^{\times,\la}
    \cong\mathbb{A}^{1,\an}/\mathbb{G}_{m}^{\dagger}\times\C^{d-1, \Betti}\;,    
\end{equation*}
    where the isomorphism is given by using the first canonical isomorphism in \cref{exact sequences}. For $1\leq i\leq d-1 $, we instead obtain
\begin{equation*}
    \mathbb{A}^{2,\an}\times \C^{d-2,\Betti}\times \C^{\times,\Betti}\,/\,\C^{\times,\la}\cong\mathbb{A}^{2,\an}/(\mathbb{G}_{m}^\dagger)^2\times\C^{d-2, \Betti}\;,
\end{equation*}
    where the isomorphism is due to the exact sequence of \cref{exact sequences}.
\end{proof}

\subsection{$\Div^d$ versus $\Sym^d\Div^1$}

We first recall how to define symmetric powers of stacks.

\begin{defi}
The functor
\begin{equation*}
    \Sym^d: \Ani\rightarrow\Ani
\end{equation*}
is defined by animating the functor
\begin{equation*}
    \Sym^d: \Set\rightarrow\Set\;.
\end{equation*}
as described in \cite[§5.1.4]{CesnaviciusScholze}. For any sheaf of anima on a site, we can then define its $d$-th symmetric power by applying $\Sym^d$ pointwise and then sheafifying.
\end{defi}

\begin{rem}
Putting together all $\Sym^d$, we obtain a functor $\Sym: \Ani\rightarrow \Ani$ given by $X\mapsto \Sym X\coloneqq \bigsqcup_{d\geq 0} \Sym^d X$. Then $\Sym X$ is the free animated monoid on $X$. We note that this is \emph{not} the same as the free monoid object in anima on $X$: Indeed, the free animated monoid on $*$ is $\mathbb{N}$ while the free monoid object in anima on $*$ is $\bigsqcup_{d\geq 0} */S_d$, where $S_d$ is the symmetric group on $d$ elements.
\end{rem}

Our first goal is to show that the map $\Sym^d\Div^1\rightarrow\Div^d$ induces an isomorphism on analytic de Rham stacks, which we recall are defined as follows:

\begin{defi}
Let $X$ be an object of $\TDStack$. Then we define its \emph{analytic de Rham stack} as the stack $X^{\an, \dR}$ on totally disconnecteds given by
\begin{equation*}
    X^{\an, \dR}(A)\coloneqq X(\ol{A})\;.
\end{equation*}
\end{defi}

\begin{rem}
In the case where $X$ is a complex analytic manifold, one can check that this agrees with the definition from \cref{anriemannhilb}.
\end{rem}

Our first goal is thus to prove the following:

\begin{prop}
\label{prop:symdivdr}
On the level of analytic de Rham stacks we have the following isomorphism:
\begin{equation*}
    \Sym^d(\Div^1)^{\an, \dR}\cong (\Div^d)^{\an, \dR}\;.
\end{equation*}
\end{prop}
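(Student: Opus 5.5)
Since $X^{\an,\dR}(A)=X(\ol{A})$ only depends on $\ol{A}$, and strongly totally disconnected algebras cover all totally disconnected ones in the descendable topology, I would first reduce to checking the claimed isomorphism on strongly totally disconnected $A$. For these, $\ol{A}=\Cont(S,\C)$ with $S$ a light profinite set. So the task is to show that $\Sym^d\Div^1\to\Div^d$ induces an isomorphism of sheaves after evaluating on algebras of the form $\Cont(S,\C)$. Some care is needed because $\Sym^d$ involves a sheafification. I would check that the sheafified symmetric power, once restricted to the $\Cont(S,\C)$-points, is computed as the sheafification for profinite covers $S'\to S$. This should follow because the de Rham functor $X\mapsto X^{\an,\dR}$ commutes with colimits and finite products, as it is precomposition with $A\mapsto\ol{A}$.

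Next I would compute both sides on $\Cont(S,\C)$-points, using \cref{prop:divd} and \cref{prop:bcspaces} (with $E=\C$; for $E=\R$ one argues likewise or descends). On $\ol{A}=\Cont(S,\C)$ the $\A^{2,\an}$ factor contributes exactly $\Cont(S,\C^2)$. So $\BC(\O(d))(\ol A)=\Cont(S,H^0(\P^1_\C,\O(d)))$, and $\C^{\times,\la}(\ol A)=\Cont(S,\C^\times)$. The latter holds because $\G_m^\dagger(\ol A)$ is trivial on reduced points, by \cref{exact sequences}. Hence $(\Div^d)^{\an,\dR}$ is the sheafification on profinite sets of $S\mapsto \Cont(S,(\C^{d+1}\setminus 0))/\Cont(S,\C^\times)$. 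This is the condensed set associated to the compact Hausdorff space $\P(H^0(\O(d)))\cong\P^d(\C)$, i.e.\ the space of effective degree $d$ divisors on $\P^1_\C$. To see this, note that $\C^{d+1}\setminus0\to\P^d$ admits local sections, and these lift along profinite sets. Likewise $(\Div^1)^{\an,\dR}=\P^1(\C)^{\Betti}$ restricted to profinites. Consequently $\Sym^d(\Div^1)^{\an,\dR}$ is the sheafification of $S\mapsto\Sym^d\Cont(S,\P^1(\C))$, where $\Sym^d$ is animated. On sets, this is a discrete quotient at each stage.

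The claim then reduces to a statement about condensed sets: the map
\begin{equation*}
\Sym^d(\underline{\P^1(\C)})\to\underline{(\P^1(\C))^d/S_d}\cong\underline{\P^d(\C)}
\end{equation*}
should be an isomorphism, where the left side is the sheafified animated symmetric power of the condensed set $\underline{\P^1(\C)}$. The second isomorphism is the fundamental theorem of algebra: multiplying linear forms gives a homeomorphism $(\P^1)^d/S_d\cong\P^d$. I expect this to be the main obstacle, for two reasons. First, one must know that the animated $\Sym^d$ of a set is discrete, i.e.\ just the set-theoretic quotient $X^d/S_d$. This holds because $\Sym^d$ on sets preserves reflexive coequalizers and filtered colimits, so its animation on discrete inputs is discrete (\cite[§5.1.4]{CesnaviciusScholze}). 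Second, one must check that for a compact Hausdorff $K$ the sheafified quotient of $\underline{K}^d$ by $S_d$ equals $\underline{K^d/S_d}$. The plan here is to show that $K^d\to K^d/S_d$ is a quotient map of compact Hausdorff spaces, hence surjective on profinite points after passing to a cover. Then the equivalence relation $K^d\times_{K^d/S_d}K^d$ equals the image of $S_d\times K^d$, which is compact. Since compact Hausdorff spaces embed fully faithfully into condensed sets preserving such quotients by closed equivalence relations, this finishes the argument. Finally, I would verify that the maps match, i.e.\ that the product map $((\L_i,s_i))\mapsto(\bigotimes\L_i,\prod s_i)$ corresponds to multiplication of linear forms.
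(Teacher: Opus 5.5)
Your proposal is correct and follows essentially the paper's route: identify $(\Div^d)^{\an,\dR}$ with the Betti stack of $(H^0(\mathbb{P}^1_\C,\mathcal{O}(d))\setminus\{0\})/\C^\times\cong\C\mathbb{P}^d$ using \cref{prop:bcspaces}, the Riemann--Hilbert isomorphism and $(\C^{\times,\la})^{\an,\dR}\cong\C^{\times,\Betti}$; show that $\Sym^d$ commutes with $\ul{(-)}$ for metrisable compact Hausdorff spaces; and conclude by $\Sym^d\C\mathbb{P}^1\cong\C\mathbb{P}^d$. The only difference is cosmetic and lies in the middle step: you argue via effective quotients by the closed equivalence relation $\bigcup_{\sigma\in S_d}\Gamma_\sigma\subseteq K^d\times K^d$, whereas the paper uses the same closed subset to show that $\Sym^d\ul{X}$ is quasiseparated (hence qcqs) and then uses that a bijection of compact Hausdorff spaces is a homeomorphism.
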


We start by establishing some preparatory lemmas.

\begin{lem}
\label{lem:symdcompacthausdorff}
Let $X$ be a metrisable compact Hausdorff space. Then
\begin{equation*}
    \Sym^d \ul{X}\cong \ul{\Sym^d X}\;,
\end{equation*}
where we define the topological space $\Sym^d X$ as the quotient of $X^d$ by the action of $S_d$ and $\ul{(-)}$ denotes the functor from topological spaces to (light) condensed sets.
\end{lem}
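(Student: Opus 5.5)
We need to show that the sheafified pointwise symmetric power of $\ul{X}$ agrees with the condensed set attached to the topological quotient $X^d/S_d$. The plan is to compare both sides on light profinite sets $T$.

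\textbf{Step 1: reduce to sets.} Since $\ul{X}$ is a sheaf of sets (it is $0$-truncated), and the animated $\Sym^d$ on a discrete set $Y$ computes $Y^d/S_d$ only up to higher homotopy, we first observe the following. For a set $Y$, the animated $\Sym^d Y$ is discrete and equal to the set-theoretic quotient $Y^d/S_d$. Indeed, the animation of $\Sym^d$ agrees with the classical functor on sets: a set is a filtered colimit of finite sets, which are coproducts of points, and $\Sym^d$ of a finite coproduct decomposes as a coproduct of products of lower $\Sym^{d_i}$. Hence $\Sym^d\ul{X}$ is the sheafification of the presheaf
\begin{equation*}
T\mapsto C(T,X)^d/S_d.
\end{equation*}

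\textbf{Step 2: construct the comparison map and prove injectivity.} The natural map $C(T,X)^d\to C(T,X^d)\to C(T,\Sym^d X)$ factors through the $S_d$-quotient, which gives a map $\Sym^d\ul X\to\ul{\Sym^d X}$. For injectivity after sheafification, take two tuples $f,g\in C(T,X)^d$ with the same image. For each $\sigma\in S_d$, the locus $T_\sigma=\{t: g(t)=\sigma f(t)\}$ is closed, and these finitely many closed sets cover $T$. The disjoint union $\bigsqcup_\sigma T_\sigma\to T$ is a surjection of profinite sets, hence a cover, and on each $T_\sigma$ the tuples agree up to $\sigma$. So the two sections become equal locally.

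\textbf{Step 3: surjectivity (the main point).} We must show that the quotient map $q\colon X^d\to\Sym^d X$ is surjective as a map of condensed sets, i.e.\ that any continuous $h\colon T\to\Sym^d X$ lifts to $X^d$ after pulling back along a profinite cover $T'\to T$. Take $T'=T\times_{\Sym^d X}X^d$. This is a closed subset of $T\times X^d$, hence compact Hausdorff and metrisable. It surjects onto $T$ because $q$ is surjective. Choose a light profinite set $T''$ surjecting onto $T'$; every metrisable compact Hausdorff space is a quotient of the Cantor set or of a light profinite set. Then $T''\to T$ is a surjection of light profinite sets, hence a cover, and $h$ lifts on $T''$ by construction.

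The hypotheses enter in two places. Compactness ensures that $\Sym^d X$ is Hausdorff and metrisable, since the quotient of a compact metrisable space by a finite group is again compact metrisable, and that the fibre product $T'$ is compact. Metrisability ensures that everything stays in the light world.

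I expect the only subtle step to be Step 1: checking that animated $\Sym^d$ introduces no higher homotopy on sets, as opposed to the stacky quotient $Y^d/\!/S_d$. This follows from \cite[§5.1.4]{CesnaviciusScholze}, since $\Sym^d$ preserves filtered colimits and its value on a free finite set is discrete and classical.
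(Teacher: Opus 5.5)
Your proof is correct, but it takes a different route from the paper. The paper never checks injectivity and surjectivity of sheaves directly. It shows that $\Sym^d\ul{X}$ is quasicompact, being a quotient of $\ul{X}^d$, and quasiseparated, because the pullback of its diagonal to $\ul{X^d\times X^d}$ is the closed subset $\bigcup_{\sigma}(\sigma\times\id)(\Delta(X^d))$. Hence both $\Sym^d\ul{X}$ and $\ul{\Sym^d X}$ lie in the image of metrisable compact Hausdorff spaces. The comparison map is then a continuous bijection on underlying points, hence a homeomorphism.

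You instead verify by hand that the comparison map is a monomorphism and an epimorphism. For the monomorphism you use the finite closed cover $T=\bigcup_\sigma T_\sigma$, which is the pointwise version of the same union of permutation graphs. For the epimorphism you use the compact fibre product $T\times_{\Sym^d X}X^d$ and a light profinite cover of it.

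What each route buys:
\begin{itemize}
\item The paper's argument is shorter once one accepts that qcqs light condensed sets are exactly metrisable compact Hausdorff spaces, and it gets surjectivity for free from the bijection on points.
\item Your argument avoids that structural input. It uses only Hausdorffness of $X$ (not compactness) for injectivity, and only Hausdorffness of $\Sym^d X$ (not its metrisability) for surjectivity, at the cost of an explicit lifting step.
\end{itemize}

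Your Step 1 addresses a point the paper passes over silently. The decomposition of $\Sym^d$ of coproducts is not needed there. It suffices that the animated functor agrees with the classical one on finite sets and commutes with filtered colimits. You should also note that sheafification is left exact, so it preserves $0$-truncated objects and the sheafified presheaf of sets stays a sheaf of sets.
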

\begin{proof}
First note that there is a natural map
\begin{equation*}
    \Sym^d \ul{X}\rightarrow\ul{\Sym^d X}\;.
\end{equation*}
If we can show that $\Sym^d \ul{X}$ is qcqs, then we are done: In that case, both source and target of the above map are in the image of the full inclusion of metrisable compact Hausdorff spaces into light condensed sets. Moreover, the map is clearly a bijection on $*$-valued points (as sheafification does not change the value of a condensed set on the point) and then we are done by the elementary fact that bijections between compact Hausdorff spaces are homeomorphisms.

Now observe that $\ul{X}^d\rightarrow \Sym^d \ul{X}$ is a surjection and hence $\Sym^d \ul{X}$ is quasicompact since $\ul{X}^d$ is. Furthermore, by definition of $\Sym^d\ul{X}$, we have a pullback diagram in condensed sets
\begin{equation*}
\begin{tikzcd}
    \ul{\bigcup_{\sigma\in S_d} (\sigma\times \id)(\Delta(X^d))}\ar[r]\ar[d] & \ul{X^d\times X^d}\ar[d] \\
    \Sym^d \ul{X}\ar[r, "\Delta"] & \Sym^d \ul{X}\times\Sym^d \ul{X}\nospacepunct{\;,}
\end{tikzcd}
\end{equation*}
where $\Delta(X^d)$ denotes the image of the diagonal embedding of $X^d$ into $X^d\times X^d$. As all $(\sigma\times\id)(\Delta(X^d))$ are closed subsets of $X^d\times X^d$ due to $X$ being Hausdorff, we know that their union is a closed subset of $X^d\times X^d$ as well. Since $\ul{X}^d\times\ul{X}^d\cong \ul{X^d\times X^d}\rightarrow \Sym^d\ul{X}\times\Sym^d\ul{X}$ is a surjection, this shows that $\Delta$ is a quasicompact morphism and thus $\Sym^d\ul{X}$ is quasiseparated, so we win.
\end{proof}
\begin{cor}
\label{cor:symbetti}
For any metrisable compact Hausdorff space $X$, we have
\begin{equation*}
    \Sym^d(X^\Betti)\cong (\Sym^d X)^\Betti
\end{equation*}
as objects of $\TDStack$.
\end{cor}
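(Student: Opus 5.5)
We compare the functors of points of both sides on strongly totally disconnected $\C$-algebras, which cover $\TotDisc$ in the descendable topology. For an object $Y$ of $\TDStack$ and a strongly totally disconnected $A$ with $\ol{A}=\Cont(T,\C)$, set $Y^\Betti(A)=\Cont(T,Y)$. On the left, $\Sym^d(X^\Betti)$ is the sheafification of $A\mapsto \Sym^d(\Cont(T,X))$; here $\Cont(T,X)$ is a set, so the animated $\Sym^d$ is just the naive quotient by $S_d$. On the right, $(\Sym^d X)^\Betti(A)=\Cont(T,\Sym^d X)$. The natural map between them comes from composing $d$-tuples of maps $T\to X$ with the projection $X^d\to \Sym^d X$.

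The main input is \cref{lem:symdcompacthausdorff}, which says $\Sym^d\ul{X}\cong\ul{\Sym^d X}$ as light condensed sets. Since the construction $(-)^\Betti$ is colimit-preserving on $\text{Cond}\Ani^{\text{light}}$, we apply it to the colimit diagram defining $\Sym^d\ul{X}$, namely the sheafified quotient of $\ul{X}^d$ by the $S_d$-action, which is a $0$-truncated colimit. Concretely, $\Sym^d\ul X$ is the coequaliser, taken in light condensed sets, of the equivalence relation $R=\ul{\bigcup_\sigma(\sigma\times\id)(\Delta(X^d))}\rightrightarrows \ul{X^d}$ from the proof of the lemma. This is a colimit of a groupoid object that is effective in condensed anima, i.e.\ a geometric realisation of the Čech nerve of $\ul{X^d}\to\Sym^d\ul X$. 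The functor $(-)^\Betti$ preserves this geometric realisation and preserves finite products, since $(S\times S')^\Betti=S^\Betti\times S'^\Betti$ for profinite $S,S'$, as $\Cont(S\times S',\C)=\Cont(S,\C)\otimes\Cont(S',\C)$ in the gaseous setting. Hence $(\Sym^d X)^\Betti$ is the quotient in $\TDStack$ of $(X^\Betti)^d$ by the Betti stack of the equivalence relation $R$.

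It remains to identify this quotient with the sheafified $\Sym^d(X^\Betti)$. Evaluated on $A$, the relation $R^\Betti(A)=\Cont(T,R)$ is exactly the set of pairs of tuples $f,g\colon T\to X^d$ such that $f(t)$ and $g(t)$ differ by some permutation at each $t$. This permutation may vary with $t$. However, the closed sets $\{t : g(t)=\sigma f(t)\}$ cover $T$, and since $T$ is profinite we can refine them to a finite clopen partition on each piece of which a single $\sigma$ works. Therefore, after passing to a finite disjoint clopen cover of $T$, which is a cover in our topology, $R^\Betti$ agrees with the relation generated by the $S_d$-action. So the two sheaf quotients coincide. Alternatively, the same point can be phrased as the remark that both sides are left Kan extended from profinite sets. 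Surjectivity of $\Cont(T,X^d)\to\Cont(T,\Sym^d X)$ locally on $T$ also follows, because $X^d\to\Sym^dX$ is a surjection of compact Hausdorff spaces and thus admits lifts after a profinite cover.

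The main obstacle is the step asserting that $(-)^\Betti$ commutes with this quotient. This needs the lemma's qcqs statement to guarantee that the condensed quotient is the topological one, and it needs the refinement argument to handle pointwise-varying permutations. I expect the refinement argument to be the only genuinely non-formal point. There, metrisability, i.e.\ lightness, ensures that all the profinite sets involved stay light.
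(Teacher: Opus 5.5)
Your overall route is sound and differs from the paper's, but the step you yourself single out as the only non-formal one is false as stated. A finite cover of a profinite set by closed subsets need \emph{not} admit a subordinate finite clopen partition.

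Here is a counterexample. Take $X=[0,1]$, $d=2$ and $T=\mathbb{N}\cup\{\infty\}$ the one-point compactification. Put $f(n)=(1/n,0)$ and $f(\infty)=(0,0)$. Put $g(n)=f(n)$ for $n$ even, $g(n)=(0,1/n)$ for $n$ odd, and $g(\infty)=(0,0)$. Then $(f,g)\in\Cont(T,R)$. The closed sets $F_{\id}$ and $F_\tau$ are the even, resp.\ odd, integers together with $\infty$. Every clopen neighbourhood of $\infty$ contains integers of both parities, and at each of them only one permutation works. So on no finite clopen partition of $T$ (i.e.\ after no finite product decomposition of $A$) is $g=\sigma f$ with $\sigma$ constant on each piece. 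Hence your identification of $R^\Betti$ with the equivalence relation generated by the $S_d$-action does not go through as written.

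The repair is to use the closed cover itself rather than trying to refine it. The map $T'\coloneqq\bigsqcup_\sigma F_\sigma\to T$ is a surjection of light profinite sets. Pulled back along $\AnSpec A\to T^\Betti$, it therefore gives a cover of $A$ in the descendable topology; this is the same fact that makes $(-)^\Betti$ well defined. On each $F_\sigma$ the single permutation $\sigma$ works. With this change your argument is complete.

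In fact, your argument then no longer needs \cref{lem:symdcompacthausdorff} at all. Both properties of $\Sym^d(X^\Betti)\to(\Sym^d X)^\Betti$ can be checked directly on strongly totally disconnected $A$: surjectivity by lifting along $X^d\to\Sym^d X$ after a profinite cover, and injectivity by the closed-cover argument. The detour through the geometric realisation of $R_\bullet^\Betti$ is then optional. Also, presenting $\ul{\Sym^d X}$ as the quotient of $\ul{X^d}$ by $\ul{R}$ only uses that $X^d\to\Sym^d X$ is a surjection of compact Hausdorff spaces, not the qcqs statement.

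The paper instead argues in two lines. On a totally disconnected $A$, both sides are the values at the profinite set $\Hom(A,\C)$ of $\Sym^d\ul{X}$ and $\ul{\Sym^d X}$, so the corollary is a transport of the lemma. The closed-cover phenomenon is handled implicitly there, in the pullback square identifying the fibre product with $\ul{\bigcup_\sigma(\sigma\times\id)(\Delta(X^d))}$. Your route makes that point explicit. The paper's route is shorter, but leaves implicit the formal compatibility of sheafification with $A\mapsto\Hom(A,\C)$.
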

\begin{proof}
For any totally disconnected $A$, we have $X^\Betti(A)=\Cont(\Hom(A, \C), X)$, where $\Hom(A, \C)$ is a profinite set, and similarly for $A$-valued points of $(\Sym^d X)^\Betti$. Now we are done by the previous lemma.
\end{proof}

We are now in a position to prove \cref{prop:symdivdr}.

\begin{proof}[Proof of \cref{prop:symdivdr}]
Note that, by \cref{prop:bcspaces} and the analytic Riemann--Hilbert isomorphism from \cref{anriemannhilb}, we have
\begin{equation*}
    \BC(\O(d))^{\an, \dR}\cong H^0(\P^1_\C, \O(d))^\Betti
\end{equation*}
and hence
\begin{equation*}
    (\Div^d)^{\an, \dR}\cong (H^0(\P^1_\C, \O(d))\setminus \{0\}/\C^\times)^\Betti\;,
\end{equation*}
where we have used that $(\C^{\times, \la})^{\an, \dR}\cong \C^{\times, \Betti}$. Thus, by \cref{cor:symbetti}, we are reduced to proving that
\begin{equation*}
    \Sym^d (H^0(\P^1_\C, \O(1))\setminus \{0\}\,/\,\C^\times)\cong H^0(\P^1_\C, \O(d))\setminus \{0\}\,/\,\C^\times
\end{equation*}
as topological spaces via the product map. However, this just amounts to the well-known fact that
\begin{equation*}
    \Sym^d \C\P^1\cong \C\P^d
\end{equation*}
via the map 
\begin{equation*}
    ([a_1:b_1], \dots, [a_d:b_d])\mapsto [c_1:\dots:c_{d+1}]\;,
\end{equation*}
where $c_i$ is the coefficient of $x^iy^{d-i}$ in $\prod_i (a_ix+b_iy)$. 
\end{proof}

Next, we prove that, even before taking de Rham stacks, the product map is at least surjective:

\begin{lem}
\label{lem:prodsurjects}
The product map 
\begin{equation*}
    (\Div^1)^d\rightarrow\Div^d
\end{equation*}
is a surjection.
\end{lem}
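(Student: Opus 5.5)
We have to show that the product map $(\Div^1)^d\rightarrow\Div^d$ is a surjection of stacks on $\TotDisc$ with the descendable topology. Concretely: for any $A\in\TotDisc$ and any $A$-point of $\Div^d$, after a descendable cover $A\to A'$ the point lifts to $(\Div^1)^d(A')$. We may pass to a strongly totally disconnected cover, so we assume $A$ is strongly totally disconnected, with $\ol{A}=\Cont(S,\C)$. We also work locally on $\Div^d$, so we may use the presentation of \cref{prop:divd}. Then an $A$-point is given, up to the action of $\C^{\times,\la}$, by a section of $\BC(\O(d))(A)$ that is nonzero at every point. By \cref{prop:bcspaces}, such a section is a pair consisting of:
\begin{itemize}
\item a continuous family $f=\sum_i c_i x^iy^{d-i}$ of homogeneous polynomials, with $c_i\in\Cont(S,\C)$, and
\item lifts $\tilde c_0,\tilde c_d\in A$ (one in each factor of the pushout) of the two extreme coefficients $c_0$ and $c_d$, that is, of the values at $0$ and $\infty$.
\end{itemize}

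The plan proceeds in two steps.

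\textbf{Step 1: factor the underlying Betti family.} By \cref{prop:symdivdr} together with \cref{cor:symbetti}, the map $(\C\P^1)^d\to\Sym^d\C\P^1\cong\C\P^d$ is a surjection of compact metrisable spaces. Pulling it back along the map $S\to\C\P^d$ defined by $f$ gives a surjection $T\to S$ of compact metrisable spaces. We then cover $T$ by a light profinite set $T'\twoheadrightarrow T$. The map $\Cont(S,\C)\to\Cont(T',\C)$ is descendable, and it lifts to a descendable map $A\to A'$; here we use that profinite covers of $S$ lift to totally disconnected covers of $A$, as in \cite[Prop.\ V.2.8]{RealLLC}. Over $T'$ we now have continuous linear forms $\ell_j=a_jx+b_jy$, $j=1,\dots,d$, each nonvanishing at every point, with $f=u\prod_j\ell_j$ for a continuous unit $u$. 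Absorbing $u$ into $\ell_1$, we get $f=\prod_j\ell_j$ exactly.

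\textbf{Step 2: lift the overconvergent data.} It remains to lift the data at $0$ and $\infty$, and this is the main obstacle. We need elements $\tilde a_j,\tilde b_j\in A'$ lifting $a_j,b_j$ such that $\prod_j\tilde a_j=\tilde c_d$ and $\prod_j\tilde b_j=\tilde c_0$ (matching indices to the convention of $x^iy^{d-i}$). Work near $\infty$ and split $S$ into clopen pieces according to whether $c_d$ vanishes. On pieces where the relevant lifted coefficient $\tilde c$ is a unit, we take arbitrary lifts $\tilde a_2,\dots,\tilde a_d$. These are units after possibly refining the cover, since an element of a totally disconnected algebra whose reduction is invertible is invertible: the kernel is $\Nil^\dagger$. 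We then set $\tilde a_1=\tilde c\prod_{j\ge2}\tilde a_j^{-1}$. In general $c_d$ may vanish at some points without vanishing on a clopen set, so we need a uniform argument. Since $\ol{A'}$ consists of continuous functions, we can write $T'=\bigcup_j\{a_j\neq0\}\cup\{c_d=0\}$; but the lifting has to work near points where several $a_j$ vanish at once. To handle this, we would use the quotient by $\C^{\times,\la}$ and the charts of \cref{lem:charts}. Descendably locally, there is a cover of $T'$ by closed pieces on each of which some fixed index $j_0$ satisfies $|a_{j_0}|\geq\epsilon>0$ or is the only possibly vanishing factor. On such a piece we lift all $a_j$ with $j\neq j_0$ to units, using the $\C^{\times,\la}$-action to rescale so that their lifts are units, and we solve for $\tilde a_{j_0}=\tilde c\prod_{j\ne j_0}\tilde a_j^{-1}$. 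Finite closed covers of profinite sets are clopen, so these pieces glue. The same argument applies at $0$ with the $b_j$ and $\tilde c_0$. This produces the desired $A'$-point of $(\Div^1)^d$ mapping to the given point of $\Div^d$, which proves surjectivity.
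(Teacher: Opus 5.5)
Your Step~1 is fine and is essentially what the paper does: it uses the surjectivity on analytic de Rham stacks coming from \cref{prop:symdivdr}. You also correctly locate the difficulty in Step~2, but Step~2 does not go through.

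The dangerous points are those where the divisor meets $\infty$ (or $0$) with multiplicity at least $2$, i.e.\ where at least two of the $a_j$ vanish at the same point of $T'$. Near such a point there is no index $j_0$ that is the only possibly vanishing factor, and none with $|a_{j_0}|\geq\epsilon$. Rescaling by $\C^{\times,\la}$ cannot turn the lift of a vanishing function into a unit, since it only multiplies by units.

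In fact no argument that merely refines $S$ and rescales can work. Take $d=2$, $A=\C\{r\}^\dagger$, and the $A$-point $rx^2+y^2$ of $\BC(\O(2))\setminus\{0\}$. Its reduction $y^2$ forces $a_1=a_2=0$. You would then need $r=\tilde a_1\tilde a_2$ with $\tilde a_1,\tilde a_2\in\Nil^\dagger(A)=rA$. This is impossible, because any such product vanishes to order at least $2$ at the origin.

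Separately, your claim that finite closed covers of profinite sets are clopen is false as stated. For example, $\{0\}\cup\{\tfrac{1}{2n}\}$ and $\{0\}\cup\{\tfrac{1}{2n+1}\}$ cover $\{0\}\cup\{\tfrac1n\}$, and this cover cannot be refined by a clopen partition. So the gluing step is not justified either.

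The missing idea is that you must pass to a descendable cover that is not a topological refinement at all: a finite free extension adjoining a root of a polynomial. The paper proceeds as follows.
\begin{enumerate}
\item By induction on $d$, it reduces to lifting, locally on $A$, a factorisation $a_1a_2=a+r$ with $r\in\Nil^\dagger(A)$ to a factorisation $(a_1+t_1)(a_2+t_2)=a$ with $t_1,t_2$ both $\dagger$-nilpotent.
\item It works in the universal case and, using \cite[Lem.\ 5.4.6]{FFdR}, separates the case where one of $a_1,a_2$ is invertible from the case where both are $\dagger$-nilpotent.
\item If $a_1$ is invertible, a section is given by $t_1=0$ and $t_2=-ra_1^{-1}$. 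This is exactly your unit argument.
\item If both are $\dagger$-nilpotent, it adjoins a root $t$ of $t^2+t(a_1+a_2)+r=0$. The algebra $A[t]/((a_1+t)(a_2+t)-a)$ is free of rank $2$ over $A$, hence descendable. Moreover $t$ is automatically $\dagger$-nilpotent, so $t_1=t_2=t$ works, and cancellation for descendability finishes the argument.
\end{enumerate}
In the example above, this last step simply adjoins $\sqrt r$.
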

\begin{proof}
As surjectivity is a condition on $\pi_0$, we may reduce to $A$ being a static totally disconnected $\C$-algebra; indeed, note that $\pi_0\BC(\O(d))(A)$ only depends on $\pi_0(A)$. Let $S\coloneqq \Hom(A, \C)$ and take any $A$-point of $\Div^d$. After localising, we may assume that it comes from an $A$-point of $\BC(\O(d))\setminus \{0\}$, i.e.\ it corresponds to a tuple $(c_0, c_1, \dots, c_d)$ with $c_1, \dots, c_{d-1}\in \Hom(S, \C)$ and $c_0, c_d\in A$ by \cref{prop:bcspaces}. After localising further, we may assume that $c_1, \dots, c_{d-1}$ actually come from $\ol{A}$. Then our goal is to produce $d$ pairs $(a_i, b_i)\in A\times A$ such that
\begin{equation*}
    \prod_i (a_ix+b_iy)=\sum_i c_i x^i y^{d-i}\;.
\end{equation*}
More precisely, we need to ensure the above equality in $\ol{A}[x, y]$ together with $\prod_i a_i=c_d$ and $\prod_i b_i=c_0$ in $A$. As we already know that $(\Div^1)^d\rightarrow \Div^d$ induces a surjection on analytic de Rham stacks by \cref{prop:symdivdr}, we are then reduced to proving the following statement: Given an element $a\in A$, we can lift any factorisation of the image of $a$ in $\ol{A}$ into $d$ factors to $A$, at least locally on $A$. Applying this to $a=c_0$ and $a=c_d$ will then yield the claim.

Clearly, we are immediately reduced to the case $d=2$ by induction. Then we have $a_1, a_2\in A$ with $a_1a_2=a+r$ for some $r\in\Nil^\dagger(A)$ and we are looking for $t_1, t_2\in\Nil^\dagger(A)$ such that $(a_1+t_1)(a_2+t_2)=a$. As $A$ is bounded, we can find some $B\in\R$ such that $a_1, a_2$ and $a$ are each classified by a map from $\C[\widehat{B^{-1}z}]_\gas$, where $\C[\widehat{z}]_\gas$ denotes the free gaseous $\C$-algebra on a topologically nilpotent element. Then we are reduced to the universal case of showing that
\begin{equation*}
    A'\coloneqq A\{t_1, t_2\}^\dagger/((a_1+t_1)(a_2+t_2)=a)
\end{equation*}
is a descendable algebra over
\begin{equation*}
    A\coloneqq \C[\widehat{B^{-1} a_1}, \widehat{B^{-1} a_2}, \widehat{B^{-1} a}]_\gas\{r\}^\dagger/(a_1a_2=x+r)\;,
\end{equation*}
where $\C\{z\}^\dagger\coloneqq\colim_{\lambda>0} \C[\widehat{\lambda z}]_\gas$ is the free gaseous $\C$-algebra on a $\dagger$-nilpotent element. By \cite[Lem.\ 5.4.6]{FFdR}, it suffices to separately consider the cases where the maps $\AnSpec A\rightarrow\A^{1, \an}$ classifying $a_1$ and $a_2$ both factor over $\G_a^\dagger$ or where at least one of them factors over $\G_m^\an$. In the latter case, we can assume without loss of generality that the map factoring over $\G_m^\an$ is the one classifying $a_1$ and then the map $A\rightarrow A'$ has a section given by $t_1\mapsto 0$ and $t_2\mapsto -ra_1^{-1}$. 

In the remaining case $a_1, a_2\in\Nil^\dagger(A)$, observe that
\begin{equation*}
    A''\coloneqq A[t]/((a_1+t)(a_2+t)=a)
\end{equation*}
is a descendable $A$-algebra since it is just isomorphic to $A\oplus tA$ as an $A$-module and, clearly, $A$ is a retract of this. Moreover, the assumption that $a_1$ and $a_2$ are $\dagger$-nilpotent ensures that $t$ solving the equation
\begin{equation*}
    0=t^2+t(a_1+a_2)+(a_1a_2-a)=t^2+t(a_1+a_2)+r
\end{equation*}
is automatically $\dagger$-nilpotent as well. Thus, there is a map $A'\rightarrow A''$ given by $t_1, t_2\mapsto t$ and this concludes the proof as descendability satisfies cancellation.
\end{proof}

\begin{exx}\label{exx:failure}
The multiplication map $\Sym^d \Div^1\rightarrow\Div^d$ is not injective. To see this, take $A=\C\{t\}^\dagger$ and $d=2$. Then take the $A$-point of $\Div^2$ defined by the function $x^2\in\BC(\O(2))(A)$. Consider the $A$-points of $(\Div^1)^2$ obtained from the pairs $(x, x)$ and $(x, x+ty)$ of $A$-points of $\BC(\O(1))$. Then they both map to $x^2\in \Div^2(A)$ under the multiplication map: Indeed, note that $x\cdot x=x^2$ while $x(x+ty)=x^2+txy$, but the coefficient of $xy$ is only remembered in $\ol{A}$, where $t=0$. However, $(x, x)$ and $(x, x+ty)$ do not define the same $A$-point of $(\Div^1)^2$ up to permutation: Indeed, for this we would need that $t$ is mapped to zero under the multiplication action of $\C^{\times, \la}(A)$, which is impossible. 

Note that this issue can also not be solved by passing to a descendable cover of $A$. Indeed, for this we would need the map $\C\{t\}^\dagger\rightarrow\C$ given by $t\mapsto 0$ to be descendable, which is wrong: Then $\C\{t\}^\dagger\rightarrow\C[t]$ would be descendable as well, but $\C\{t\}^\dagger$ is an idempotent algebra over $\C[t]$.
\end{exx}

To what extent can we salvage the failure of the map $\Sym^d \Div^1\rightarrow\Div^d$ to be an isomorphism? Note that, by \ref{prop:symdivdr}, we already know that this is true after base changing to the locus in $\Div^d$ where the degree $d$ divisor neither vanishes at $0$ nor at $\infty$ since this open substack of $\Div^d$ is isomorphic to its own analytic de Rham stack (which in turn can be seen using \cref{lem:charts}). However, this locus is slightly too small for our purposes.

\begin{defi}
For any $d\geq 1$, we define $\Div^d_{\HT\leq 1}$ to be the stack sending a totally disconnected $\C$-algebra $A$ to the anima of pairs $(\cal{L}, s)$ of a degree $d$ line bundle $\cal{L}$ on $X_{\C, A}$ and a global section $s$ of $\cal{L}$ such that the order of vanishing of $s$ at $0$ and at $\infty$ is at most $1$ after pullback along every map $A\rightarrow\C$.
\end{defi}

\begin{rem}
The notation $\Div^d_{\HT\leq 1}$ is meant to evoke the fact that the preimage of $\Div^d_{\HT\leq 1}\subseteq\Div^d$ under the multiplication map are exactly those $d$-tuples of degree $1$ divisors for which at most one lies ``at the Hodge--Tate point'' of $\Div^1$.
\end{rem}

Note that $\Div^d_{\HT\leq 1}$ is an open substack of $\Div^d$ and that it is equal to all of $\Div^d$ if $d=1$. Moreover, being contained in $\Div^d_{\HT\leq 1}$ can be checked on the level of analytic de Rham stacks, i.e.\ the diagram
\begin{equation*}
\begin{tikzcd}
    \Div^d_{\HT\leq 1}\ar[r]\ar[d] & \Div^d\ar[d] \\
    (\Div^d_{\HT\leq 1})^{\an, \dR}\ar[r] & (\Div^d)^{\an, \dR}
\end{tikzcd}
\end{equation*}
is cartesian. Let us first observe the following analogue of \cref{lem:charts}:

\begin{lem}
\label{lem:chartshtleq1}
For $d\geq 3$, the stack $\Div^d_{\HT\leq 1}$ has an open cover given by
\begin{equation*}
\begin{split}
    \text{$2$ copies of }&\mathbb{A}^{1,\an}/\mathbb{G}_{m}^{\dagger}\times\C^{\times, \Betti}\times \C^{d-2, \Betti}\hspace{0.3cm}\text{,}\hspace{0.3cm}\text{$1$ copy of }\C^{\times, \Betti}\times \C^{d-1, \Betti} \\
    &\text{and $1$ copy of }\A^{2, \an}/(\G_m^\dagger)^2\times \C^{\times, \Betti}\times \C^{d-3, \Betti}\;.
\end{split}
\end{equation*}
If $d=2$, we have to replace the last piece of the cover above by
\begin{equation*}
    \A^{2, \an}/(\G_m^\dagger)^2\;.
\end{equation*}
\end{lem}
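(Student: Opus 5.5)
The plan is to run the argument of \cref{lem:charts} inside the open substack $\Div^d_{\HT\leq 1}\subseteq\Div^d$. Using \cref{prop:divd} and \cref{prop:bcspaces}, a point of $\Div^d$ is, locally on $A$, a polynomial $\sum_i c_i x^iy^{d-i}$ with $c_0,c_d\in A$ (the $\A^{2,\an}$-coordinates) and $c_1,\dots,c_{d-1}$ continuous functions on $S=\Hom(A,\C)$ (the $\C^{d-1,\Betti}$-coordinates), modulo $\C^{\times,\la}$. The order of vanishing at $0=[0:1]$ is at most $1$ exactly when $c_0\neq0$ or $c_1\neq0$ fibrewise. Similarly, the order of vanishing at $\infty$ is at most $1$ exactly when $c_d\neq 0$ or $c_{d-1}\neq 0$. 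Hence
\begin{equation*}
\Div^d_{\HT\leq 1}=\bigl(\{c_0\neq0\}\cup\{c_1\neq0\}\bigr)\cap\bigl(\{c_d\neq0\}\cup\{c_{d-1}\neq 0\}\bigr)\;,
\end{equation*}
which is covered by the four opens $U_{0,d}$, $U_{0,d-1}$, $U_{1,d}$ and $U_{1,d-1}$, where $U_{i,j}=\{c_i\neq0,\,c_j\neq0\}$. All of these are open in $\Div^d$ because the conditions are open on the analytic de Rham stack, which is compatible with the cartesian square noted before the lemma.

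Next I identify each piece using \cref{exact sequences}, arguing as in \cref{lem:charts}.

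\textbf{The piece $U_{1,d-1}$ for $d\geq3$.} Here $c_0,c_d\in\A^{2,\an}$ are free, $c_1,c_{d-1}\in\C^{\times,\Betti}$, and the remaining $c_2,\dots,c_{d-2}$ lie in $\C^{d-3,\Betti}$. I use $c_1$ to trivialise the $\C^{\times,\la}$-action: the quotient of $\C^{\times,\Betti}$ by $\C^{\times,\la}$ acting through $\C^{\times,\la}\to\C^{\times,\Betti}$ is $*/(\G_m^\dagger)^2$ by \cref{exact sequences}. The group $(\G_m^\dagger)^2$ then acts on $\A^{2,\an}$ and trivially on the Betti coordinates. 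This gives $\A^{2,\an}/(\G_m^\dagger)^2\times\C^{\times,\Betti}\times\C^{d-3,\Betti}$, where the leftover $\C^{\times,\Betti}$ is $c_{d-1}$.

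\textbf{The piece $U_{1,d-1}$ for $d=2$.} Now $c_1=c_{d-1}$ is a single coordinate, so nothing is left over and the piece is $\A^{2,\an}/(\G_m^\dagger)^2$, matching \cref{lem:charts}.

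\textbf{The pieces $U_{0,d-1}$ and $U_{1,d}$.} Take $U_{0,d-1}$: here $c_0\in\G_m^\an$, $c_d\in\A^{1,\an}$, $c_{d-1}\in\C^{\times,\Betti}$, and the other Betti coordinates form $\C^{d-2,\Betti}$. Using the first isomorphism $\G_m^{\an}/\C^{\times,\la}\cong */\G_m^\dagger$ from \cref{exact sequences} to kill $c_0$, I get $\A^{1,\an}/\G_m^\dagger\times\C^{\times,\Betti}\times\C^{d-2,\Betti}$. The piece $U_{1,d}$ is symmetric.

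\textbf{The piece $U_{0,d}$.} Here $(c_0,c_d)\in\G_m^{\an,2}$, and the second isomorphism of \cref{exact sequences} gives $\G_m^{\an,2}/\C^{\times,\la}\cong\C^{\times,\Betti}$. So the piece is $\C^{\times,\Betti}\times\C^{d-1,\Betti}$.

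The main point needing care is that these quotient identifications are compatible with the product with the Betti factors. This should hold because $\C^{\times,\la}$ acts trivially on the coordinates $c_1,\dots,c_{d-1}$: scaling a Betti coordinate by $\lambda\in\C^{\times,\la}$ only depends on the image of $\lambda$ in $\C^{\times,\Betti}$. For $U_{1,d-1}$, where the action is trivialised using a Betti coordinate, I will check this by writing the quotient as $\bigl(\A^{2,\an}\times\C^{\times,\Betti}\bigr)/\C^{\times,\la}$ and applying the exact sequence of \cref{exact sequences} directly, exactly as in the middle case of \cref{lem:charts}.
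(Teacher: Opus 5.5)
Your proof is correct and follows the paper's argument: it uses the same four opens, cut out by ($c_0\neq0$ or $c_1\neq0$) and ($c_d\neq0$ or $c_{d-1}\neq0$), and identifies them via the two isomorphisms and the exact sequence of \cref{exact sequences}. One correction to your last paragraph: $\C^{\times,\la}$ does not act trivially on $c_1,\dots,c_{d-1}$, since it scales them through $\C^{\times,\la}\to\C^{\times,\Betti}$. The product splittings hold for a different reason: the stabilisers of your slices ($c_0=1$ or $c_1=1$), namely $\G_m^\dagger$ or $(\G_m^\dagger)^2$, lie in the kernel of that map and hence act trivially on the Betti coordinates; for $U_{0,d}$, slice at $c_0=1$ and use $\G_m^\an/\G_m^\dagger\cong\C^{\times,\Betti}$ from \cref{anriemannhilb}.
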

\begin{proof}
We use the presentation of $\Div^d$ from \cref{prop:divd} and the notation from the proof of \cref{lem:charts}. Then note that being contained in $\Div^d_{\HT\leq 1}$ amounts to the condition that at least one of $c_0, c_1$ and at least one of $c_d, c_{d-1}$ has to be a unit. Note that this automatically implies that $c_i$ is invertible for at least one $i$.

If $c_0, c_{d-1}\neq 0$, we obtain
\begin{equation*}
    (\G_m^\an\times \A^{1, \an}\times \C^{\times, \Betti}\times \C^{d-2, \Betti})\,/\,\C^{\times, \la}\cong \A^{1, \an}/\G_m^\dagger\times\C^{\times, \Betti}\times \C^{d-2, \Betti}
\end{equation*}
by the first canonical isomorphism of \cref{exact sequences} and similarly if $c_1, c_d\neq 0$. If $c_0, c_d\neq 0$, we get
\begin{equation*}
    (\G_m^{2, \an}\times \C^{d-1, \Betti})\,/\,\C^{\times, \la}\cong \C^{\times, \Betti}\times \C^{d-1, \Betti}
\end{equation*}
by the second canonical isomorphism of \cref{exact sequences}. Finally, if $c_1, c_{d-1}\neq 0$ and $d\geq 3$, we obtain
\begin{equation*}
    (\A^{2, \an}\times (\C^{\times, \Betti})^2\times \C^{d-3, \Betti})\,/\,\C^{\times, \la}\cong \A^{2, \an}/(\G_m^\dagger)^2\times \C^{\times, \Betti}\times \C^{d-3, \Betti}
\end{equation*}
using the exact sequence of \cref{exact sequences}. In the case $d=2$, the two conditions $c_1\neq 0$ and $c_{d-1}\neq 0$ are the same, in which case we instead get
\begin{equation*}
    (\A^{2, \an}\times \C^{\times, \Betti})\,/\,\C^{\times, \la}\cong \A^{2, \an}/(\G_m^\dagger)^2
\end{equation*}
using \cref{exact sequences} once again.
\end{proof}

Now we can state our desired result relating $\Div^1$ and $\Div^d$.

\begin{prop}
\label{prop:symdisohtleq1}
The map 
\begin{equation*}
    \Sym^d\Div^1\rightarrow\Div^d
\end{equation*}
induced by the multiplication map is an isomorphism over $\Div^d_{\HT\leq 1}\subseteq\Div^d$.
\end{prop}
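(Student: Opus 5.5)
The plan is to compare both sides with their analytic de Rham stacks and argue locally on the Betti base. By \cref{prop:symdivdr} the map $\Sym^d\Div^1\to\Div^d$ is an isomorphism after passing to analytic de Rham stacks. We also know that $\Div^d_{\HT\leq 1}\to\Div^d$ is the base change of the corresponding open inclusion of de Rham stacks. It therefore suffices to show that the square
\begin{equation*}
\begin{tikzcd}
    \Sym^d\Div^1\times_{\Div^d}\Div^d_{\HT\leq 1}\ar[r]\ar[d] & \Div^d_{\HT\leq 1}\ar[d] \\
    (\Sym^d\Div^1)^{\an,\dR}\times_{(\Div^d)^{\an,\dR}}(\Div^d_{\HT\leq 1})^{\an,\dR}\ar[r, "\cong"] & (\Div^d_{\HT\leq 1})^{\an,\dR}
\end{tikzcd}
\end{equation*}
is cartesian. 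Equivalently, the map induces isomorphisms on fibres over the Betti stack $(\Div^d_{\HT\leq 1})^{\an,\dR}$, which is an open subspace of $(\C\P^d)^\Betti$ by \cref{cor:symbetti}. This question is local on that Betti base, so I would cover it by small opens $U$ adapted to the configuration of roots.

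\textbf{Step 1 (localising on the Betti base).} Over a small open $U\subseteq\Sym^d\C\P^1$ of configurations in the $\HT\leq1$ locus, the $d$ points split into at most one point near $0$, at most one point near $\infty$, and the remaining points lying in a region $W$ bounded away from $0$ and $\infty$. The root clusters near $0$ and near $\infty$ have size at most one, and this is exactly where the $\HT\leq 1$ condition enters. Over such $U$, \cref{lem:symdcompacthausdorff} and \cref{cor:symbetti} give a splitting of $\Sym^d\Div^1|_U$ as
\begin{equation*}
\Div^1|_{V_0}\times\Div^1|_{V_\infty}\times\Sym^{d-2}(\Div^1|_W)
\end{equation*}
(with the obvious modifications if fewer than two points lie near $0$ and $\infty$). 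Here $\Div^1|_W$ coincides with its own analytic de Rham stack, since it lies in the chart of \cref{lem:charts} where both coefficients are units. Hence the last factor is simply the Betti stack $(\Sym^{d-2}W)^\Betti$.

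\textbf{Step 2 (the same splitting for $\Div^d$).} I would show that over $U$, the stack $\Div^d$ decomposes compatibly. An $A$-point over $U$ is, after localising as in the proof of \cref{lem:prodsurjects}, a tuple $(c_0,\dots,c_d)$ with $c_1,\dots,c_{d-1}\in\ol{A}$ and $c_0,c_d\in A$. Its reduction factors in $\ol{A}[x,y]$ as $\ell_0\cdot\ell_\infty\cdot g$, where $\ell_0$ and $\ell_\infty$ are linear forms and $g$ has unit leading and constant coefficients. The factorisation of $c_0$ and $c_d$ must then be lifted to $A$. Because only one linear factor is near $0$, the lift of $c_0=b_0\cdot(\text{unit})$ is forced: the other factors have unit $y$-coefficients, so the $\dagger$-nilpotent correction is absorbed uniquely, as in the section $t_2\mapsto -ra_1^{-1}$ of \cref{lem:prodsurjects}. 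The same holds at $\infty$. This identifies $\Div^d|_U$ with the same product as in Step 1, compatibly with the multiplication map. The identification is to be verified on the explicit charts of \cref{lem:chartshtleq1}, using \cref{exact sequences} to rewrite the $\C^{\times,\la}$-quotients.

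\textbf{Main obstacle.} Surjectivity is already provided by \cref{lem:prodsurjects}. The hard part is injectivity, i.e.\ that the lift of the factorisation is unique up to the $\C^{\times,\la}$-actions and permutations, so that no phenomenon like \cref{exx:failure} occurs. The key point is that \cref{exx:failure} requires two factors that are simultaneously non-units at the same Hodge--Tate point, since it uses $x\cdot x$. In the $\HT\leq1$ locus, at each of $0,\infty$ at most one factor is non-invertible, so any $\dagger$-nilpotent ambiguity $(a_1+t_1)(a_2+t_2)=a_1a_2$ with $a_2$ a unit forces $t_2=-t_1a_2a_1^{-1}$ up to $\C^{\times,\la}$-rescaling. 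I would check this carefully in the universal case over the chart $\A^{2,\an}/(\G_m^\dagger)^2$, which is the only chart where both $0$ and $\infty$ carry a non-unit coefficient.
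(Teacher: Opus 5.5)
Your proposal is correct and follows essentially the paper's argument. The paper likewise uses \cref{lem:prodsurjects} and \cref{prop:symdivdr} to reduce to analysing lifts of a fixed $\ol{A}$-point. It then uses exactly your key observation: over $\Div^d_{\HT\leq 1}$, all but one $a_i$ and all but one $b_i$ are units, so the lifts of the two remaining factors are forced up to $\G_m^\dagger$ by dividing the products by the uniquely lifted unit factors. Your localisation on the Betti base and product splitting is a more explicit form of the paper's ``after possibly localising, fix an $\ol{A}$-point''.

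One small slip: with $a_2$ a unit you should solve for $t_1$, not $t_2$, i.e.\ $t_1=-a_1t_2(a_2+t_2)^{-1}$, so that $(a_1+t_1,a_2+t_2)=(a_1u^{-1},a_2u)$ with $u=1+t_2a_2^{-1}\in\G_m^\dagger$. Your formula instead divides by the possibly non-invertible $a_1$.
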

\begin{proof}
By \cref{lem:prodsurjects}, we already know that the map is surjective so it remains to prove that it is also injective, i.e.\ we have to check that the diagonal map
\begin{equation}
\label{eq:geomdivd-diagonal}
    \Sym^d\Div^1\times_{\Div^d} \Div^d_{\HT\leq 1}\rightarrow (\Sym^d\Div^1\times_{\Div^d} \Sym^d\Div^1)\times_{\Div^d} \Div^d_{\HT\leq 1}
\end{equation}
is an isomorphism. Note that we already know this after taking de Rham stacks by \cref{prop:symdivdr} and hence, after possibly localising, we may fix an $\ol{A}$-point of the source, which we may assume to come from an $\ol{A}$-point $((\ol{a}_0, \ol{b}_0), \dots, (\ol{a}_d, \ol{b}_d))$ of $\BC(\O(1))^d$ after localising further. As we are working over $\Div^d_{\HT\leq 1}$, we may assume that all but one of the $\ol{a}_i$ and all but one of the $\ol{b}_i$ are invertible in $\ol{A}$.

Let $\ol{a}_k$ and $\ol{b}_\ell$ be the two possibly noninvertible elements. Note that, up to the multiplication action of $\G_m^\dagger$, all $\ol{a}_i, \ol{b}_i$ except $\ol{a}_k$ and $\ol{b}_\ell$ lift uniquely from $\ol{A}$ to $A$, and we will denote these lifts by $a_i, i\neq k$ and $b_i, i\neq\ell$. Then, by \cref{lem:charts}, up to the action of $\G_m^\dagger$, lifting the given $\ol{A}$-point of $\Sym^d\Div^1$ to an $A$-point of the right-hand side of (\ref{eq:geomdivd-diagonal}) amounts to lifting $a_k$ and $b_\ell$ from $\ol{A}$ to $A$ in possibly two different ways, say to $\tilde{a}_k, \tilde{b}_\ell$ and $\tilde{a}_k', \tilde{b}_\ell'$, and giving an identification of the two induced $A$-points of $\Div^d$. Such an identification in turn amounts to an identification
\begin{equation*}
    \widetilde{a}_k\prod_{i\neq k} a_i=\widetilde{a}'_k\prod_{i\neq k} a_i
\end{equation*}
up to the action of $\G_m^\dagger$ on both sides, and similarly for the $b_i$. However, by assumption, the $a_i, i\neq k$ are invertible and hence the above identification amounts to an identification of the lifts $\tilde{a}_k$ and $\tilde{a}_k'$ up to the action of $\G_m^\dagger$; analogously, the corresponding identification involving the $b_i$ translates into an identification of the lifts $\tilde{b}_\ell$ and $\tilde{b}_\ell'$ up to the action of $\G_m^\dagger$.

However, this precisely means that the diagonal map (\ref{eq:geomdivd-diagonal}) is an isomorphism over the $\ol{A}$-point of $\Sym^d\Div^1$ we fixed in the beginning, and this concludes the proof.
\end{proof}

\subsection{Extending line bundles from $\Div^d_{\HT\leq 1}$ to $\Div^d$}

As announced above, our next goal is to show that line bundles extend uniquely along $\Div^d_{\HT\leq 1}\subseteq \Div^d$. In fact, we will show a slight strengthening, but we still begin by proving the following assertion:

\begin{lem}
\label{lem:linebundlesextend}
Any line bundle on $\Div^d_{\HT\leq 1}$ extends uniquely to $\Div^d$ up to isomorphism.
\end{lem}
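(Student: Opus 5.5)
The plan is to argue chart by chart, using the open cover of $\Div^d$ from \cref{lem:charts} and comparing it with the cover of $\Div^d_{\HT\leq 1}$ from \cref{lem:chartshtleq1}. The complement of $\Div^d_{\HT\leq 1}$ is the locus where both $c_0=c_1=0$ or both $c_d=c_{d-1}=0$. It therefore only meets charts in which at least two of the relevant coordinates are allowed to vanish simultaneously. On the charts $\{c_0\neq 0\}$ and $\{c_d\neq 0\}$, which are of the form $\A^{1,\an}/\G_m^\dagger\times\C^{d-1,\Betti}$, the bad locus is cut out inside the Betti factor $\C^{d-1,\Betti}$ by the vanishing of the coordinates $c_{d-1},c_d$ (respectively $c_0,c_1$). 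Here $c_{d-1}$ is Betti, while $c_d$ lives on the $\A^{1,\an}/\G_m^\dagger$ factor. On the middle charts $\{c_i\neq 0\}$, $1\le i\le d-1$, of the form $\A^{2,\an}/(\G_m^\dagger)^2\times\C^{d-2,\Betti}$, the bad locus is given by one analytic coordinate and one Betti coordinate vanishing, or, in the extreme cases $i=1$ or $i=d-1$, possibly a mixture.

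For each chart, I would first establish a local statement: restriction of line bundles from $Y\times(\C^{n,\Betti}\setminus Z)$ to $Y\times\C^{n,\Betti}$ is an equivalence on Picard groupoids, or at least bijective on isomorphism classes, whenever the removed locus has "codimension two" in the appropriate mixed sense. For the Betti directions this follows from \cref{lem:picbetti}. Line bundles on $X\times S^\Betti$ are maps $|S|\to\cal{P}\mathrm{ic}(X)$. Removing a real codimension $\geq 3$ subset such as $\{c_{d-1}=c_d=0\}\subset\C^2$, or more generally a complex codimension $\ge 2$ linear subspace, does not change $|S|$ up to the relevant truncation. Since $\cal{P}\mathrm{ic}$ is a $1$-truncated (grouplike) anima, only the $2$-type of $|S|$ matters, and $\C^n$ minus a complex codimension $2$ subspace is $2$-connected.

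For the mixed directions, I would reduce to $\A^{1,\an}/\G_m^\dagger\times\C^{\Betti}$ with the origin $\{0\}\times\{0\}$ removed. I would compute the Picard groupoid of $\A^{1,\an}/\G_m^\dagger$ directly via \cref{Findimrep}-style arguments, namely $\G_m^\dagger$-equivariant line bundles on $\A^{1,\an}$. I would then glue: the punctured space is covered by $\G_m^{\an}/\G_m^\dagger\times\C^\Betti\cong */\G_m^\dagger\times... $ (using \cref{exact sequences}) and $\A^{1,\an}/\G_m^\dagger\times\C^{\times,\Betti}$, glued along $(\C^{\times})^{2,\Betti}$-type pieces. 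A Mayer--Vietoris computation of $\pi_0,\pi_1$ of the Picard groupoids should show that restriction is an isomorphism. The intuition is that the loop around $c_{d-1}=0$ in the Betti factor becomes contractible once $c_d$ is allowed to vary through $0$, exactly as for $\C^2\setminus\{0\}$.

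Finally, I would glue the local extensions. Uniqueness up to isomorphism on each chart, together with full faithfulness of restriction on overlaps (which holds by the same computation one level down), ensures that the local extensions glue to a line bundle on all of $\Div^d$ and that it is unique up to isomorphism. The main obstacle I expect is the mixed analytic–Betti Mayer--Vietoris step. Concretely, one must control $\cal{P}\mathrm{ic}$ of $\A^{1,\an}/\G_m^\dagger$ and its restriction to $\G_m^\an/\G_m^\dagger$, including the units $\pi_1$, since only then does the "codimension two" heuristic become a genuine computation. My first attempt there would be to identify both with equivariant data, namely $\G_m^\dagger$-characters twisted by monodromy, and check the gluing explicitly.
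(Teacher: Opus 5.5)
Your route is genuinely different from the paper's and can be made to work, but it needs a few corrections. The paper proves no local Hartogs-type statement; instead it classifies line bundles on both stacks globally.

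\begin{itemize}
\item By \cref{lem:charts} and \cref{lem:picbetti}, a line bundle on $\Div^d$ is a line bundle on $\A^{2,\an}/(\G_m^\dagger)^2$ whose two monodromies along $\G_m^{2,\an}/(\G_m^\dagger)^2\cong(\C^{\times,\Betti})^2$ agree.
\item For $\Div^d_{\HT\leq 1}$ the paper uses the charts of \cref{lem:chartshtleq1}. The monodromies from the two $\A^{1,\an}/\G_m^\dagger$-charts agree, so after twisting by a line bundle pulled back from $\Div^d$ both are trivial.
\item The proof of \cite[Prop.\ VII.2.2]{RealLLC} then identifies the restriction to $\A^{2,\an}/(\G_m^\dagger)^2\times\C^{\times,\Betti}$ with a $\Z^2$-filtered line plus an automorphism. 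The other charts force that automorphism to be trivial.
\end{itemize}

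That argument is short and only sees isomorphism classes; full faithfulness is handled separately in \cref{prop:mappingspaces}. Your approach instead shows that $\mathcal{P}\mathrm{ic}(V)\to\mathcal{P}\mathrm{ic}(V\cap\Div^d_{\HT\leq1})$ is an equivalence for every chart $V$ of \cref{lem:charts} and every finite intersection of charts, and then concludes by descent. This bypasses \cref{lem:chartshtleq1} and yields \cref{prop:mappingspaces} at the same time. The price is computing $\pi_0$ and $\pi_1$ of Picard groupoids on many more pieces, for both stacks.

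There are three things to fix.

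(i) The purely Betti case never arises. Each component $\{c_0=c_1=0\}$ or $\{c_{d-1}=c_d=0\}$ of the complement involves $c_0$ or $c_d$, which are the $\A^{1,\an}$-coordinates of $\BC(\mathcal{O}(d))$. Your example $\{c_{d-1}=c_d=0\}$ is mixed, so the $S^3$-connectivity argument is never used and everything rests on the mixed step.

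(ii) In the mixed step, $\G_m^\an/\G_m^\dagger\cong\C^{\times,\Betti}$ by \cref{anriemannhilb}, not $*/\G_m^\dagger$. The stack $*/\G_m^\dagger$ is $\G_m^\an/\C^{\times,\la}$ from \cref{exact sequences}. With $U_1=\C^{\times,\Betti}\times\C^\Betti$, $U_2=\A^{1,\an}/\G_m^\dagger\times\C^{\times,\Betti}$ and $U_{12}=(\C^{\times,\Betti})^2$, Mayer--Vietoris does give an equivalence. However, the mechanism is not simple connectivity as for $\C^2\setminus\{0\}$. $U_1$ kills the monodromy around the Betti coordinate, but the monodromy around $c_d=0$ survives, because it is already part of the data of a line bundle on $\A^{1,\an}/\G_m^\dagger$. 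So you really need $\mathcal{P}\mathrm{ic}(\A^{1,\an}/\G_m^\dagger)$, together with its restriction to $\G_m^\an/\G_m^\dagger$ and its units. \cref{Findimrep} does not cover this stack. The paper's input here is \cite[Prop.\ VII.2.2]{RealLLC} combined with the $\C^{\times,\Betti}$-torsor $\A^{1,\an}/\G_m^\dagger\to\A^{1,\an}/\G_m^\an$.

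(iii) The charts $\{c_i\neq 0\}$ with $1\le i\le d-1$ carry an $\A^{2,\an}/(\G_m^\dagger)^2$-factor, and for $2\le i\le d-2$ both mixed components must be removed. You therefore need the mixed computation relative to an extra $\A^{1,\an}/\G_m^\dagger$-factor, i.e.\ the $\Z^2$-filtered variant that the paper also uses.
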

\begin{proof}
We first note that, by \cref{lem:charts} and \cref{lem:picbetti}, line bundles on $\Div^d$ identify with line bundles on $\A^{2, \an}/(\G_m^\dagger)^2$ such that the two monodromies obtained by pulling back along
\begin{equation*}
    \C^{\times, \Betti}\times\C^{\times, \Betti}\cong \G_m^{2, \an}/(\G_m^\dagger)^2\rightarrow \A^{2, \an}/(\G_m^\dagger)^2
\end{equation*}
are the same. Indeed, \cref{lem:picbetti} lets us ignore $\C^\Betti$-factors in the description of the charts and the compatibility of the monodromies is forced by the fact that the two copies of $\A^{1, \an}/\G_m^\dagger$ are glued along $\C^{\times, \Betti}$.

Thus, we have to show that line bundles on $\Div^d_{\HT\leq 1}$ admit the same description, for which we use \cref{lem:chartshtleq1} and \cref{lem:picbetti}. Taking any line bundle $\cal{L}$ on $\Div^d_{\HT\leq 1}$, the same argument as in the case of $\Div^d$ first shows that the two monodromies obtained from the two $\A^{1, \an}/\G_m^\dagger$-factors have to agree. Twisting by the pullback of a suitable line bundle along the inclusion $\Div^d_{\HT\leq 1}\rightarrow\Div^d$, we may thus assume that both of these monodromies are trivial. The proof of \cite[Prop.\ VII.2.2]{RealLLC} then shows that the restriction of $\cal{L}$ to the chart $\A^{2, \an}/(\G_m^\dagger)^2\times\C^{\times, \Betti}$ corresponds to a one-dimensional $\Z^2$-filtered $\C$-vector space together with an automorphism; looking at the other charts, we see that $\cal{L}$ is determined by this data and that the automorphism is trivialised after forgetting the filtration. However, then the automorphism must be trivial to begin with and hence $\cal{L}$ corresponds to a line bundle on $\A^{2, \an}/(\G_m^\dagger)^2$, as desired.
\end{proof}

Now we are in position to prove the following strengthening of \cref{lem:linebundlesextend}:

\begin{prop}
\label{prop:mappingspaces}
Pullback along the inclusion $\Div^d_{\HT\leq 1}\subseteq\Div^d$ induces an equivalence of Picard anima, i.e.\ we have
\begin{equation*}
    \Map(\Div^d, */\G_m^\an)\xrightarrow{\cong} \Map(\Div^d_{\HT\leq 1}, */\G_m^\an)
\end{equation*}
via the natural map.
\end{prop}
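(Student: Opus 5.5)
Since $*/\G_m^\an$ is a $1$-truncated stack, the anima $\Map(-, */\G_m^\an)$ is the Picard groupoid, so it is $1$-truncated. The claim therefore splits into two parts. On $\pi_0$, restriction must be a bijection on isomorphism classes of line bundles, which is exactly \cref{lem:linebundlesextend}. On $\pi_1$, for every line bundle $\cal{L}$ on $\Div^d$, the restriction map
\begin{equation*}
    \mathrm{Aut}(\cal{L})\rightarrow \mathrm{Aut}(\cal{L}|_{\Div^d_{\HT\leq 1}})
\end{equation*}
must be an isomorphism. Both automorphism groups are the unit groups of $\Gamma(-,\O)$, because $\mathcal{E}\mathrm{nd}(\cal{L})\cong\O$. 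It therefore suffices to show that
\begin{equation*}
    \Gamma(\Div^d,\O)\rightarrow\Gamma(\Div^d_{\HT\leq 1},\O)
\end{equation*}
is an isomorphism of (static) rings. Note also that the $\pi_2$-terms both vanish, since the mapping spaces are $1$-truncated, and that the induced map on $\pi_1$ does not depend on the basepoint once it is identified with units of global functions.

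To compute global functions, I would use the charts of \cref{lem:charts} and \cref{lem:chartshtleq1} together with \cref{lem:picbetti}, or rather its $\D$-level version \cref{BasicBetti}. On a product with $\C^{n,\Betti}$ or $\C^{\times,\Betti}$, the ring $\Gamma(-,\O)$ in degree $0$ is functions on the other factor that are locally constant along the Betti factor. The $\C^{n,\Betti}$ factors are connected, so they can be dropped, and $\C^{\times,\Betti}$ contributes no new global sections in $H^0$. Next, $\Gamma(\A^{1,\an}/\G_m^\dagger,\O)$ consists of the $\G_m^\dagger$-invariant entire functions. Such a function is invariant under the germ of scaling, hence constant along orbits, and since the orbit of $0$ is closed in the closure of the other orbits, it is constant: the invariants are $\C$. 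The same argument gives $\Gamma(\A^{2,\an}/(\G_m^\dagger)^2,\O)=\C$.

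The pieces then glue along overlaps that are products of Betti spaces (for example $\C^{\times,\Betti}\times\C^{d-1,\Betti}$), whose $H^0$ of $\O$ is also $\C$. I would conclude from the \v{C}ech description, in degree $0$, that both $\Gamma(\Div^d,\O)$ and $\Gamma(\Div^d_{\HT\leq 1},\O)$ are $\C$, with restriction the identity. This gives the isomorphism on $\pi_1$, and together with \cref{lem:linebundlesextend} it proves the proposition.

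The main obstacle is making the invariant-function computation rigorous in the analytic-stack setting. One has to identify $\Gamma(\A^{1,\an}/\G_m^\dagger,\O)$ with $H^0$ of $\G_m^\dagger$-equivariant functions on $\A^{1,\an}$. The cleanest route is to use the description from the proof of \cite[Prop.\ VII.2.2]{RealLLC}, already invoked in \cref{lem:linebundlesextend}, which identifies quasi-coherent sheaves on $\A^{1,\an}/\G_m^\dagger$ with filtered vector spaces. Under that identification the structure sheaf is $\C$ with the trivial filtration, so its $H^0$ is $\C$. The $\A^{2,\an}/(\G_m^\dagger)^2$ chart is handled in the same way with $\Z^2$-filtrations.
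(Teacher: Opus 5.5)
Your skeleton is the paper's: reduce to $1$-truncated anima, obtain $\pi_0$ from \cref{lem:linebundlesextend}, and obtain $\pi_1$ from $H^0(-,\mathcal{O})=\C$ on the charts of \cref{lem:charts} and \cref{lem:chartshtleq1}. The gap is your opening sentence, because $*/\G_m^\an$ is \emph{not} a $1$-truncated stack in this setting. The test objects (totally disconnected algebras, and analytic rings generally) are animated, and for non-static $A$ the anima $\G_m^\an(A)$ acquires higher homotopy from $\pi_{\geq 1}A$. Accordingly, for a line bundle $\mathcal{L}$ on a stack $X$ one has $\pi_i(\mathrm{Map}(X,*/\G_m^\an),\mathcal{L})\cong H^{1-i}(X,\mathcal{O})$ for $i\geq 2$. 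So $1$-truncatedness of the two Picard anima is a genuine property of $\Div^d$ and $\Div^d_{\HT\leq 1}$, namely coconnectivity of $R\Gamma(-,\mathcal{O})$, and your degree-$0$ computation says nothing about $\pi_{\geq 2}$. The same issue already affects your $\pi_1$ step. By \cref{BasicBetti}, $H^0(Y\times\C^{\times,\Betti},\mathcal{O})\cong H^0(Y,\mathcal{O})\oplus H^{-1}(Y,\mathcal{O})$. Hence your claim that $\C^{\times,\Betti}$-factors contribute no new global sections presupposes $H^{-1}(Y,\mathcal{O})=0$ for $Y=\A^{1,\an}/\G_m^\dagger$ and $Y=\A^{2,\an}/(\G_m^\dagger)^2$.

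The missing input is exactly the first paragraph of the paper's proof. Since $1$-truncated anima are closed under limits, it suffices to treat the charts. By \cref{lem:picbetti} the Betti factors are harmless, because $\mathrm{Map}(|S|,P)$ is $1$-truncated whenever $P$ is. Writing $\A^{1,\an}/\G_m^\dagger\cong\colim_{\Delta}\A^{1,\an}\times(\G_m^\dagger)^\bullet$ and $\A^{1,\an}=\colim_N\ol{\mathbb{D}}(N)$ then reduces everything to the affine stacks $\ol{\mathbb{D}}(N)\times(\G_m^\dagger)^\bullet$, whose rings of functions are static; the same works for $\A^{2,\an}/(\G_m^\dagger)^2$. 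With this added, the rest of your argument goes through.

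Two smaller points. First, your direct computation of $H^0(\A^{1,\an}/\G_m^\dagger,\mathcal{O})$ is not really an obstacle. $H^0$ of the totalisation of the cosimplicial ring $\mathcal{O}(\A^{1,\an}\times(\G_m^\dagger)^\bullet)$ is the equaliser of $f(z)$ and $f(\lambda z)$, and comparing Taylor coefficients gives $\C$. This is a more hands-on alternative to the paper's route, which passes through the $\C^{\times,\Betti}$-torsor $\A^{1,\an}/\G_m^\dagger\to\A^{1,\an}/\G_m^\an$ and reads off $H^0(\A^{1,\an}/\G_m^\an,\mathcal{O})=\C$ from \cite[Prop.\ VII.2.2]{RealLLC}. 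Second, not every overlap is a product of Betti stacks; for instance, $\{c_0\neq 0\}\cap\{c_1\neq 0\}$ retains an $\A^{1,\an}/\G_m^\dagger$ factor. For $H^0$, however, you only need the overlaps to be nonempty.
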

\begin{proof}
We first observe that both mapping anima are $1$-truncated. Indeed, for the left-hand side, it suffices to show that the Picard anima of each of the charts from \cref{lem:charts} is $1$-truncated as the inclusion of $1$-truncated anima into all anima is a left-adjoint and hence commutes with limits. By contractibility of $\C$ and \cref{lem:picbetti}, this then reduces to showing that $\Map(\A^{2, \an}/(\G_m^\dagger)^2, */\G_m^\an)$ and $\Map(\A^{1, \an}/\G_m^\dagger, */\G_m^\an)$ are $1$-truncated. We prove this for the second mapping space, the proof for the first one is similar. Then note that 
\begin{equation*}
    \A^{1, \an}/\G_m^\dagger\cong \colim_{\Delta} \A^{1, \an}\times (\G_m^\dagger)^\bullet
\end{equation*}
and hence we are reduced to showing that each $\Map(\A^{1, \an}\times (\G_m^\dagger)^\bullet, */\G_m^\an)$ is $1$-truncated by the same argument as before. Writing $\A^{1, \an}=\colim_N \ol{\mathbb{D}}(N)$ as a colimit of overconvergent disks, we may further reduce to checking that each $\Map(\ol{\mathbb{D}}(N)\times (\G_m^\dagger)^\bullet, */\G_m^\an)$ is $1$-truncated, but this is clear as $\ol{\mathbb{D}}(N)\times (\G_m^\dagger)^\bullet$ is an affine analytic stack whose ring of functions is static. Therefore, we conclude that $\Map(\Div^d, */\G_m^\an)$ is $1$-truncated and a similar argument using \cref{lem:chartshtleq1} in place of \cref{lem:charts} shows the same for $\Div^d_{\HT\leq 1}$ in place of $\Div^d$.

Thus, we are reduced to checking that line bundles extend uniquely from $\Div^d_{\HT\leq 1}$ to $\Div^d$ and that the same is true for their automorphisms. The first part follows from \cref{lem:linebundlesextend}, so take any line bundle $\cal{L}$ on $\Div^d$ and consider an automorphism of its pullback to $\Div^d_{\HT\leq 1}$. After twisting, we may reduce to $\cal{L}=\O$ and then it suffices to show that
\begin{equation*}
    H^0(\Div^d, \O)\cong H^0(\Div^d_{\HT\leq 1}, \O)
\end{equation*}
via the natural map. In turn, this follows once we can prove that the global sections of $\O$ on each of the charts from \cref{lem:charts} and \cref{lem:chartshtleq1} are given by $\C$. However, using \cref{lem:picbetti} and the fact that $\C^\times$ and $\C$ are both connected, this reduces to proving that
\begin{equation*}
    H^0(\A^{2, \an}/(\G_m^\dagger)^2, \O)\cong \C\cong H^0(\A^{1, \an}/\G_m^\dagger, \O)\;.
\end{equation*}
As $\A^{2, \an}/(\G_m^\dagger)^2\rightarrow \A^{2, \an}/\G_m^{2, \an}$ is a $(\C^{\times, \Betti})^2$-torsor while $\A^{1, \an}/\G_m^\dagger\rightarrow \A^{1, \an}/\G_m^\an$ is a $\C^{\times, \Betti}$-torsor, connectedness of $\C^\times$ and \cref{lem:picbetti} further reduce us to showing that
\begin{equation*}
    H^0(\A^{2, \an}/\G_m^{2, \an}, \O)\cong \C\cong H^0(\A^{1, \an}/\G_m^\an, \O)\;,
\end{equation*}
but this follows from the proof of \cite[Prop.\ VII.2.2]{RealLLC} and the variant of loc.\ cit.\ for $\A^{2, \an}/\G_m^{2, \an}$ and $\Z^2$-filtered vector spaces.
\end{proof}

\section{The Abel--Jacobi map}

\begin{defi}
    The \emph{Abel--Jacobi morphism} is defined as the map
    $$
    \AJ:\Div_E^1 \rightarrow \Pic_E^1
    $$
    sending $(\L,s)$ to $\L$, where we recall that $\Pic_E^1$ is the stack classifying degree $1$ line bundles on $X_E$.
\end{defi}

Recall that our goal is to prove the following result:

\begin{thm}
\label{thm:main}
The map $\AJ$ induces an equivalence
\begin{equation*}
    \Map(\Pic^1_E, */\G_m^\an)\xrightarrow{\cong} \Map(\Div^1_E, */\G_m^\an)
\end{equation*}
of mapping anima. In particular, any line bundle on $\Div^1_E$ is pulled back from $\Pic^1_E$.
\end{thm}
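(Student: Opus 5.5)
We first treat $E=\C$ and afterwards descend to $E=\R$. The idea is to assemble all degree $d$ Abel--Jacobi maps into one map ``at infinite level'' whose fibres are contractible. Let $\Div^{\geq 0}=\bigsqcup_{d\geq 0}\Div^d$ and $\Pic^{\geq 0}=\bigsqcup_d \Pic^d$, with $\AJ^{\geq 0}$ sending $(\L,s)\mapsto\L$. Both are monoids under tensor product. On each piece, the fibre of $\AJ^d$ over the point $\O(d)$ of $\Pic^d\cong */\C^{\times,\la}$ is $\BC(\O(d))\setminus\{0\}\cong \A^{2,\an}\times\C^{d-1,\Betti}\setminus\{0\}$ by \cref{prop:bcspaces}, and as $d\to\infty$ this becomes ``contractible''.

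Given a line bundle $\cal{L}$ on $\Div^1$, we produce a compatible system of line bundles $\cal{L}^{(d)}$ on $\Div^d$. On $(\Div^1)^d$ we take $\cal{L}^{\boxtimes d}$. This is $S_d$-equivariant, and it descends to the sheaf quotient $\Sym^d\Div^1$ over $\Div^d_{\HT\leq 1}$: the $S_d$-stabilisers act trivially there, since the only coincident points lie in the open part where the map to the de Rham stack is an isomorphism, so by \cref{lem:picbetti} the stabiliser acts trivially on stalks. By \cref{prop:symdisohtleq1} we then get a line bundle on $\Div^d_{\HT\leq 1}$, and by \cref{prop:mappingspaces} it extends uniquely, together with all of its automorphisms, to a line bundle $\cal{L}^{(d)}$ on $\Div^d$. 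The same uniqueness statement gives canonical isomorphisms
\begin{equation*}
m^*\cal{L}^{(d+e)}\cong\cal{L}^{(d)}\boxtimes\cal{L}^{(e)}
\end{equation*}
along $m:\Div^d\times\Div^e\to\Div^{d+e}$, since both sides agree on the open locus $\Div^{d}_{\HT\leq 1}\times\Div^e_{\HT\leq 1}$ after pulling back to $(\Div^1)^{d+e}$.

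Next we show that $\cal{L}^{(d)}$ descends along $\AJ^d:\Div^d\to\Pic^d$ up to a correction that becomes trivial at infinite level. Pulling back $\cal{L}^{(d)}$ to $\BC(\O(d))\setminus\{0\}$ and using \cref{lem:charts} and \cref{lem:picbetti}, the Picard anima of this punctured Banach--Colmez space is computed via $\Map(|S^{2d-1}|,\cal{P}\mathrm{ic})$-type terms glued with overconvergent charts. Its part beyond $\C^\times$-equivariant constant line bundles is concentrated in homotopical degree about $2d-2$. Hence, after passing to the colimit $\Div^\infty\coloneqq\colim_d\Div^d$ along the maps ``multiply by a fixed section of $\O(1)$'', taken over $\Pic^\infty=\colim_d\Pic^d\cong */\C^{\times,\la}$, the fibre becomes contractible. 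Pullback $\Map(\Pic^\infty,*/\G_m^\an)\to\Map(\Div^\infty,*/\G_m^\an)$ is then an equivalence by descent along the $!$-cover $*\to */\C^{\times,\la}$, which reduces the check to the fibre. The multiplicativity isomorphisms above make $(\cal{L}^{(d)}\tensor\cal{L}^{(1)}(x)^{-d})_d$ into a compatible system on $\Div^\infty$, so it descends to a line bundle on $\Pic^1$ whose pullback to $\Div^1$ recovers $\cal{L}$. The same argument applied to automorphisms, which are $H^0(\Div^1,\O)^\times=\C^\times$ computed as in the proof of \cref{prop:mappingspaces}, and to $\Pic^1$, where \cref{Findimrep} gives one-dimensional representations of $\C^\times$, yields full faithfulness, and hence the equivalence of mapping anima. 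Both sides are $1$-truncated, by the argument of \cref{prop:mappingspaces}.

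For $E=\R$, we use that $\Div^1_\R$ and $\Pic^1_\R$ are related to their $\C$-versions by Galois-type descent. Concretely, $\Pic^1_\C\to\Pic^1_\R$ and $\Div^1_\C\to\Div^1_\R$ are compatible base changes along $\AnSpec\C\to\AnSpec\R$, which is a descendable cover. Applying the $\C$-case on the Čech nerve, whose terms are disjoint unions of copies of the $\C$-situation, and passing to the limit gives the claim.

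The main obstacle will be the contractibility of the fibres at infinite level. There is no robust homotopy theory available here, so one must compute explicitly, via the charts of \cref{lem:charts}, that the Picard anima of $\colim_d(\BC(\O(d))\setminus\{0\})$ is trivial. The first thing to try is to reduce by \cref{lem:picbetti} to the Betti part $\colim_d(\C^{d+1}\setminus 0)\simeq S^\infty\simeq *$, and then to handle the overconvergent $\G_m^\dagger$-factors as in \cref{prop:mappingspaces}.
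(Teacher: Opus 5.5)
\paragraph{The gap: the choice of transition maps.} Your overall architecture is the paper's: build $\mathcal{L}^{(d)}$ over $\Div^d_{\HT\leq 1}$ via \cref{prop:symdisohtleq1}, extend by \cref{prop:mappingspaces}, then pass to an Abel--Jacobi map at infinite level. The step you flag as the main obstacle fails as you set it up, because you take the colimit along multiplication by a single fixed section, $x$.

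Let $c_i$ be the coefficient of $x^iy^{d-i}$. By \cref{prop:bcspaces}, only $c_0$ and $c_d$ (the values at $0$ and $\infty$) are analytic coordinates of $\BC(\mathcal{O}(d))$. Multiplication by $x$ sends $(c_0,\dots,c_d)$ to $(0,c_0,\dots,c_d)$. It pushes $c_0$ into a Betti slot but carries the analytic coordinate $c_d$ along unchanged. Hence $\mathrm{colim}_x\BC(\mathcal{O}(d))\cong\C^{\infty,\Betti}\times\A^{1,\an}$, as computed in the proof of \cref{lem:colimcinfty}. The fibre of your $\Div^\infty\to */\C^{\times,\la}$ is therefore $F=(\C^{\infty,\Betti}\times\A^{1,\an})\setminus\{0\}$.

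Write points of $F$ as $(c,t)$ and cover $F$ by $\{c\neq 0\}\cong(\C^\infty\setminus\{0\})^\Betti\times\A^{1,\an}$ and $\{t\neq 0\}\cong\C^{\infty,\Betti}\times\G_m^\an$. Using \cref{lem:picbetti} and the contractibility of $\C^\infty$ and $\C^\infty\setminus\{0\}$, you get $\mathrm{Map}(F,*/\G_m^\an)\simeq\mathrm{Map}(\A^{1,\an},*/\G_m^\an)$. Its $\pi_1$ at the trivial bundle is the group of nowhere-vanishing entire functions, not $\C^\times$. So the fibre is not contractible, and the criterion you invoke (descent along $*\to */\C^{\times,\la}$ reducing to the fibre) already fails at the fibre itself.

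No section of $\mathcal{O}(1)$ vanishes at both $0$ and $\infty$, so every single-section colimit has this defect. Your heuristic that everything beyond constants sits in degree about $2d-2$ is the $\ell$-adic intuition and is false here. The finite-level fibres $\BC(\mathcal{O}(d))\setminus\{0\}$ already carry nonconstant units such as $e^{c_0}$. Neither increasing $d$ nor your fallback (Betti part plus $\G_m^\dagger$-factors) removes a surviving $\A^{1,\an}$-factor.

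\paragraph{The missing idea and smaller points.} Use both $V(x)$ and $V(y)$ as transition maps. Multiplication by $y$ pushes $c_d$ into a Betti slot, so $\mathrm{colim}_{x,y}\BC(\mathcal{O}(d))\cong\C^{\infty,\Betti}$ and $\mathrm{colim}_{x,y}\Div^d\cong(\C^\infty\setminus\{0\})^\Betti/\C^{\times,\la}$ by \cref{lem:colimcinfty}. Now the fibre is purely Betti and contractible, and \cref{lem:picbetti} directly identifies the Picard anima of the colimit with that of $*/\C^{\times,\la}\cong\Pic^1$.

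The paper obtains all $\mathcal{L}^{(d)}$ as restrictions of one map of commutative monoid stacks $\Sym\Div^1\to */\G_m^\an$, via the free-monoid adjunction, rather than your $S_d$-stabiliser descent. It then only needs compatibility with the two transition maps. Your justification of $m^*\mathcal{L}^{(d+e)}\cong\mathcal{L}^{(d)}\boxtimes\mathcal{L}^{(e)}$ does not work as written. The map $m$ does not send $\Div^d_{\HT\leq 1}\times\Div^e_{\HT\leq 1}$ into $\Div^{d+e}_{\HT\leq 1}$, and \cref{prop:mappingspaces} says nothing about products.

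Your endgame (essential surjectivity, automorphisms, $1$-truncatedness) would then be fine. The paper's is shorter: $\can$ is a right inverse of $\AJ^*$, and a $\C$-point section $\Pic^1\to\Div^1$ of $\AJ$ gives a left inverse.

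Finally, all stacks here live over $\C_{\mathrm{gas}}$, so $\Div^1_\C$ is not a base change of $\Div^1_\R$ along $\AnSpec\C\to\AnSpec\R$. The correct input for the reduction is $\Div^1_\R\cong\Div^1_\C/(\Z/2)$ and $\Pic^1_\R\cong\Pic^1_\C/(\Z/2)$.
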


As mentioned earlier, our strategy to prove this result will be the following: First, one formally reduces to the case $E=\C$. For any line bundle $\L$ on $\Div^1$, we will then use the results of the previous section to construct a corresponding line bundle $\L^{(d)}$ on $\Div^d$ roughly given by $\L^{\boxtimes d}$. These line bundles will have the property that they are compatible under pullback along any map $\Div^d\rightarrow \Div^{d+1}$ given by multiplication with a chosen degree $1$ divisor. Thus, the $\L^{(d)}$ will assemble into a line bundle on some big colimit $\colim \Div^d$ and, choosing the degree $1$ divisors inducing the transition maps carefully, we will ensure that
\begin{equation*}
    \colim \Div^d\cong (\C^\infty\setminus \{0\})^\Betti\,/\,\C^{\times, \la}\;,
\end{equation*}
which implies the claim as $\C^\infty\setminus \{0\}$ is contractible.
\begin{lem}
\label{lem:symmetricpowers}
Let $E=\C$. For any $d\geq 1$, there is a canonical map
\begin{equation*}
    \Map(\Div^1, */\G_m^\an)\xrightarrow{\can} \Map(\Div^d, */\G_m^\an)\;.
\end{equation*}
Moreover, for any $\C$-point of $\Div^1$, there is an induced map $\Div^d\rightarrow\Div^{d+1}$ and the composition
\begin{equation*}
    \Map(\Div^1, */\G_m^\an)\xrightarrow{\can}\Map(\Div^{d+1}, */\G_m^\an)\rightarrow \Map(\Div^d, */\G_m^\an)
\end{equation*}
identifies with the canonical map from above.
\end{lem}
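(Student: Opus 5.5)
We define the map $\can$ as a composite through $\Sym^d\Div^1$ and the open locus $\Div^d_{\HT\leq 1}$. Given a line bundle $\L$ on $\Div^1$, the exterior tensor product $\L^{\boxtimes d}$ on $(\Div^1)^d$ carries a canonical $S_d$-equivariant structure given by permuting the factors. Since line bundles form a $1$-truncated anima, such an equivariant line bundle descends to the stacky quotient $(\Div^1)^d/S_d$. We then want to descend it further to the sheaf quotient $\Sym^d\Div^1$.

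Descending to $\Sym^d\Div^1$ is the first step I expect to need care. The issue is whether the stabilisers act trivially on fibres: at a point where some factors coincide, the transposition swapping equal factors acts on $\L_x\tensor\L_x$ by the swap, which is the identity for a line. A cleaner formulation avoids stabilisers altogether. We use that $\Sym^d$ is the animation of the set-valued symmetric power, i.e.\ the free animated commutative monoid construction, and that $*/\G_m^\an$ is an animated abelian group object. Then a map $\Div^1\to */\G_m^\an$ extends uniquely to a monoid map $\Sym\Div^1\to */\G_m^\an$, computed pointwise and then sheafified. Here I would check that $*/\G_m^\an$, i.e.\ $\mathbb{B}\G_m$ valued in $1$-truncated animated abelian groups, really receives this universal property; this is the strictly commutative structure coming from multiplication of units. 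Restricting the extension to degree $d$ yields a line bundle on $\Sym^d\Div^1$, functorially in $\L$.

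Next, we restrict this line bundle to $\Sym^d\Div^1\times_{\Div^d}\Div^d_{\HT\leq 1}$. By \cref{prop:symdisohtleq1}, this fibre product is isomorphic to $\Div^d_{\HT\leq 1}$, so we obtain a line bundle there. \cref{prop:mappingspaces} then identifies $\Map(\Div^d_{\HT\leq1},*/\G_m^\an)$ with $\Map(\Div^d,*/\G_m^\an)$. Composing these three maps defines $\can$.

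For the compatibility, fix a $\C$-point $D_0$ of $\Div^1$. Multiplication by $D_0$ gives a map $m_{D_0}\colon\Div^d\to\Div^{d+1}$, and we must compare $m_{D_0}^*\can_{d+1}(\L)$ with $\can_d(\L)$. On symmetric powers the square formed by $\Sym^d\Div^1\to\Sym^{d+1}\Div^1$ (adding $D_0$) and the two multiplication maps commutes, and the monoid-map description gives
\begin{equation*}
\can_{d+1}(\L)|_{\Sym^d\Div^1+D_0}\cong \can_d(\L)\tensor \L_{D_0}.
\end{equation*}
Here $\L_{D_0}$ is the fibre at $D_0$, a one-dimensional vector space, which is trivialised up to a choice. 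I therefore expect the statement to hold up to a canonical identification, or up to this constant twist, which is harmless for the application.

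The main obstacle is that $m_{D_0}$ need not map $\Div^d_{\HT\leq1}$ into $\Div^{d+1}_{\HT\leq1}$, for instance if $D_0$ sits at $0$ or $\infty$. To handle this, we compare on the open $U\subseteq\Div^d_{\HT\leq 1}$ of divisors that vanish at neither $0$ nor $\infty$. Its image under $m_{D_0}$ lies in $\Div^{d+1}_{\HT\leq1}$, where both line bundles are computed via symmetric powers and hence agree. It then remains to show that restriction from $\Div^d$ to $U$ is fully faithful, or even an equivalence, on Picard anima. I would prove this with the chart analysis of \cref{lem:charts} and the global sections computation in the proof of \cref{prop:mappingspaces}: the line bundles are determined by monodromies on $\C^{\times,\Betti}$-factors, and $U$ has enough of these to detect them. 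If that argument fails, the fallback is to choose $D_0$ away from $0$ and $\infty$, which suffices for the colimit construction.
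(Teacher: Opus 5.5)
Your construction of $\mathrm{can}$ is exactly the paper's. Because $*/\G_m^\an$ is an animated abelian group, $\mathcal{L}$ extends to a monoid map $\Sym\Div^1\to */\G_m^\an$. One restricts this to $\Div^d_{\HT\leq 1}$ via \cref{prop:symdisohtleq1} and extends it to $\Div^d$ via \cref{prop:mappingspaces}. The paper then settles the compatibility in one sentence: it holds ``by construction'', since everything comes from one monoid map.

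Both of your caveats are sharper than this and are correct.
\begin{itemize}
\item The twist is genuinely needed. These Picard anima all have $\pi_1=\C^\times$ (global units). On $\pi_1$, $m_{D_0}^*\circ\mathrm{can}_{d+1}$ induces $\lambda\mapsto\lambda^{d+1}$, while $\mathrm{can}_d$ induces $\lambda\mapsto\lambda^{d}$. So the two maps of anima agree only after tensoring with the line $\mathcal{L}_{D_0}$. This is harmless for \cref{thm:main} once you renormalise the terms of the limit.
\item The transition maps actually used in \cref{lem:colimcinfty} are multiplication by $V(x)$ and $V(y)$, which sit at $0$ and $\infty$. They therefore do not send $\Div^d_{\HT\leq 1}$ into $\Div^{d+1}_{\HT\leq 1}$, and ``by construction'' does not literally apply.
\end{itemize}

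The gap is in how you propose to get around the second point.

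First, restriction from $\Div^d$ to $U=\{c_0\neq 0\neq c_d\}\cong\C^{\times,\Betti}\times\C^{d-1,\Betti}$ is not even injective on isomorphism classes. By \cref{lem:picbetti}, a line bundle on $U$ only records a monodromy in $\C^\times$. A line bundle on $\Div^d$ also carries the weights $s_0,s_d\in\C$ of the $\G_m^\dagger$-stabilisers along $\{c_0=0\}$ and $\{c_d=0\}$; compare the chart description in the proof of \cref{lem:linebundlesextend}. The monodromy only remembers $e^{2\pi i s_0}=e^{2\pi i s_d}$. Concretely, take the line bundle $(\mathcal{L},s)\mapsto\mathcal{L}|_\infty$. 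Its section $s(\infty)=c_d$ trivialises it on $U$. It is nontrivial on $\Div^d$, because the stabiliser $\G_m^\dagger$ of a point with $c_d=0$ acts on its fibre with weight one.

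Second, the analogous bundle at $0$, with section $c_0$, shows that no locus on which $c_0$ is invertible can detect line bundles on $\Div^d$. This includes $\Div^d_{\HT\leq 1}\cap x^{-1}(\Div^{d+1}_{\HT\leq 1})$. So for $D_0=V(x)$ you must control $x^*\mathrm{can}_{d+1}(\mathcal{L})$ exactly near the divisors through $0$, and $x$ sends those outside $\Div^{d+1}_{\HT\leq 1}$.

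Third, your fallback does not rescue the application. For $D_0=V(ax+by)$ with $ab\neq 0$, multiplication by $D_0$ only rescales the analytic coordinates $c_0$ and $c_d$. The colimit then keeps both $\A^{1,\an}$-factors and is not $(\C^\infty\setminus\{0\})^\Betti/\C^{\times,\la}$. \cref{lem:colimcinfty} needs precisely $x$ and $y$, because they push $c_0$, respectively $c_d$, into a Betti slot.

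One possible repair is to compare the weights directly. By the chart description, $s_0$ and $s_d$ should determine a line bundle on $\Div^d$ up to isomorphism. They can be read off at points of $\Div^{d+1}_{\HT\leq 1}$, where $\mathrm{can}_{d+1}(\mathcal{L})$ is given by the monoid map. And $x$ maps the $\G_m^\dagger$-stabiliser of a generic point of $\{c_0=0\}$ (respectively $\{c_d=0\}$) isomorphically onto the corresponding stabiliser factor in $\Div^{d+1}$.
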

\begin{proof}
As $\Sym \Div^1\coloneqq \bigsqcup_{d\geq 0} \Sym^d \Div^1$ is the free commutative monoid stack on $\Div^1$, we have
\begin{equation*}
    \Map(\Div^1, */\G_m^\an)\cong \Map_{\mathrm{MonStk}}(\Sym \Div^1, */\G_m^\an)
\end{equation*}
by adjunction. In particular, precomposing with the inclusion 
\begin{equation*}
    \Div^d_{\HT\leq 1}\hookrightarrow \Sym^d \Div^1\hookrightarrow\Sym \Div^1
\end{equation*}
from \cref{prop:symdisohtleq1} yields a canonical map
\begin{equation*}
    \Map(\Div^1, */\G_m^\an)\rightarrow \Map(\Div^d_{\HT\leq 1}, */\G_m^\an)\cong \Map(\Div^d, */\G_m^\an)\;,
\end{equation*}
where the last isomorphism is due to \cref{prop:mappingspaces}. The claimed compatibility follows by construction as the maps $\Div^d\rightarrow */\G_m^\an$ we obtain arise from a map of commutative monoid stacks $\Sym \Div^1\rightarrow */\G_m^\an$.
\end{proof}

We stay in the case $E=\C$ and consider the two degree $1$ divisors $V(x)$ and $V(y)$ on $\P^1_\C$. These induce transition maps
\begin{equation*}
    x, y: \Div^d\rightarrow \Div^{d+1}
\end{equation*}
for any $d\geq 1$. 

\begin{lem}
\label{lem:colimcinfty}
Let $E=\C$. Then there is an isomorphism
\begin{equation*}
    \colim_{x, y} \Div^d\cong (\C^\infty\setminus \{0\})^\Betti\,/\,\C^{\times, \la}\;.
\end{equation*}
\end{lem}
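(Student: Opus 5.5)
We want to identify the colimit of $\Div^d$ along the transition maps $x,y:\Div^d\to\Div^{d+1}$. My plan is to work with the presentation $\Div^d\cong(\BC(\O(d))\setminus\{0\})/\C^{\times,\la}$ from \cref{prop:divd}. By \cref{prop:bcspaces},
\begin{equation*}
    \BC(\O(d))\cong \A^{2,\an}\times_{(\C\times\C)^\Betti} H^0(\P^1_\C,\O(d))^\Betti .
\end{equation*}
Here the $\A^{2,\an}$-factor records the ``overconvergent'' coordinates $c_0$ and $c_d$, that is, the values at $0$ and $\infty$.

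The first step is to compute the transition maps in coordinates. Multiplication by $x$ sends $\sum_i c_ix^iy^{d-i}$ to $\sum_i c_ix^{i+1}y^{d-i}$, so the coefficient vector $(c_0,\dots,c_d)$ becomes $(0,c_0,\dots,c_d)$. The old analytic coordinate $c_0$ thus becomes a Betti coordinate $c_1$. Similarly, multiplication by $y$ sends the vector to $(c_0,\dots,c_d,0)$, so $c_d$ becomes a Betti coordinate. These maps are $\C^{\times,\la}$-equivariant, because the section being multiplied by is the fixed $\C$-point $x$ (resp.\ $y$) with trivial scaling.

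The second step is the cofinality argument. I arrange the diagram as the filtered colimit over pairs $(m,n)$, indexing divisors of degree $d=m+n+1$ with transition maps $x$ in $m$ and $y$ in $n$. This needs $x$ and $y$ to commute, which holds since multiplication by $xy$ is symmetric. The cleanest version is a colimit along the diagonal sequence $\Div^d\xrightarrow{xy}\Div^{d+2}$, which should be cofinal. Along it, each analytic coordinate $c_0$, $c_d$ is eventually pushed into the interior Betti coordinates. Since $\BC(\O(d))\setminus\{0\}$ sits between $H^0(\O(d))^\Betti\setminus\{0\}$-type pieces and the next stage, the transition maps factor as
\begin{equation*}
    \BC(\O(d))\setminus\{0\}\to (\C^{d+1}\setminus\{0\})^\Betti\to \BC(\O(d+2))\setminus\{0\},
\end{equation*}
where the first map is $\BC\to\BC^{\an,\dR}$ and the second includes the Betti coordinates as interior coefficients. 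Interleaving these factorizations shows that the colimit agrees with
\begin{equation*}
    \colim_d (\C^{d+1}\setminus\{0\})^\Betti = (\C^\infty\setminus\{0\})^\Betti .
\end{equation*}
This uses that $(-)^\Betti$ commutes with colimits, with $\C^\infty\setminus\{0\}$ understood as the sequential colimit of condensed sets. It also uses that the inclusion at the Betti level is compatible with the zero-removal; this is fine because ``nonzero'' is checked on $\C$-points. Quotienting by $\C^{\times,\la}$ then commutes with the colimit, since colimits of stacks commute with quotients by a fixed group acting compatibly.

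The main obstacle is verifying the factorization through the de Rham stack. One must check that the second map $(\C^{d+1}\setminus\{0\})^\Betti\to\BC(\O(d+2))\setminus\{0\}$, sending $(c_0,\dots,c_d)$ to $(0,c_0,\dots,c_d,0)$, is well defined. It is, because the new endpoint coordinates are $0$, which lifts canonically to $\A^{2,\an}$ over $(\C\times\C)^\Betti$. One must also check that both composites recover the transition maps and the Betti inclusions. The composite $\BC(\O(d))\to\BC(\O(d+2))$ equals multiplication by $xy$ because the analytic data of $c_0,c_d$ lands in interior coordinates, where only the reduction $\ol{A}$ is remembered. The other composite is the evident inclusion $(\C^{d+1}\setminus\{0\})^\Betti\to(\C^{d+3}\setminus\{0\})^\Betti$. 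The remaining care is checking these identities as maps of stacks, naturally in strongly totally disconnected $A$, rather than merely on points.
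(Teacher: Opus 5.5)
Your proposal is correct and is essentially the paper's argument. Both read off from \cref{prop:bcspaces} that multiplication by $x$ (resp.\ $y$) turns the overconvergent coordinate $c_0$ (resp.\ $c_d$) into a Betti interior coordinate, so the colimit of the $\BC(\mathcal{O}(d))$ is $\C^{\infty,\Betti}$, after which one removes $0$ and quotients by $\C^{\times,\la}$. The difference is only bookkeeping: the paper first takes the colimit along $x$ (obtaining $\C^{\infty,\Betti}\times\A^{1,\an}$) and then along $y$, whereas you pass to the cofinal diagonal $xy$ and spell out the interleaving through $(\C^{d+1}\setminus\{0\})^\Betti$, together with the zero-removal and the quotient, which the paper leaves implicit (this works equivariantly since your intermediate maps are visibly $\C^{\times,\la}$-equivariant).
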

\begin{proof}
We note that $x$ and $y$ even induce maps
\begin{equation*}
    x, y: \BC(\O(d))\rightarrow\BC(\O(d+1))
\end{equation*}
for $d\geq 1$ and hence \ref{prop:divd} reduces us to showing that
\begin{equation}
\label{eq:aj-colimcinfty}
    \colim_{x, y} \BC(\O(d))\cong \C^{\infty, \Betti}\;.
\end{equation}
By \cref{prop:bcspaces}, the $x$ maps identify with the maps
\begin{equation*}
    \A^{1, \an}\times \C^{d-1, \Betti}\times \A^{1, \an}\rightarrow \A^{1, \an}\times \C^{d, \Betti}\times \A^{1, \an}
\end{equation*}
given by the identity on the last factor and the right shift on the first $d$ factors; in particular, on the factor $\A^{1, \an}$, the map $x$ is given by the canonical projection $\A^{1, \an}\rightarrow \C^\Betti$. Thus, we conclude that
\begin{equation*}
    \colim_x \BC(\O(d))\cong \C^{\infty, \Betti}\times \A^{1, \an}
\end{equation*}
and it remains to take the colimit along the $y$-maps. However, the map
\begin{equation*}
    y: \C^{\infty, \Betti}\times \A^{1, \an}\rightarrow \C^{\infty, \Betti}\times \A^{1, \an}
\end{equation*}
is now given by the left shift $\C^{\infty, \Betti}\times\A^{1, \an}\rightarrow \C^{\infty, \Betti}$, from which we conclude the isomorphism (\ref{eq:aj-colimcinfty}), as desired.
\end{proof}

Finally, we can put all the ingredients together and prove \cref{thm:main}.

\begin{proof}[Proof of \cref{thm:main}]
We first note that we may reduce to the case $E=\C$. Indeed, this follows from the fact that $\Div^1_\R\cong \Div^1_\C/(\Z/2)$ and $\Pic^1_\R\cong \Pic^1_\C/(\Z/2)$, see \cite[Prop.\ VI.4.2]{RealLLC}.

In the case $E=\C$, we note that \cref{lem:symmetricpowers} and \cref{lem:colimcinfty} produce a map
\begin{equation*}
\begin{split}
    \Map(\Div^1, */\G_m^\an)\xrightarrow{\can} \lim_{x, y} \Map(\Div^d, */\G_m^\an)&\cong \Map((\C^\infty\setminus \{0\})^\Betti/\C^{\times, \la}, */\G_m^\an) \\
    &\cong \Map(*/\C^{\times, \la}, */\G_m^\an)\;,
\end{split}
\end{equation*}
where the last isomorphism is due to \cref{lem:picbetti}. However, note that this map has a retraction induced by
\begin{equation*}
    \lim_{x, y}\Map(\Div^d, */\G_m^\an)\rightarrow \Map(\Div^1, */\G_m^\an)
\end{equation*}
and, under the isomorphism $\Pic^1\cong */\C^{\times, \la}$ from \cref{prop:pic}, this retraction identifies with precomposition with the Abel--Jacobi map. In other words, we conclude that the map
\begin{equation}
\label{eq:aj-precompaj}
    \Map(\Pic^1, */\G_m^\an)\rightarrow \Map(\Div^1, */\G_m^\an)
\end{equation}
induced by the Abel--Jacobi map is the projection onto a direct summand. However, picking any $\C$-point of $\BC(\O(1))\setminus \{0\}$, e.g.\ the one given by the function $x$ on $\P^1_\C$, we obtain a map
\begin{equation*}
    \Pic^1\cong */\C^{\times, \la}\rightarrow \BC(\O(1))\setminus \{0\}\,/\,\C^{\times, \la}\cong \Div^1
\end{equation*}
precomposition with which yields a retraction of (\ref{eq:aj-precompaj}). Thus, we deduce that (\ref{eq:aj-precompaj}) must be an isomorphism, which is what we wanted to prove.
\end{proof}

Finally, let us explain how to deduce local class field theory for $E$, i.e. that $W_E^\ab\cong E^\times$.

\begin{proof}[Proof of \cref{cor:intro-cft}]
By Pontryagin duality, it suffices to show that any continuous complex character of $W_E$ factors through $E^\times$. To see this, note that the presentation
\begin{equation*}
    \Div^1_E\cong (\A^{2, \an}\setminus \{0\})\,/\,W_E^\la
\end{equation*}
from \cite[Cor.\ VI.4.5]{RealLLC} shows that any such character gives rise to a line bundle on $\Div^1_E$ using \cref{Findimrep}. However, by \cref{thm:main}, any such line bundle is pulled back from $\Pic^1_E\cong */E^{\times, \la}$ and line bundles on this stack are characters of $E^\times$ by another application of \cref{Findimrep}.
\end{proof}

\bibliographystyle{alpha}
\bibliography{References}
\end{document}